\DeclareMathOperator{\supp}{supp}
\DeclareMathOperator{\diam}{diam}
\DeclareMathOperator{\vol}{vol}
\DeclareMathOperator{\alg}{alg}
\DeclareMathOperator{\red}{red}
\DeclareMathOperator{\loc}{loc}
\DeclareMathOperator{\Av}{Av}
\DeclareMathOperator{\ind}{index}
\DeclareMathOperator{\End}{End}
\DeclareMathOperator{\U}{U}
\DeclareMathOperator{\Spin}{Spin}
\DeclareMathOperator{\Pen}{Pen}
\DeclareMathOperator{\pt}{pt}
\DeclareMathOperator{\restr}{restr}
\newcommand{\beq}[1]{\begin{equation} \label{#1}}
\newcommand{\eeq}{\end{equation}}
\newcommand{\bea}{\begin{eqnarray}}
\newcommand{\eea}{\end{eqnarray}}
\begin{document}

\theoremstyle{plain}
\newtheorem{theorem}{Theorem}[section]
\newtheorem{thm}{Theorem}[section]
\newtheorem{lemma}[theorem]{Lemma}
\newtheorem{proposition}[theorem]{Proposition}
\newtheorem{prop}[theorem]{Proposition}
\newtheorem{corollary}[theorem]{Corollary}
\newtheorem{conjecture}[theorem]{Conjecture}

\theoremstyle{definition}
\newtheorem{definition}[theorem]{Definition}
\newtheorem{defn}[theorem]{Definition}
\newtheorem{example}[theorem]{Example}
\newtheorem{remark}[theorem]{Remark}
\newtheorem{rem}[theorem]{Remark}

\newcommand{\C}{\mathbb{C}}
\newcommand{\R}{\mathbb{R}}
\newcommand{\Z}{\mathbb{Z}}
\newcommand{\N}{\mathbb{N}}

\newcommand{\Supp}{{\rm Supp}}

\newcommand{\field}[1]{\mathbb{#1}}
\newcommand{\bZ}{\field{Z}}
\newcommand{\bR}{\field{R}}
\newcommand{\bC}{\field{C}}
\newcommand{\bN}{\field{N}}
\newcommand{\bT}{\field{T}}
\newcommand{\cB}{{\mathcal{B} }}
\newcommand{\cK}{{\mathcal{K} }}
\newcommand{\cF}{{\mathcal{F} }}
\newcommand{\cO}{{\mathcal{O} }}
\newcommand{\cE}{\mathcal{E}}
\newcommand{\cS}{\mathcal{S}}
\newcommand{\calL}{\mathcal{L}}

\newcommand{\HH}{{\mathcal{H} }}
\newcommand{\tilH}{\widetilde{\HH}}
\newcommand{\HX}{\HH_X}
\newcommand{\Hpi}{\HH_{\pi}}
\newcommand{\HHpi}{\HH \otimes \HH_{\pi}}
\newcommand{\Ltwopi}{L^2_{\pi}(X, \HHpi)}

\newcommand{\KK}{K\!K}

\newcommand{\D}{D \hspace{-0.27cm }\slash}
\newcommand{\Dsmall}{D \hspace{-0.19cm }\slash}

\newcommand{\mybigwedge}{\textstyle{\bigwedge}}

\newcommand{\CGmax}{C^*_{G, \max}}
\newcommand{\DGmax}{D^*_{G, \max}}
\newcommand{\CGred}{C^*_{G, \red}}
\newcommand{\CGalg}{C^*_{G, \alg}}
\newcommand{\DGalg}{D^*_{G, \alg}}
\newcommand{\CGker}{C^*_{G, \ker}}
\newcommand{\Cmax}{C^*_{\max}}
\newcommand{\Dmax}{D^*_{\max}}
\newcommand{\Cred}{C^*_{\red}}
\newcommand{\Calg}{C^*_{\alg}}
\newcommand{\Dalg}{D^*_{\alg}}
\newcommand{\Cker}{C^*_{\ker}}
\newcommand{\tilCalg}{\widetilde{C}^*_{\alg}}
\newcommand{\Cpiker}{C^*_{\pi, \ker}}
\newcommand{\Cpialg}{C^*_{\pi, \alg}}
\newcommand{\Cpimax}{C^*_{\pi, \max}}

\newcommand{\one}{\mathbbm{1}}

\newcommand{\Avpi}{\Av^{\pi}}

\newcommand{\Gtc}{\Gamma^{\infty}_{tc}}
\newcommand{\tilD}{\widetilde{D}}
\newcommand{\XoneH}{X^{\HH}_1}

\newcommand{\XX}{\mathfrak{X}}

\def\kt{\mathfrak{t}}
\def\kk{\mathfrak{k}}
\def\kp{\mathfrak{p}}
\def\kg{\mathfrak{g}}
\def\kh{\mathfrak{h}}

\newcommand{\pilamrho}{[\pi_{\lambda+\rho}]}

\newcommand{\Trestr}{\mathcal{T}_{\restr}}
\newcommand{\TdN}{\mathcal{T}_{d_N}}

\newcommand{\omG}{\om/\hspace{-1mm}/G}
\newcommand{\om}{\omega} \newcommand{\Om}{\Omega}

\newcommand{\QcwR}{quantization commutes with reduction}

\newcommand{\Spinc}{\Spin^c}

\def\kt{\mathfrak{t}}
\def\kk{\mathfrak{k}}
\def\kp{\mathfrak{p}}
\def\kg{\mathfrak{g}}
\def\kh{\mathfrak{h}}

\newcommand{\ddt}{\left. \frac{d}{dt}\right|_{t=0}}

\newenvironment{proofof}[1]
{\noindent \emph{Proof of #1.}}{\hfill $\square$}

%\newcommand{\Todo}{\textbf{To do}}

%\title{Equivariant Callias-type operators and the integration trace}
\title{Equivariant Callias index theory via coarse geometry}
\author{Hao Guo,\footnote{Texas A\&M University, \texttt{haoguo@math.tamu.edu}}
{}
Peter Hochs\footnote{University of Adelaide, \texttt{peter.hochs@adelaide.edu.au}}
 {} 
 and Varghese Mathai\footnote{University of Adelaide, \texttt{mathai.varghese@adelaide.edu.au}}}

\maketitle

\begin{center}
{\it Dedicated to the memory of John Roe}
\end{center}

\begin{abstract}
The equivariant coarse index is well-understood and widely used for actions by discrete groups. We extend the definition of this index to general locally compact groups. We use a suitable notion of admissible modules over $C^*$-algebras of continuous functions  to obtain a meaningful index. Inspired by work by Roe, we then develop a localised variant, with values in the $K$-theory of a group $C^*$-algebra. This generalises the Baum--Connes assembly map to non-cocompact actions. We show that an equivariant index for Callias-type operators is a special case of this localised index, obtain results on existence and non-existence of Riemannian metrics of positive scalar curvature invariant under proper group actions, and show that a localised version of the Baum--Connes conjecture is weaker than the original conjecture, while still giving a conceptual description of the $K$-theory of a group $C^*$-algebra.  
\end{abstract}

\tableofcontents

\section{Introduction}

\subsection*{Background}

\emph{Coarse geometry} is the study of large-scale structures of metric spaces. Important invariants in this area are various versions of the \emph{Roe algebra} and their $K$-theory groups. The \emph{coarse index}, with values in such $K$-theory groups, is a powerful tool that has been studied and applied by many authors. A standard introduction is \cite{Roe96}. A central problem is the coarse Baum--Connes conjecture \cite{Roe93}, which has a range of important consequences.
Important areas of applications of coarse index theory are obstructions to Riemannian metrics of positive scalar curvature (see \cite{Schick14} for a survey) and the Novikov conjecture (see for example \cite{Yu98, Yu00}). 

Equivariant versions of the Roe algebra and the coarse index have been developed for proper actions by discrete groups. Another refinement is a localised variant of the coarse index developed by Roe \cite{Roe16}, generalising an index defined by Gromov and Lawson \cite{Gromov83}. This localised coarse index applies, in a certain precise sense, to operators that are invertible outside a given subset of the space. For actions by discrete groups, an equivariant version of this localised index theory in terms of coarse $K$-homology was recently developed by Bunke and Engel \cite{Bunke18}.

 Equivariant coarse index theory for general locally compact groups would be useful in the study of such groups and their actions. In particular, a localised approach to this index, which takes values in $K$-theory of the group $C^*$-algebra, offers greater flexibility compared to the standard equivariant index for actions with compact quotients \cite{Connes94}. However, the topology of a non-discrete group poses some technical challenges in the development of equivariant coarse index theory for such groups.

\subsection*{Results}

Our main goal in this paper is to develop equivariant coarse index theory for proper actions by general locally compact groups $G$, and in particular a localised version with values in $K_*(C^*_{\red}(G))$. Here $C^*_{\red}(G)$ is the reduced group $C^*$-algebra of $G$. Our secondary goal is to demonstrate the usefulness of this theory by showing how it simultaneously generalises other versions of index theory, and by obtaining applications to Riemannian metrics of positive scalar curvature invariant under proper actions. Our main results are:
\begin{enumerate}
\item constructions of equivariant Roe algebras and an equivariant coarse index, and in particular localised versions of these objects (Definitions \ref{def Roe alg}, \ref{def loc Roe}, \ref{def coarse index ell} and \ref{def loc index ell});
\item a proof that the analytic assembly map \cite{Connes94} and the equivariant index of Callias-type operators in \cite{Guo18} are special cases of the localised equivariant coarse index (Theorem \ref{thm Callias} and Corollary \ref{cor loc index ass map});
\item obstructions to and existence of Riemannian metrics with positive scalar curvature, invariant under proper group actions (Proposition \ref{prop psc} and Theorem \ref{thm:induction});
\item the formulation of a localised version of the surjectivity part of the Baum--Connes conjecture \cite{Connes94}, and relations between the localised and original conjectures (Conjecture \ref{conj LBCC} and Proposition \ref{prop LBCC}).
\end{enumerate}
In forthcoming work \cite{GHM2},  we will give further applications of the index theory we develop in this paper, showing that it refines  the indices in \cite{Braverman14, Mathai13}, and obtaining an application to the quantisation commutes with reduction problem.

\subsection*{The localised equivariant coarse index}

Let $M$ be a complete Riemannian manifold, on which a locally compact group $G$ acts properly and isometrically.
Let $D$ be an elliptic differential operator on $M$. For the definition of the localised index, we assume that there is a $G$-invariant subset $Z \subset M$ such that $D^2$ has a uniform positive lower bound outside $Z$. Then we obtain the localised equivariant coarse index
\[
\ind_G^Z(D) \in K_*(C^*(Z)^G).
\]
(See Definition \ref{def loc index ell}.) Here $C^*(Z)^G$ is the equivariant Roe algebra of $Z$. We are particularly interested in the case where $Z/G$ is compact, so that $C^*(Z)^G$ is stably isomorphic to $C^*_{\red}(G)$. While there is no technical reason a priori to restrict to the case where $Z/G$ is compact, this special case is interesting for several reasons, namely, in this case: 
\begin{enumerate}
\item the localised equivariant coarse index and the $K$-theory group it lies in are independent of $Z$;
\item the receptacle of the index, namely $K_*(C^*_{\red}(G))$, is a rich and relevant object (in particular, nonzero); there exist many tools to extract information from it, such as traces and higher cyclic cohomology classes on (smooth subalgebras of) $C^*_{\red}(G)$; 
\item various existing indices, including the analytic assembly map, are special cases, as we discuss below.
\end{enumerate}

Operators $D$ to which the localised equivariant index applies include the following three important special cases:
\begin{enumerate}
\item Callias-type operators of the form $D = \tilde D + \Phi$, where $\tilde D$ is a Dirac operator and $\Phi$ is a vector bundle endomorphism making $D^2$ uniformly positive outside $Z$. The study of these operators, their indices and their applications was initiated by Callias \cite{Callias78}, and extended in various directions by many authors, see e.g.\ \cite{Anghel89,Anghel93, Bott78, Braverman18, Bruening92, Bunke95, Carvalho14, Cecchini16, Kottke11, Kottke15, Kucerovsky01, Wimmer14}. The equivariant case for proper actions was treated in \cite{Guo18};
\item Dirac operators $D$ whose curvature term $R$ in the Weitzenb\"ock formula $D^2 = \Delta + R$ is uniformly positive outside $Z$ \cite{Gromov83, Roe16};
\item Dirac operators $D$ on manifolds with boundary that are invertible on the boundary, extended to cylinders attached to these boundaries.
\end{enumerate}
For Callias-type operators, the coarse-geometric approach in this paper may already be useful in the case of trivial groups. In addition, we can now consider the lift of a (non-equivariant) Callias-type operator on a manifold $M$ to the universal cover of $M$, and obtain an equivariant index in the $K$-theory of the fundamental group of $M$. This is a more refined invariant than the Fredholm index of the initial operator. 

In the case where $Z/G$ is compact, the localised equivariant index generalises the Baum--Connes assembly map from the case of actions with compact quotients to the cases above. This allows us to formulate a localised version of the surjectivity direction of the Baum--Connes conjecture. We will show that this localised surjectivity is implied by standard Baum--Connes surjectivity.

One of the technical challenges in constructing a meaningful index in this context is to develop the appropriate notion of an \emph{admissible module}. For actions by discrete groups, this was done in Definition 2.2 in \cite{Yu10}. The Roe algebra of a metric space $X$ acted on by a locally compact group $G$ is defined in terms of operators a on Hilbert space $H_X$ with compatible actions by $C_0(X)$ and $G$. The resulting algebra should ideally be independent of the choice of $H_X$, and its $K$-theory should contain relevant information about $G$, and possibly also $X$. (A natural initial choice would be $H_X = L^2(X)$ for a Borel measure on $X$, but this does not contain enough information  if, for example, $X$ is a point or if $G$ is compact and acts trivially on $X$.) We achieve these two things by taking $H_X$ to be an admissible module, in the sense that we define in Sections \ref{sec prelim} and \ref{sec proofs Roe}. We indicate how Roe algebras and the associated $K$-theory groups and indices
 defined in terms of admissible modules on the one hand, and more geometric, but non-admissible modules on the other, are related in Subsection \ref{sec non adm}.
 
\subsection*{Outline of this paper}

We introduce admissible modules and the associated Roe algebras in Section \ref{sec prelim}. We use these notions in Section \ref{sec index} to define the equivariant coarse index and its localised version. In Section \ref{sec results} we apply these notions to show that the equivariant Callias-type index in \cite{Guo18} is a special case, and we establish results on positive scalar curvature, as well as state a localised Baum--Connes conjecture. Proofs of the properties of admissible modules and Roe algebras from Section \ref{sec prelim} are given in Section \ref{sec proofs Roe}. Proofs of the results in Section \ref{sec results} are given in Sections \ref{sec pf Callias} and \ref{sec appl}.

%\Todo: special cases of operators invertible at infinity, localised BCC, More powerful approach to Callias index theory, admissible modules and other technical challenges.

% Say: 
% we lay the foundation for the equivar coarse index for non-discrete groups (nontrivial; need admissible modules)

% specialise to the most/ an interesting version: localised index. Lands in an interesting K-theory group

% Demonstrate relevante by showing that Callias index is a special case, getting obstructions to psc, and a localised BCC
% Gives more powerful approach to Callias index theory

% Localised BCC is related to the question is the equivar index we construct lands in image of BC map. In particular, for Callias ops, ops on mfds with bdry, and Dirac ops with posoitive curvature at infinity.

% Mention future work? (invar index, [Q,R0=0], partitioned mfd index thm)

\subsection*{Acknowledgements}

The authors are grateful to Rufus Willett, Zhizhang Xie, and Guoliang Yu for their helpful advice. Varghese Mathai was supported by funding from the Australian Research Council, through the Australian Laureate Fellowship FL170100020. Hao Guo was supported in part by funding from the National Science Foundation under Grant No. 1564398.

\section{Equivariant Roe algebras and admissible modules} \label{sec prelim}

A key idea in this paper is to use coarse geometry and Roe algebras to construct a localised equivariant index for proper actions with values in the $K$-theory of a group $C^*$-algebra. We start by discussing the necessary background in coarse geometry. Much of the material in this section is well-known in the case of discrete groups, but we will see that the generalisation to general locally compact groups requires some work.

Throughout this section, $(X,d)$ will denote a proper metric space, i.e.\ a metric space in which closed balls are compact, and $G$ a locally compact group acting properly and isometrically on $X$. We assume $G$ to be unimodular and fix a Haar measure $dg$ on $G$. Throughout this paper, we will use left and right invariance of $dg$ without mentioning this explicitly, in arguments involving substitutions in integrals over $G$.
(The contents of this paper can likely be generalised to non-unimodular group if the modular function is inserted where appropriate.)
We will sometimes assume $X/G$ to be compact, but not always. We always view $L^2(G)$ as a unitary representation of $G$ via the left-regular representation. 

The two properties of the modules and algebras we define here that are most important to the construction of the equivariant localised coarse index in Section \ref{sec index} are Theorems \ref{thm exist adm} and \ref{thm Roe group Cstar}. These are proved in Subsections \ref{sec pf thm exist} and \ref{sec kernels}, respectively.

\subsection{Admissible modules} \label{sec adm}

We will construct the reduced and maximal equivariant Roe algebras of $X$ in terms of \emph{admissible $C_0(X)$-modules}, and a particularly useful type of such modules we call \emph{geometric admissible modules}. Using admissible modules ensures that the algebras constructed are independent of the choice of module. Their purpose is also to ensure that the Roe algebras we use contain sufficient information about the group $G$, as illustrated in Examples \ref{ex non adm} and \ref{ex ind non adm}. Admissible modules were first defined in the case of discrete groups by Guoliang Yu in Definition 2.2 in \cite{Yu10}. For non-discrete $G$, the definition needs to take into account the topology of $G$.

Admissible modules are special cases of ample, equivariant $C_0(X)$-modules.
\begin{definition} \label{def C0X module}
An \emph{equivariant $C_0(X)$-module} is a Hilbert space $H_X$ with a unitary representation of $G$, together with a $*$-homomorphism
\[
\pi\colon C_0(X) \to \cB(H_X)
\]
such that for all $g \in G$ and $f \in C_0(X)$,
\[
\pi(g\cdot f) = g \circ \pi(f) \circ  g^{-1}.
\]
Here $g\cdot f$ is the function mapping $x \in X$ to $f(g^{-1}x)$.

An equivariant $C_0(X)$-module is \emph{nondegenerate} if $\pi(C_0(X))H_X$ is dense in $H_X$. It is \emph{standard} if $\pi(f)$ is a compact operator only if $f =0$. The module is \emph{ample} if it is nondegenerate and standard.
\end{definition}
We will usually omit the homomorphism $\pi$ from the notation, and write $f \cdot \xi := \pi(f)\xi$ for $f \in C_0(X)$ and $\xi \in H_X$.

\begin{example}
The space $H_G=L^2(G)\otimes H$, for a separable infinite-dimensional Hilbert space $H$ equipped with the trivial $G$-representation, is an ample equivariant $C_0(G)$-module with respect to the multiplicative action of $C_0(G)$ on $L^2(G)$.
\end{example}

The action by $C_0(X)$ on any ample, equivariant $C_0(X)$-module $H_X$ has a unique extension to an action by the algebra $L^{\infty}(X)$ of bounded Borel functions, characterised by the property that for a uniformly bounded sequence in $L^{\infty}(X)$ converging pointwise, the corresponding operators on $H_X$ converge strongly. All functions we will apply this extension to are bounded, continuous functions on closed sets in $X$, such as the indicator function $\one_{Y}$ of a closed subset $Y \subset X$.

\begin{definition}
	Let $H_X$ be an ample $C_0(X)$-module. An operator $T \in \cB(H_X)$ has \emph{finite propagation} if there is an $r>0$ such that for all $f_1, f_2 \in C_0(X)$ whose supports are further than $r$ apart, we have $f_1 T f_2 = 0$. 
	
	An operator $T \in \cB(H_X)$ is \emph{locally compact} if for all $f \in C_0(X)$, the operators $fT$ and $Tf$ are compact.
\end{definition}

We now formulate the general notion of admissible module over a space.
\begin{definition}
\label{def adm mod}
Let $G$ is a locally compact group. Let $H_X$ be an ample, equivariant $C_0(X)$-module. If $X/G$ is compact, $H_X$ is said to be \emph{admissible} if there is a $G$-equivariant unitary isomorphism
\[
\Psi:L^2(G) \otimes H\cong H_X ,
\]
for a separable infinite-dimensional Hilbert space $H$ equipped with the trivial $G$-representation, such that for any bounded, $G$-equivariant operator $T$ on $H_X$,
\begin{enumerate}
	\item $T$ has finite propagation with respect to the action by $C_0(X)$ if and only if $\Psi^{-1}\circ T \circ \Psi$ has finite propagation with respect to the action by $C_0(G)$;
	\item $T$ is locally compact with respect to the action by $C_0(X)$ if and only if $\Psi^{-1}\circ T \circ \Psi$ is locally compact (with respect to the action by $C_0(G)$.
\end{enumerate}

If $X/G$ is not necessarily compact, then an ample, equivariant $C_0(X)$-module is admissible if for every closed, $G$-invariant subset $Y \subset X$ such that $Y/G$ is compact, $\one_Y H_X$ is an admissible module over $C_0(Y)$.
\end{definition}
\begin{remark}
If $G=\Gamma$ is a discrete group, Definition \ref{def adm mod} is an alternative to the definition of admissible modules given in  the conditions in Definition \ref{def adm mod} are implied by the definition of admissible modules given in \cite{Yu10}. In particular, if $H_X$ is an admissible module in the sense of Definition 2.2 in \cite{Yu10}, then it is also admissible in our sense.  
\end{remark}
\begin{remark}
The idea behind admissible modules is to provide a class of general spaces on which to carry out analysis of operators on $X$, but which also allow us to define an equivariant index of these operators. For example, we will use Conditions 1 and 2 of Definition \ref{def adm mod} in an essential way\footnote{See the proof of Proposition \ref{prop dense kernels}.} to define the equivariant coarse index and the localised equivariant coarse index in $K_*(C^*(G))$ (see Subsections \ref{sec ind maps cocpt} and \ref{sec ind maps noncocpt}).
\end{remark}
In this paper, we will mostly work with a particular geometric type of admissible module.

Consider a covering of $X$ by sets of the form $G \times_{K_j} Y_j$, for compact subgroups $K_j < G$ and compact, $K_j$-invariant slices $Y_j \subset X$. Suppose that the intersections between these sets have measure zero. For each $j$, fix a $K$-invariant measure $dy_j$ on $Y_j$. Together with the Haar measure $dg$, they induce a $G$-invariant measure $dx$ on $X$. We call such a measure \emph{induced from slices}. Such measures are natural choices; see for example Lemma 4.1 in \cite{HSIII}.
\begin{theorem} \label{thm exist adm}
Suppose $X/G$ is compact. Suppose that at least one of the sets $G/K$ and $X/G$ is infinite. Let $H_X = L^2(E) \otimes L^2(G)$, for a Hermitian $G$-vector bundle $E \to X$, defined with respect to the a measure on $X$ induced from slices. Then $H_X$, equipped with the diagonal representation of $G$ and the $C_0(X)$-action on the factor $L^2(E)$ by pointwise multiplication, is an admissible $C_0(X)$ module. %We call $H_X$ a geometric admissible module.
\end{theorem}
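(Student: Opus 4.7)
The plan is to construct the required $G$-equivariant unitary $\Psi$ using the Fell absorption principle, to verify ampleness of $H_X$, and then to check Conditions (1) and (2) of Definition \ref{def adm mod} by computing how the $C_0(X)$-action transports under $\Psi$ and exploiting cocompactness of $X/G$.

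For any unitary $G$-representation $V$, the Fell absorption principle produces a $G$-equivariant unitary $W \colon L^2(G, V) \to L^2(G, V)$ given by $(W\xi)(g) = \pi_V(g^{-1})\xi(g)$, which intertwines the diagonal action (left-regular on $L^2(G)$ tensored with $\pi_V$) with the action that is trivial on $V$ and left-regular on $L^2(G)$. Applying this to $V = L^2(E)$ and composing with the obvious factor swap produces a $G$-equivariant unitary $\Psi \colon L^2(G) \otimes H \to H_X$, where $H := L^2(E)$ is regarded as a Hilbert space equipped with the trivial $G$-action. Separability of $H$ follows from second countability of $X$, and infinite-dimensionality follows from the hypothesis that $G/K$ or $X/G$ is infinite (which together with the slice decomposition of $X$ ensures that $L^2(E)$ is infinite-dimensional). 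Ampleness of $H_X$ is then immediate: nondegeneracy is inherited from that of the $C_0(X)$-action on $L^2(E)$, and standardness follows because $\pi(f) = m_f \otimes \mathrm{Id}_{L^2(G)}$ is compact only when $m_f = 0$, since $L^2(G)$ is infinite-dimensional.

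A direct computation shows that under $\Psi$ the $C_0(X)$-action on $H_X$ is transported, on $L^2(G) \otimes H \cong L^2(G \times X, \pi_X^* E)$, to multiplication by the function $(g, x) \mapsto f(gx)$. Any bounded $G$-equivariant operator $T$ on $L^2(G) \otimes H$ commutes with $\lambda_g \otimes \mathrm{Id}_H$, hence is ``translation-invariant'' on the $L^2(G)$-factor, with an integral kernel of the form $\kappa(g^{-1}h) \in \cB(H)$, at least morally. Finite $C_0(G)$-propagation of such a $T$ is equivalent to $\kappa$ being supported in a bounded subset of $G$. The analogous condition for the transported $C_0(X)$-action, after using $G$-equivariance to reduce to the single-variable kernel, becomes a support condition on $\kappa$ phrased in terms of $X$-distances via the orbit map $g \mapsto gx_0$ for a base point $x_0 \in X$ (or the analogous maps on each slice when $X$ has several orbit types). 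Cocompactness of $X/G$ implies this orbit map is a coarse equivalence between $G$ and $X$ modulo compact stabilizers, so the two support conditions agree up to a bounded additive error, yielding Condition (1). An analogous argument gives Condition (2): local compactness with respect to $C_0(X)$ and with respect to $C_0(G)$ both translate to compactness statements for pieces of $\kappa$ cut off by functions supported on coarsely equivalent sets.

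The main obstacle will be the bookkeeping in the last step: the transported $C_0(X)$-action $(g, x) \mapsto f(gx)$ genuinely mixes the $G$- and $X$-coordinates, so naive tensor-product arguments do not suffice. The crucial technical step is to use $G$-equivariance of $T$ to reduce all quasi-locality conditions to conditions on the single-variable kernel $\kappa$, after which the coarse equivalence between $G$ and $X$ induced by the cocompactness hypothesis completes the comparison.
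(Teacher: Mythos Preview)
Your approach has a genuine gap: the Fell absorption map does \emph{not} give an isomorphism $\Psi$ satisfying Condition~(1) of Definition~\ref{def adm mod}. The problem is in your claim that finite $C_0(X)$-propagation ``becomes a support condition on $\kappa$ phrased in terms of $X$-distances.'' Under the Fell map, the transported $C_0(X)$-action on $L^2(G)\otimes H$ is multiplication by $(g,x)\mapsto f(gx)$, and after reducing via $G$-equivariance, finite $X$-propagation becomes the condition that $m_{f_1}\,\kappa(a)\,m_{a^{-1}\cdot f_2}=0$ for all $a\in G$ whenever $d_X(\supp f_1,\supp f_2)>r$. This is a condition on each $\kappa(a)$ \emph{as an operator on $H=L^2(E)$}, not merely on the $G$-support of $\kappa$. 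Here is a counterexample with $G=X=\mathbb{R}$ acting by translation and $E$ trivial: on $H_X=L^2(\mathbb{R}_x\times\mathbb{R}_g)$ take $T\xi(x,g)=\xi(2g-x,g)$. This is unitary and $G$-equivariant for the diagonal action; it has $C_0(G)$-propagation zero (it commutes with multiplication by $\chi(g)$), yet $f_1(x)\,T\,f_2(x)$ never vanishes for nonzero $f_1,f_2$, so its $C_0(X)$-propagation is infinite. Equivalently, on $L^2(G)\otimes H$ the conjugated operator is $\eta(g,x)\mapsto\eta(g,-x)$, whose ``kernel'' is $\delta_e\otimes R$ with $R$ the reflection on $H$; the $G$-support is as small as possible while the $X$-propagation of $R$ is infinite.

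The paper's proof avoids this by constructing $\Psi$ from an explicit measure-space bijection $\psi\colon X\times G\to G\times K\backslash G\times Y$ built out of the slice decomposition (Lemma~\ref{lem measurable iso}). The essential feature, isolated in Lemma~\ref{lem psi1}, is that the first component of $\psi$ satisfies $\psi_1([g,y],h)=gk^{-1}$ for some $k\in K$; that is, the $C_0(G)$-coordinate on the target tracks the orbit parameter $g$ in $X=G\times_K Y$, not the auxiliary $L^2(G)$-factor of $H_X$. With this $\Psi$, the transported $C_0(G)$-action is, up to the compact ambiguity $K$, the same as the $C_0(X)$-action, and then the coarse equivalence argument you outline (Lemma~\ref{lem dG dX}) does go through for Conditions~(1) and~(2). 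In short, the coarse-equivalence idea is right, but it must be applied to a $\Psi$ whose $G$-coordinate is geometrically aligned with $X$; the naive Fell $\Psi$ aligns it with the wrong factor.
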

\begin{definition}
If $X/G$ is compact, then $L^2(E) \otimes L^2(G)$, for a Hermitian $G$-vector bundle $E \to X$, defined with respect to the a measure on $X$ induced from slices is a \emph{geometric admissible $C_0(X)$-module}.
\end{definition}

Note that the difference between a general admissible module in Definition \ref{def adm mod} and a geometric module is that on a geometric admissible module, the group $G$ acts diagonally, whereas on a general admissible module in the form $L^2(G)\otimes H$, $G$ acts only on the first factor. The advantage of working with a geometric module is that the action by $C_0(X)$ is explicit. We will in fact usually work with geometric admissible modules. 

The notion of a geometric admissible module, and hence Theorem \ref{thm exist adm}, is a key component of our construction of (localised) coarse indices of elliptic operators in Subsections \ref{sec ind ell} and \ref{sec loc ind ell}.

%The fact that they have property 1 in Definition \ref{def adm mod} is used in an essential way in the proof of Theorem \ref{thm Roe group Cstar}; see Subsection \ref{sec kernels}.

\begin{remark} \label{rem H inf dim}
The condition in Definition \ref{def adm mod} that $H$ is infinite-dimensional, and the corresponding condition in Theorem \ref{thm exist adm} that $G/K$ or $X/G$ is infinite, are assumed to ensure that the equivariant coarse index theory of Section \ref{sec index} is rich enough to capture information about the group $G$ (see Examples \ref{ex non adm} and \ref{ex ind non adm}). More specifically, infinite-dimensionality of $H$ guarantees that the algebras $D^*(X)^G$, which are used to define the coarse index (see Subsection \ref{sec loc ind}) exist and have the properties needed to define a useful index. It also implies that the localised Roe algebra is independent of the choice of admissible module, as in \eqref{eq loc Roe group Cstar}. (Although for finite-dimensional $H$, the factor $\mathcal{K}$ in \eqref{eq loc Roe group Cstar} would become a finite-dimensional matrix algebra, which makes no difference at the level of $K$-theory.)

In Theorem \ref{thm exist adm}, if both $G/K$ and $X/G$ are finite, then one can still form the admissible module $L^2(E) \otimes L^2(G) \otimes l^2(\N)$.
\end{remark}

\subsection{Equivariant Roe algebras} \label{sec nonloc Roe}

Fix an equivariant $C_0(X)$-module $H_X$.
We denote the algebra of $G$-equivariant bounded operators  $H_X$ by $\cB(H_X)^G$.
\begin{definition}\label{def Roe alg}
The \emph{algebraic equivariant Roe algebra for $H_X$} of $X$ is the algebra  $C^*_{\alg}(X; H_X)^G$  consisting of the locally compact operators in $\cB(H_X)^G$ with finite propagation.
The \emph{equivariant Roe algebra for $H_X$}  of $X$ is the closure $C^*(X; H_X)^G$ of $C^*_{\alg}(X)^G$ in $\cB(H_X)$.

If $H_X$ is an admissible module and $X/G$ is compact, then $C^*_{\alg}(X)^G := C^*_{\alg}(X; H_X)^G$ is the  \emph{algebraic equivariant Roe algebra of $X$}, and $C^*(X)^G := C^*(X; H_X)^G$ is the  \emph{equivariant Roe algebra of $X$}.
\end{definition}
%
%\begin{definition}\label{def Roe alg}
%The \emph{algebraic equivariant Roe algebra for $H_X$} of $X$ is the algebra  $C^*_{\alg}(X; H_X)^G$  consisting of the locally compact operators in $\cB(H_X)^G$ with finite propagation.
%The \emph{equivariant Roe algebra for $H_X$} of $X$ is the closure $C^*(X; H_X)^G$ of $C^*_{\alg}(X;H_X)^G$ in $\cB(H_X)$.
%\end{definition}
%\begin{remark}
%If $X/G$ is compact, then we will usually use the shorthand notation $C^*_{\alg}(X)^G$ for $C^*_{\alg}(X;H_X)$, and $C^*(X)^G$ for $C^*(X;H_X)^G$. See Remark \ref{rem Roe indep} below.
%\end{remark}
%The following proposition in particular shows that Roe algebras defined with respect to geometric admissible modules are independent of $H_X$, if $X/G$ is compact.

\begin{theorem}\label{thm Roe group Cstar}
If $X/G$ is compact and $H_X$ is , then $C^*_{\alg}(X)^G$ is $*$-isomorphic to a dense subalgebra of $C^*(G) \otimes \cK$, where $C^*(G)$ denotes either the reduced or maximal group $C^*$-algebra, and $\cK$ is the algebra of compact operators on a separable Hilbert space.
\end{theorem}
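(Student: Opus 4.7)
The plan is to use admissibility to reduce to a canonical Hilbert space model, and then identify $C^*_\alg(X)^G$ with a convolution $*$-algebra that embeds densely in $C^*(G) \otimes \cK$ under either completion. First, Definition \ref{def adm mod} provides a $G$-equivariant unitary $\Psi\colon L^2(G) \otimes H \to H_X$ under which, by the two conditions listed there, finite propagation and local compactness of $G$-equivariant operators are preserved. Conjugation by $\Psi$ therefore gives a $*$-isomorphism
\[
C^*_\alg(X; H_X)^G \;\cong\; C^*_\alg(G; L^2(G) \otimes H)^G,
\]
where the right-hand side is defined using the left-regular $G$-action $\lambda \otimes 1$ and the multiplication action of $C_0(G)$ on the $L^2(G)$ factor. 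It will therefore suffice to identify the right-hand side, which I denote $\mathcal{R}$, with a dense subalgebra of $C^*(G) \otimes \cK$.

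Next I would realize $\mathcal{R}$ as a convolution $*$-algebra. The $G$-equivariant bounded operators on $L^2(G) \otimes H$ form (the von Neumann tensor product of) the commutant of $\lambda(G)$ with $\cB(H)$, namely $\rho(G)'' \otimes \cB(H)$, where $\rho$ is the right-regular representation. Within this, consider the $*$-subalgebra $\mathcal{A}$ of right-convolution operators
\[
(\Phi(k)\xi)(g) = \int_G k(g^{-1}h)\,\xi(h)\,dh,
\]
indexed by an appropriate class of compactly supported bounded measurable $\cK(H)$-valued functions $k$ on $G$, with convolution product and involution $k^*(g) = k(g^{-1})^*$. A direct check shows $\Phi$ is an injective $*$-homomorphism with image in $\mathcal{R}$: compact support of $k$ implies finite propagation, and $\cK(H)$-valued compactly supported kernels give locally compact operators (by approximation with finite-rank valued continuous kernels). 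The converse---that every $T \in \mathcal{R}$ is of the form $\Phi(k_T)$---is the main step, obtained by disintegrating $T$ via its placement in $\rho(G)'' \otimes \cB(H)$, the compactly supported ``propagation support'' from finite propagation, and the $\cK(H)$-valued nature forced by local compactness.

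Finally, the algebra $\mathcal{A}$ contains $C_c(G, \cK(H))$ (the continuous compactly supported $\cK(H)$-valued functions) as a $*$-subalgebra, and this subalgebra is well-known to be dense in both $C^*_\red(G) \otimes \cK$ (via the convolution representation on $L^2(G) \otimes H$) and in $C^*_{\max}(G) \otimes \cK$ (via the universal property of the maximal completion). This yields the desired $*$-isomorphism between $C^*_\alg(X)^G$ and a dense $*$-subalgebra of $C^*(G) \otimes \cK$ for either choice of completion. The main obstacle will be the surjectivity claim in the second paragraph: for non-discrete $G$, the kernel $k_T$ of a general $T \in \mathcal{R}$ may only be a bounded measurable function, so the delicate step is to combine $G$-equivariance, finite propagation, and local compactness to extract it, and simultaneously to pin down the correct $*$-algebra $\mathcal{A}$ of kernels making $\Phi$ bijective onto $\mathcal{R}$.
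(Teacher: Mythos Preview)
Your reduction via admissibility to operators on $L^2(G)\otimes H$ is exactly how the paper begins. After that, however, your plan diverges from the paper's and runs into a genuine gap.

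The paper does \emph{not} attempt to identify $C^*_{\alg}(X)^G$ with a kernel algebra bijectively. Instead it introduces the subalgebra $C^*_{\ker}(X)^G$ of operators given by \emph{continuous}, $G$-invariant, finite-propagation kernels $\kappa\colon G\times G\to\cK(H)$, observes the elementary isomorphism $C^*_{\ker}(X)^G\cong C_c(G)\otimes\cK(H)$, and then proves that $C^*_{\ker}(X)^G$ is \emph{operator-norm dense} in $C^*_{\alg}(X)^G$. The density proof is concrete: given $T\in C^*_{\alg}(X)^G$, one writes $T=\int_G g\cdot(\chi_1 T\chi\,\chi_2)\cdot g^{-1}\,dg$ for suitable cutoffs, uses local compactness to expand $T\chi=\sum a^j_k\, e_j\otimes e^k$ as a norm-convergent sum of rank-one operators, and checks that the resulting averaged rank-one pieces have continuous finite-propagation kernels and that the partial sums converge to $T$ in operator norm. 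Since the operator-norm closures then coincide, $C^*_{\alg}(X)^G$ sits as a dense $*$-subalgebra of $C^*_{\red}(G)\otimes\cK$; the same density yields finiteness of the maximal norm and the maximal statement.

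Your route has two problems. First, the surjectivity you flag is not merely technical: for non-discrete $G$ an equivariant, finite-propagation, locally compact operator need not be right convolution by a bounded measurable $\cK(H)$-valued function --- the ``kernel'' may only exist as a compactly supported operator-valued distribution --- so specifying the correct $\mathcal{A}$ and proving $\Phi$ is onto is at least as hard as what you are trying to avoid. Second, and more seriously, your final paragraph does not prove the theorem even if surjectivity holds. From $C_c(G,\cK(H))\subset\mathcal{A}\cong C^*_{\alg}(X)^G$ and density of $C_c(G,\cK(H))$ in $C^*(G)\otimes\cK$ you can only conclude that $C^*_{\alg}(X)^G$ \emph{contains} a dense subalgebra of $C^*(G)\otimes\cK$, not that it \emph{is} one. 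To embed $C^*_{\alg}(X)^G$ into $C^*(G)\otimes\cK$ you must show $C^*_{\alg}(X)^G$ lies in the closure of $C_c(G,\cK(H))$, i.e.\ that $C_c(G,\cK(H))$ is dense in $C^*_{\alg}(X)^G$ in the relevant norm. That density statement is precisely the content of the paper's key proposition, and your plan does not supply it.
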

This theorem will be proved in Subsection \ref{sec kernels}. This involves realising Roe algebras in terms of Schwartz kernels of operators.

There is also a maximal version of the equivariant Roe algebra.
For any $*$-algebra $A$, and any $a \in A$, we write
\[
\|a\|_{\max}:= \sup_{\pi} \|\pi(a)\|_{\cB(H_{\pi})},
\]
where the supremum runs over all irreducible $*$-representations $\pi$ of $A$ in Hilbert spaces $H_{\pi}$. This supremum may be infinite. Since this maximal norm is always finite for group $C^*$-algebras, Theorem \ref{thm Roe group Cstar} has the following consequence.
\begin{corollary}
If $X/G$ is compact, then $\|a\|_{\max}<\infty$ for all $a \in C^*_{\alg}(X)^G$.
\end{corollary}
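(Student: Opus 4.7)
The plan is to deduce the corollary from Theorem \ref{thm Roe group Cstar} by taking $C^*(G) = \Cmax$, the maximal group $C^*$-algebra. That theorem then provides a $*$-isomorphism $\phi \colon C^*_{\alg}(X)^G \xrightarrow{\cong} A$ onto a dense $*$-subalgebra $A$ of the $C^*$-algebra $B := \Cmax \otimes \cK$. Since $B$ is a $C^*$-algebra, $\|\phi(a)\|_B$ is automatically finite for every $a \in C^*_{\alg}(X)^G$. The goal is therefore to establish the bound $\|a\|_{\max} \leq \|\phi(a)\|_B$.

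To do this, I would first invoke the Schwartz kernel description of $C^*_{\alg}(X)^G$ used in the proof of Theorem \ref{thm Roe group Cstar}, under which elements of $C^*_{\alg}(X)^G$ correspond to compactly supported kernel-valued functions on $G$ whose multiplication and adjoint are given by convolution and involution. The map $\phi$ is then nothing but the natural inclusion of such convolution kernels into $\Cmax \otimes \cK$.

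Next, given an arbitrary irreducible $*$-representation $\pi \colon C^*_{\alg}(X)^G \to \cB(H_{\pi})$, I would show that $\pi$ extends to a (contractive) $*$-representation $\tilde\pi \colon B \to \cB(H_{\pi})$ along the dense inclusion $A \hookrightarrow B$. The key input is the universal property of $\Cmax$: any $*$-representation of the convolution algebra $C_c(G)$ by bounded operators arises via integration from a continuous unitary representation of $G$ and hence extends to $\Cmax$, with norm bounded by the $L^1$-norm. Applying this factor by factor to the kernel description, and using that bounded $*$-representations of $\cK$ are uniquely determined, yields the desired extension. Contractivity of $\tilde\pi$ (automatic for $*$-representations of $C^*$-algebras) then gives $\|\pi(a)\| = \|\tilde\pi(\phi(a))\| \leq \|\phi(a)\|_B$, and taking the supremum over irreducible $\pi$ finishes the proof.

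The main obstacle will be step three: verifying that an arbitrary $*$-representation of the convolution-type $*$-algebra $C^*_{\alg}(X)^G$ actually arises in the integrated form needed to apply the universal property of $\Cmax$. This entails checking the appropriate continuity and $L^1$-boundedness in the group variable of the representation induced on the kernel algebra, a standard but technically delicate point in the theory of group $C^*$-algebras that will rely crucially on the admissibility of $H_X$ (in particular the $L^2(G)$-factor built into Definition \ref{def adm mod}).
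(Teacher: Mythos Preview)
Your approach is the paper's approach. The paper's entire proof is the single sentence immediately preceding the corollary: ``Since this maximal norm is always finite for group $C^*$-algebras, Theorem \ref{thm Roe group Cstar} has the following consequence.'' That is, one transports the problem via the $*$-isomorphism of Theorem \ref{thm Roe group Cstar} (with $C^*(G)=C^*_{\max}(G)$) to a dense $*$-subalgebra of $C^*_{\max}(G)\otimes\cK$, and then invokes the classical fact that the maximal norm on the convolution algebra underlying the group $C^*$-algebra is finite. Your proposal unpacks exactly this implication: the ``main obstacle'' you flag --- that an arbitrary abstract $*$-representation of the convolution-type kernel algebra arises in integrated form and is therefore $L^1$-bounded --- is precisely the content of the phrase ``always finite for group $C^*$-algebras,'' which the paper takes as known (automatic continuity of $*$-representations of the Banach $*$-algebra $L^1(G,\cK)$). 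Your outline is thus correct and faithful to the paper's argument, only more explicit about a point the paper treats as standard.

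One minor comment: since a $*$-isomorphism transports the maximal norm tautologically (irreducible $*$-representations correspond bijectively), you do not strictly need to extend each representation to $B$; it suffices to know that $\|\cdot\|_{\max}$ is finite on the image algebra $A$. The substantive work is then entirely in the group-$C^*$-algebra fact, exactly as you say.
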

\begin{definition}
If $X/G$ is compact, then the \emph{maximal equivariant Roe algebra} of $X$ is the completion of $C^*_{\alg}(X)^G$ in the maximal norm $\|\cdot \|_{\max}$.
\end{definition}
\begin{remark}
Gong, Wang and Yu \cite{GWY} proved that if $G = \Gamma$ is discrete, then the maximal norm on $C^*_{\alg}(X)^{\Gamma}$ is always finite, even if $X/\Gamma$ is not compact, as long as $X$ has bounded geometry. We expect this to be true also when $G$ is locally compact. (In \cite{GWY}, the Roe algebras are defined in terms of kernels; see Subsection \ref{sec kernels}.)
\end{remark}

If $X/G$ is compact then Theorem \ref{thm Roe group Cstar} implies that there are $*$-isomorphisms
\begin{align}
C^*(X)^G&\cong C^*_{\red}(G) \otimes \cK; \label{eq red Roe cocpt}\\
C^*_{\max}(X)^G&\cong C^*_{\max}(G) \otimes \cK. \label{eq max Roe cocpt}
\end{align}
\begin{remark}
\label{rem Roe indep}
The relations \eqref{eq red Roe cocpt} and \eqref{eq max Roe cocpt} in particular imply that the reduced and maximal algebras are independent of the choice of the admissible $C_0(X)$-module $H_X$. We expect this to be true even if $X/G$ is not compact. In the case of $G=\Gamma$ discrete and $X/\Gamma$ compact, a proof can be found in the forthcoming book \cite{WillettYu}.
\end{remark}

\begin{example} \label{ex non adm}
Suppose that $G = K$ is compact, and $X = \pt$ is a point. Then $l^2(\N)$ is an ample module over $C_0(\pt) = \C$. It is equivariant if we equip it with the trivial action by $K$. The algebraic, reduced and maximal  equivariant Roe algebras defined with respect to this module all equal $\cK(l^2(\N))$, which contains no group-theoretic information about $K$. This is because the module $l^2(\N)$ is not admissible.
\end{example}

\subsection{Localised Roe algebras} \label{sec loc Roe}

The localised index that we will define in Definition \ref{def loc index} involves a localised version of Roe algebras.

Let $H_X$ be an equivariant $C_0(X)$-module.
Let $Z \subset X$ be a $G$-invariant, closed subset.
\begin{definition} \label{def loc Roe}
An operator $T \in \cB(H_X)$ is \emph{supported near $Z$} if there is an $r>0$ such that for all $f \in C_0(X)$ whose support is at least a distance $r$ away from $Z$, we have $fT = Tf = 0$.

The \emph{algebraic equivariant Roe algebra for $H_X$ of $X$, localised at $Z$}, denoted by $C^*_{\alg}(X; Z, H_X)^G$, consists of the operators in $C^*_{\alg}(X, H_X)^G$ supported near $Z$.

The \emph{equivariant Roe algebra for $H_X$ of $X$, localised at $Z$}, denoted by $C^*(X; Z, H_X)^G$, is the closure of $C^*_{\alg}(X; Z, H_X)^G$ in $\cB(H_X)$.

%If $H_X$ is a geometric admissible module, then we omit it from the notation, and call $C^*_{\alg}(X; Z)^G := C^*_{\alg}(X; Z, H_X)^G$ the \emph{algebraic equivariant Roe algebra  of $X$, localised at $Z$}, and $C^*(X; Z)^G := C^*(X; Z, H_X)^G$ the \emph{equivariant Roe algebra of $X$, localised at $Z$}.

If $Z/G$ is compact, then we call $C^*_{\alg}(X; H_X)^G_{\loc} := C^*_{\alg}(X; Z, H_X)^G$ the \emph{localised algebraic equivariant Roe algebra for $H_X$ of $X$}, and $C^*(X; H_X)^G_{\loc} := C^*(X; Z, H_X)^G$ the \emph{localised equivariant Roe algebra for $H_X$ of $X$}. If $H_X$ is an admissible module, then we omit it from the notation and terminology, and obtain the \emph{localised algebraic equivariant Roe algebra} $C^*_{\alg}(X)^G_{\loc}$ and the  \emph{localised equivariant Roe algebra} $C^*(X)^G_{\loc}$ of $X$.
\end{definition}
Of the terms in Definition \ref{def loc Roe}, the {localised equivariant Roe algebra} $C^*(X)^G_{\loc}$ is the one we are most interested in. Note that if $Z/G$ is {compact}, then the algebras $C^*_{\alg}(X; Z, H_X)^G$ and  $C^*(X; Z, H_X)^G$ are independent of $Z$, as long as $Z/G$ is compact.

For $r>0$, we write
\beq{eq def Pen}
\Pen(Z, r) := \{x\in X; d(x, Z) \leq r\}.
\eeq
In terms of these sets, we have
\beq{eq loc Roe inj lim}
\begin{split}
C^*_{\alg}(X; Z, H_X)^G &= \varinjlim_{r} C^*_{\alg}(\Pen(Z, r); H_X)^G;\\
C^*(X; Z, H_X)^G &= \varinjlim_{r} C^*(\Pen(Z, r); H_X)^G.
\end{split}
\eeq
If $H_X$ is admissible, then the algebra $C^*_{\alg}(\Pen(Z, r))^G$ has a well-defined maximal norm by Theorem \ref{thm Roe group Cstar}, for all $r$.  Hence, by \eqref{eq loc Roe inj lim}, so does $C^*_{\alg}(X)^G_{\loc}$. 
\begin{definition}
If $H_X$ is admissible, the \emph{maximal localised  equivariant Roe algebra of $X$}, denoted by $C^*_{\max}(X)^G_{\loc}$, is the completion of $C^*_{\alg}(X)^G_{\loc}$ in the maximal norm $\|\cdot \|_{\max}$.
\end{definition}

Theorem \ref{thm Roe group Cstar} and \eqref{eq loc Roe inj lim} imply that
\beq{eq loc Roe group Cstar}
\begin{split}
C^*(X)^G_{\loc} & \cong C^*_{\red}(G) \otimes \cK;\\
C^*_{\max}(X)^G_{\loc} & \cong C^*_{\max}(G) \otimes \cK.
\end{split}
\eeq

\section{The localised equivariant index} \label{sec index}

%\subsection{The localised equivariant index} \label{sec loc index}

\subsection{Indices of abstract operators}  \label{sec loc ind}

The (non-localised) equivariant coarse index is defined completely analogously to the case for discrete groups, but with Roe algebras defined in terms of the admissible modules from Subsection \ref{sec adm}.

Let $H_X$ be an equivariant $C_0(X)$-module. Let $C^*(X; H_X)^G$ denote the reduced or maximal version (if it exists\footnote{For a general space, one first needs to show finiteness of the maximal norm to know that $C^*_\textnormal{max}(X;H_X)^G$ is well-defined.}) of the equivariant Roe algebra for $H_X$. Let $D^*(X)^G$
%\subset \cB(H_X)^G$
be any $C^*$-algebra containing $C^*(X; H_X)^G$ as a two-sided ideal. For example, we can take $D^*(X)^G$ to be the multiplier algebra of $C^*(X; H_X)^G$, or the $C^*$-algebra generated by  $C^*(X; H_X)^G$ and a single operator in $\cB(H_X)^G$. 

\begin{remark}
\label{rem Dstarred}
In the reduced case, a natural choice for $D^*(X)^G$ is the algebra $D^*_{\red}(X)^G$, an equivariant version of the algebra $D^*(X)$ used in \cite{Roe16}. This is defined as the closure in $\cB(H_X)$ of the algebra of operators $T \in \cB(H_X)^G$ with finite propagation such that $[T,f]$ is compact for all $f \in C_0(X)$.
\end{remark}
Let
\beq{eq bdry}
\partial\colon K_{*+1}(D^*(X)^G/C^*(X)^G) \to K_{*}(C^*(X)^G)
\eeq
be the boundary map associated to the short exact sequence
\[
0 \to C^*(X)^G\to D^*(X)^G \to D^*(X)^G/C^*(X)^G \to 0.
\]
\begin{definition} \label{def coarse index}
Let $F \in D^*(X)^G$, and suppose that $F - F^*$ and $F^2-1$ lie in $C^*(X; H_X)^G$.
\begin{itemize}
\item If no grading on $H_X$ given, consider the projection $P = \frac{1}{2}(F+1)$ in $D^*(X)^G/C^*(X; H_X)^G$ and the class $[P] \in K_{0}(D^*(X)^G/C^*(X; H_X)^G)$. Then the \emph{equivariant coarse index} of $F$ is
\[
\ind_G(F) = \partial[P] \quad \in K_1(C^*(X; H_X)^G).
\]
\item If a grading on $H_X$ is given that is preserved by $C_0(X)$ and $G$, and interchanged by $F$, let $F_+$ be the restriction of $F$ to the even-degree part of $H_X$. Then $F_+$ is invertible modulo $C^*(X)^G$, and we have $[F_+] \in K_{1}(D^*(X)^G/C^*(X; H_X)^G)$. The \emph{equivariant coarse index} of $F$ is
\[
\ind_G(F) = \partial[F_+] \quad \in K_0(C^*(X; H_X)^G).
\]
\end{itemize}
\end{definition}
More generally, we will also write $\ind_G$ for the boundary map \eqref{eq bdry}.

Let $Z \subset X$ be a closed $G$-invariant subset. Let $D^*(X)^G \subset \cB(H_X)^G$ be a $C^*$-algebra containing $C^*(X; Z, H_X)^G$ as a two-sided ideal. The algebra $D^*_{\red}(X)^G$ defined above has this property.
\begin{definition} \label{def loc index}
Let $F \in D^*(X)^G$, and suppose that $F - F^*$ and $F^2-1$ lie in $C^*(X; Z, H_X)^G$. The \emph{equivariant coarse index of $F$, localised at $Z$},
\[
\ind_G^Z(F) \in K_*(C^*(X; Z, H_X)^G)
\]
 is defined analogously to $\ind_G(F)$ in Definition \ref{def coarse index}, with $C^*(X, H_X)^G$ replaced by $C^*(X; Z, H_X)^G$ everywhere.
 
 If $Z/G$ is compact and $H_X$ is an admissible module, we write
 \[
 \ind_G^{\loc}(F) := \ind_G^Z(F) \quad \in K_*(C^*(X)^G_{\loc}) = K_*(C^*(G)),
 \]
 and call this the \emph{localised equivariant coarse index} of $F$. Here $C^*(G)$ denotes either the reduced or maximal group $C^*$-algebra.
\end{definition}
We will also denote the boundary map \eqref{eq bdry}, with $C^*(X, H_X)^G$ replaced by $C^*(X; Z, H_X)^G$, by $\ind_G^Z$, or by $\ind_G^{\loc}$ if $Z/G$ is compact and $H_X$ is an admissible module.

\subsection{The equivariant coarse index of elliptic operators} \label{sec ind ell}

In the rest of this section, we work with the reduced version of the equivariant Roe algebra and specialise to the geometric setting that we are most interested in.  Let $X = M$ be a Riemannian manifold, and $d$ the Riemannian distance. Suppose, as before, that $G$ acts properly and isometrically on $M$. Let $E\to M$ 
be a $G$-equivariant Hermitian vector bundle. Let $D$ be a $G$-equivariant,  first order elliptic differential operator on $E$ that is essentially self-adjoint on $L^2(E)$. 
%We restrict to the graded case in this subsection and the next, because the maps $\oplus 0$ and $\oplus 1$ we use below induce maps on even and odd $K$-theory, respectively.

To apply Definitions \ref{def coarse index} and \ref{def loc index} to $D$, we embed $L^2(E)$ into the (geometric) admissible module $H_M := L^2(E) \otimes L^2(G)$ of Theorem \ref{thm exist adm}.   We will illustrate why using admissible modules is necessary in Example \ref{ex ind non adm} (see also Example \ref{ex non adm}).
 
Let $\chi \in C^{\infty}(M)$ be a cutoff function, in the sense that its support has compact intersections with all $G$-orbits, and that for all $m \in M$,
\beq{eq def cutoff}
\int_G \chi(gm)^2\, dg = 1.
\eeq
The map
\beq{eq def j}
j\colon L^2(E)\to H_M,
\eeq
given by
\[
(j(s))(m,g) = \chi(g^{-1}m)s(m),
\]
for $s \in L^2(E)$, $m \in M$ and $g \in G$, is a $G$-equivariant, isometric embedding. It intertwines the actions by $C_0(M)$ on $L^2(E)$ and $H_M$. 
Define the maps
\beq{eq def plus 01}
\oplus\,0, \oplus\,1\colon \cB(L^2(E)) \to \cB(H_M)
\eeq
by identifying operators on $L^2(E)$ with operators on $j(L^2(E))$ via conjugation by $j$, and extending them by zero or the identity operator, respectively, on the orthogonal complement of $j(L^2(E))$ in $H_M$. 

%Let $C^*(M; L^2(E))^G$ be defined as the algebra $C^*(M)^G$ in Subsection \ref{sec loc Roe}, with $H_M$ replaced by the generally non-admissible $C_0(M)$-module $L^2(E)$. 

%The map $\oplus 0$ preserves finite propagation, local compactness and having support near $Z$, and hence restricts to an injective $*$-homomorphism
%\beq{eq incl plus 0}
%\oplus 0\colon C^*(M; L^2(E))^G \to C^*(M; H_M)^G.
%\eeq
%We denote the map induced on $K$-theory by $\oplus 0$ as well.

Let $b \in C_b(\R)$ be a normalising function, i.e.\ an odd function with values in $[-1,1]$ such that $\lim_{x \to \infty}b(x)=1$.
Let $D^*(M; L^2(E))^G$ be a unital $*$-subalgebra of $\cB(L^2(E))$ containing $b(D)$, and $C^*(M; L^2(E))^G$ as a two-sided 
ideal. Similarly, let  $D^*(M)^G$ be a unital $*$-subalgebra of $\cB(H_M)$ containing $C^*(M; H_M)^G$ as a two-sided ideal. 
Suppose that the image of $D^*(M; L^2(E))^G$ under the map $\oplus\,1$ lies inside $D^*(M)^G$. 
This is the case if we use multiplier algebras, or the algebra $D^*_{\red}(M)^G$ as in Remark \ref{rem Dstarred}.

If $F \in D^*(M; L^2(E))^G$ and $F^*-F$ and $F^2-1$ lie in $C^*(M; L^2(E))^G$, then $F \oplus\,1 \in D^*(X)^G$ by assumption, and $(F\oplus 1)^*-F\oplus 1$ and $(F\oplus 1)^2-1$ lie in $C^*(M; H_M)^G$. Hence $F \oplus 1$ has an index in $K_*(C^*(M; H_M)^G)$ as in Definition \ref{def coarse index}.
\begin{definition} \label{def coarse index ell}
The \emph{equivariant coarse index} of $D$ is
\[
\ind_G(D) := \ind_G(b(D) \oplus 1) \quad \in K_*(C^*(M; H_M)^G),
\]
where the index on the right hand side is as in Definition \ref{def coarse index}.
\end{definition}
As in Definition \ref{def coarse index}, $\ind_G(D)$ lies in even or odd $K$-theory depending on the presence of a grading on $E$ with respect to which $D$ is odd.

\subsection{The localised index of elliptic operators}\label{sec loc ind ell}
%%%% Localised version %%%

Again, let $Z \subset M$ be a closed,  $G$-invariant subset. 
Suppose that there is a constant $c>0$ such that for all $s \in \Gamma^{\infty}_c(E)$ supported outside $Z$,
\[
\|Ds\|_{L^2} \geq c \|s\|_{L^2}.
\]
Let $b \in C^{\infty}(\R)$ be an odd, increasing function taking values in $\{\pm 1\}$ on $\R \setminus [-c,c]$. 
Form the operator $b(D)$ by functional calculus.
The following result by Roe is the basis of the index theory we develop in this paper.
\begin{proposition} \label{prop Roe}
The operator $b(D) \in \cB(L^2(E))$ satisfies
\[
b(D)^2-1 \in C^*(M; Z, L^2(E))^G.
\]
\end{proposition}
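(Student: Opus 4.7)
Write $\psi := b^{2}-1$. Since $b$ is constant $\pm 1$ off $[-c,c]$, the function $\psi$ is smooth, supported in $[-c,c]$, and vanishes to infinite order at $\pm c$; in particular $\|\lambda\psi\|_{\infty}<c\|\psi\|_{\infty}$ (strict). By Borel functional calculus $\psi(D)=b(D)^{2}-1$ is a bounded, self-adjoint, $G$-equivariant operator on $L^{2}(E)$. The task is to show that $\psi(D)$ lies in the norm closure of the $G$-equivariant, locally compact, finite-propagation operators on $L^{2}(E)$ supported near $Z$. I will produce such an approximation in three layers: (i) localise in space by cutoffs $\chi_{r}$ supported in $\mathrm{Pen}(Z,r)$, (ii) localise in propagation by truncating the Fourier transform of $\psi$, and (iii) use the spectral gap of $D$ outside $Z$ to control the error introduced in (i).

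\emph{Local compactness and finite-propagation approximation.} For $f\in C_{c}(M)$, the fact that $\psi$ is Schwartz and $D$ is elliptic implies $\psi(D)$ maps $L^{2}(E)$ into $\bigcap_{s}H^{s}_{\mathrm{loc}}(E)$; composing with $f$ and invoking the Rellich--Kondrachov theorem on $\mathrm{supp}(f)$ gives compactness of $f\psi(D)$ (and of $\psi(D)f$ by self-adjointness). For the finite-propagation approximation, I use Fourier inversion
\[
\psi(D)=\frac{1}{2\pi}\int_{\R}\hat\psi(t)\,e^{itD}\,dt,
\]
and set $\psi_{T}(D):=\frac{1}{2\pi}\int_{-T}^{T}\hat\psi(t)\,e^{itD}\,dt$. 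Unit propagation speed for the Dirac-type operator $D$ ensures $\psi_{T}(D)$ has propagation $\le T$, while the Schwartz decay of $\hat\psi$ gives $\|\psi-\psi_{T}\|_{\infty}\le \frac{1}{2\pi}\int_{|t|>T}|\hat\psi(t)|\,dt\to 0$, hence $\psi_{T}(D)\to\psi(D)$ in operator norm.

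\emph{Supporting the approximants near $Z$ -- the main obstacle.} The hard part is that $\psi_{T}(D)$ is not supported near $Z$. I would let $\chi_{r}$ be a smooth $G$-invariant cutoff equal to $1$ on $\mathrm{Pen}(Z,r-1)$ and $0$ outside $\mathrm{Pen}(Z,r)$, and consider the candidate
\[
T_{r,T}\ :=\ \chi_{r}\,\psi_{T}(D)\,\chi_{r},
\]
which is $G$-equivariant, has propagation $\le T+2$, is supported in $\mathrm{Pen}(Z,r)$, and is locally compact. The obstacle is to show that for suitably coupled $r=r_{T}\to\infty$ one has $\|\psi_{T}(D)-T_{r_{T},T}\|\to 0$, equivalently $\|(1-\chi_{r_{T}})\psi_{T}(D)\|\to 0$ (the other commutator piece being handled by self-adjointness). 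The crucial input is the hypothesis $\|Ds\|\ge c\|s\|$ for $s\in\Gamma_{c}^{\infty}(E)$ with $\mathrm{supp}(s)\cap Z=\emptyset$: since $D$ is a differential operator it preserves supports, so by induction $\|D^{k}s\|\ge c^{k}\|s\|$ for all $k\ge 1$. For $v\in L^{2}(E)$, the vector $w:=(1-\chi_{r})v$ is supported outside $\mathrm{Pen}(Z,r-1)$, and if $r>T$ then $u:=\psi_{T}(D)w$ is supported outside $\mathrm{Pen}(Z,r-1-T)\supset Z$ by finite propagation. Applying the inductive bound to $u$ (and, after mollifying $w$, to approximating smooth compactly supported sections) gives
\[
c^{k}\|u\|\ \le\ \|D^{k}u\|\ =\ \|(\lambda^{k}\psi_{T})(D)w\|\ \le\ \|\lambda^{k}\psi_{T}\|_{\infty}\,\|v\|.
\]
Combined with the strict inequality $\|\lambda\psi\|_{\infty}<c\|\psi\|_{\infty}$ (which improves to $\|\lambda^{k}\psi\|_{\infty}\le\alpha^{k}\|\psi\|_{\infty}$ with $\alpha<c$ on $[-c,c]$) and careful estimates on the tail $\psi_{T}-\psi$ at large $|\lambda|$ via integration by parts in the defining integral for $\psi_{T}$, one deduces $\|\psi_{T}(D)(1-\chi_{r})\|\to 0$ along the diagonal $r=r_{T}\to\infty$. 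This is the heart of Roe's original argument in \cite{Roe16}, and the technical challenge is coordinating the Fourier cutoff $T$ with the spatial cutoff $r$ and the power $k$ in the iterated positivity so that all error terms vanish simultaneously. Granting this, $\psi(D)=\lim_{T}T_{r_{T},T}$ in norm with $T_{r_{T},T}\in C^{*}_{\alg}(M;Z,L^{2}(E))^{G}$, proving $\psi(D)\in C^{*}(M;Z,L^{2}(E))^{G}$.
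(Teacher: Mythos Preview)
The paper does not prove this proposition itself; it simply cites Lemma~2.3 of \cite{Roe16}. Your plan is in fact a reconstruction of Roe's argument and has the right architecture: local compactness via elliptic regularity, finite-propagation approximation via Fourier truncation, and the iterated lower bound $\|D^{k}s\|\geq c^{k}\|s\|$ for sections supported off $Z$ to force the approximants to be supported near $Z$.

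There is, however, a genuine technical slip in the last step. With the \emph{sharp} Fourier cutoff $\psi_{T}(\lambda)=\tfrac{1}{2\pi}\int_{-T}^{T}\hat\psi(t)e^{it\lambda}\,dt$, the function $\psi_{T}$ is an entire Paley--Wiener function, hence not compactly supported in $\lambda$; one integration by parts shows it decays only like $|\lambda|^{-1}$ (the boundary terms at $\pm T$ do not vanish). Consequently $\|\lambda^{k}\psi_{T}\|_{\infty}=\infty$ for every $k\geq 2$, and your displayed chain $c^{k}\|u\|\leq \|(\lambda^{k}\psi_{T})(D)w\|\leq \|\lambda^{k}\psi_{T}\|_{\infty}\|v\|$ is vacuous. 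The fix is to replace the sharp cutoff by a smooth one: take $\phi_{T}=\psi*\rho_{T}$ with $\rho_{T}$ Schwartz and $\hat\rho_{T}$ supported in $[-T,T]$. Then $\phi_{T}(D)$ still has propagation $\leq T$, while each $\lambda^{k}\phi_{T}$ is bounded and $\|\lambda^{k}\phi_{T}\|_{\infty}\to\|\lambda^{k}\psi\|_{\infty}$ as $T\to\infty$.

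A second imprecision: the claimed bound $\|\lambda^{k}\psi\|_{\infty}\leq \alpha^{k}\|\psi\|_{\infty}$ with a \emph{fixed} $\alpha<c$ is false, since $\mathrm{supp}(\psi)=[-c,c]$ forces $(\|\lambda^{k}\psi\|_{\infty})^{1/k}\to c$. What is true, and what the argument actually needs, is the weaker statement $\|\lambda^{k}\psi\|_{\infty}=o(c^{k})$ as $k\to\infty$; this follows because $\psi$ is continuous with $\psi(\pm c)=0$, so for any $c'<c$ one has $\|\lambda^{k}\psi\|_{\infty}\leq\max\bigl((c')^{k},\,c^{k}\sup_{c'\leq|\lambda|\leq c}|\psi(\lambda)|\bigr)$. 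With these two corrections your scheme goes through: given $\varepsilon$, choose $k$ with $\|\lambda^{k}\psi\|_{\infty}/c^{k}<\varepsilon$, then $T$ large so that $\|\lambda^{k}\phi_{T}\|_{\infty}/c^{k}<\varepsilon$ and $\|\phi_{T}-\psi\|_{\infty}<\varepsilon$, then $r>(k+1)T$; the iterated bound now gives $\|\phi_{T}(D)(1-\chi_{r})\|<\varepsilon$, and $\chi_{r}\phi_{T}(D)\chi_{r}\in C^{*}_{\alg}(M;Z,L^{2}(E))^{G}$ approximates $\psi(D)$.
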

See Lemma 2.3 in \cite{Roe16}. As above Definition \ref{def coarse index ell} This proposition implies that $b(D)\oplus 1$ satisfies the conditions of Definition \ref{def loc index}.
%
%Proposition \ref{prop Roe} implies that the operator $b(D)$ defines a class
%\[
%[b(D)] \in K_1\bigl(D^*(M; L^2(E))^G/C^*(X; L^2(E))^G_{\loc}\bigr).
%\]
\begin{definition} \label{def loc index ell}
The \emph{equivariant coarse  index of $D$, localised at $Z$} is
\beq{eq loc index Z}
\ind_G^Z(D) := \ind_G^Z(b(D) \oplus 1) \quad \in K_*(C^*(X; Z, H_M)^G),
\eeq
where the index on the right hand side is as in Definition \ref{def loc index}.

If $Z/G$ is compact, then the  index \eqref{eq loc index Z} is by definition the \emph{localised equivariant coarse  index of $D$}, and denoted by
\[
\ind_G^{\loc}(D) := \ind_G^{\loc}(b(D) \oplus 1) \quad \in K_*(C^*_{\red}(G)).
\]
\end{definition}
As before, $\ind_G^Z(D)$ and $\ind_G^{\loc}(D)$ lie in even or odd $K$-theory depending on the presence of a grading.

The localised equivariant coarse index of an elliptic operator is the object we are most interested in here. It is a natural generalisation of the Baum--Connes analytic assembly map \cite{Connes94} from cocompact to non-cocompact actions; see Corollary \ref{cor loc index ass map}.

\subsection{Admissible and non-admissible modules} \label{sec non adm}

Let us clarify the relevance of using admissible modules in the definition of the equivariant coarse index. This will lead to
 an equivalent definition of the localised equivariant coarse index in the graded case,  \eqref{eq alt def loc ind} below.

The map $\oplus\,0$ preserves finite propagation, local compactness and having support near $Z$, and hence restricts to an injective $*$-homomorphism
\beq{eq incl plus 0}
\oplus\,0\colon C^*(M; Z, L^2(E))^G \to C^*(M; Z, H_M)^G.
\eeq
We denote the map induced on $K$-theory by $\oplus\,0$ as well.

Viewing $C^*(M; Z, L^2(E))^G$ as a subalgebra of $C^*(M; Z, H_M)^G$ via the map \eqref{eq incl plus 0}, 
we find that the map $\oplus\,1$ descends to a multiplicative but nonlinear map
\beq{eq plus 1 L2}
\oplus\,1\colon D^*(M; L^2(E))^G/C^*(M; L^2(E))^G \to D^*(M)^G/C^*(M; H_M)^G.
\eeq
This induces a homomorphism on odd $K$-theory, which we still denote by $\oplus\,1$.

%
%Analogously to \eqref{eq plus 1 L2}, the map $\oplus\,1$ in \eqref{eq def plus 01} induces
%\beq{eq plus1 Dstar}
%\oplus\,1\colon D^*(M; L^2(E))^G/C^*(M; Z, L^2(E))^G \to D^*(M)^G/C^*(M; Z, H_M)^G.
%\eeq
%
%
%
%Consider the homomorphisms
%\[
%\begin{split}
%\oplus\,0\colon K_0(C^*(M; Z, L^2(E))^G) &\to K_0(C^*(M; Z, H_M)^G);\\
%\oplus\,1\colon  K_1\bigl(D^*(M; L^2(E))^G/C^*(M; Z, L^2(E))^G\bigr) &\to K_1(D^*(M)^G/C^*(M; Z, H_M)^G)
%\end{split}
%\]
%induced by \eqref{eq incl plus 0} and \eqref{eq plus1 Dstar}, respectively.
%Let
%\[
%\ind_G^{\loc, L^2(E)}\colon K_1\bigl(D^*(M; L^2(E))^G/C^*(M; L^2(E))^G_{\loc}\bigr) \to K_0(C^*(M; L^2(E))^G_{\loc})
%\]
%be the boundary map in the six-term exact sequence associated to the ideal $C^*(X; L^2(E))^G_{\loc} \subset D^*(M; L^2(E))^G$.
\begin{lemma}\label{lem comm index}
The following diagram commutes:
\[
\xymatrix{
K_1\bigl(D^*(M; L^2(E))^G/C^*(M; Z, L^2(E))^G\bigr) \ar[r]^-{\partial} 
\ar[d]_-{\oplus\,1}& K_{0}(C^*(M; Z, L^2(E))^G)\ar[d]^-{\oplus\,0} \\
K_1(D^*(M)^G/C^*(M; Z, H_M)^G) \ar[r]^-{\partial} & K_{0}(C^*(M; Z, H_M)^G),
}
\]
where the maps $\partial$ are boundary maps in the respective six-term exact sequences.
\end{lemma}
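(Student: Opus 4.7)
The strategy is to factor the non-linear vertical map $\oplus\,1$ through the $*$-homomorphism $\oplus\,0$. The starting observation is the identity $\oplus\,1(T) = \oplus\,0(T) + (I - P)$, valid for all $T \in D^*(M;L^2(E))^G$, where $P$ denotes the projection of $H_M$ onto $j(L^2(E))$. Since $\oplus\,1(T) \in D^*(M)^G$ by hypothesis and $I - P = \oplus\,1(1) - \oplus\,0(1)$ is a bounded $G$-equivariant operator lying in $D^*(M)^G$, this identity shows that $\oplus\,0$ extends to a (non-unital) $*$-homomorphism $\oplus\,0 : D^*(M;L^2(E))^G \to D^*(M)^G$, restricting on ideals to the injection from (\ref{eq incl plus 0}).

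Applying naturality of the boundary map to the morphism of short exact sequences just described yields the commutative diagram
\[
\xymatrix{
K_1(D^*(M;L^2(E))^G/C^*(M;Z,L^2(E))^G) \ar[r]^-{\partial} \ar[d]_-{\oplus\,0_*} & K_0(C^*(M;Z,L^2(E))^G) \ar[d]^-{\oplus\,0_*} \\
K_1(D^*(M)^G/C^*(M;Z,H_M)^G) \ar[r]^-{\partial} & K_0(C^*(M;Z,H_M)^G).
}
\]
It then remains to show that $\oplus\,0_*$ and $\oplus\,1_*$ induce the same map on the top-left $K_1$ group.

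This last step, where the non-unitality of $\oplus\,0$ on the quotient has to be handled carefully, is what I expect to be the main obstacle. Write $A = D^*(M;L^2(E))^G/C^*(M;Z,L^2(E))^G$ and $B = D^*(M)^G/C^*(M;Z,H_M)^G$. The induced map $\oplus\,0_* : K_1(A) \to K_1(B)$ is defined via the unital extension to unitizations: for a unitary $u$ in a matrix algebra over $A$, the class $\oplus\,0_*[u]$ is represented by the unitary $\oplus\,0(u) + (1 - \oplus\,0(1_A))$ in the corresponding matrix algebra over $B$, completing the partial isometry $\oplus\,0(u)$ to a unitary. Since $\oplus\,0(1_A) = P$ modulo $C^*(M;Z,H_M)^G$, this unitary is precisely $\oplus\,0(u) + (I - P)$, which coincides with $\oplus\,1(u)$ modulo $C^*(M;Z,H_M)^G$ by the identity in the first paragraph. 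Hence $\oplus\,0_*[u] = [\oplus\,1(u)] = \oplus\,1_*[u]$, which together with the naturality diagram above proves the lemma.
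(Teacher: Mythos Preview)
Your argument is correct and takes a genuinely different route from the paper's proof. The paper writes down the explicit Whitehead-type formula for the boundary map,
\[
\partial[u] = \left[W\begin{pmatrix}1&0\\0&0\end{pmatrix}W^{-1}\right]-\left[\begin{pmatrix}1&0\\0&0\end{pmatrix}\right],
\]
applies it to both $u$ and $u\oplus 1$, and observes (after a short matrix computation) that on the orthogonal complement of $j(L^2(E))$ the two projections in the formula for $\partial[u\oplus 1]$ coincide and cancel, so what remains is exactly $\oplus\,0$ applied to the formula for $\partial[u]$. Your approach instead factors the problem through the genuine $*$-homomorphism $\oplus\,0$, invokes naturality of $\partial$, and then identifies $\oplus\,0_*$ with $\oplus\,1_*$ on $K_1$ via the standard unitization recipe for non-unital maps between unital algebras. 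This is cleaner and more conceptual: it isolates exactly why the non-linear map $\oplus\,1$ behaves well on $K$-theory, and avoids any explicit matrix manipulation. The paper's approach has the minor advantage of being self-contained and not needing $\oplus\,0$ to land in $D^*(M)^G$.

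One small point to tighten: you assert that $I-P\in D^*(M)^G$ because it is ``a bounded $G$-equivariant operator'', but that alone is not sufficient for membership in an arbitrary $D^*(M)^G$ satisfying the paper's hypotheses. What you actually need is $P\in D^*(M)^G$. This holds for the two concrete choices the paper mentions (the multiplier algebra and $D^*_{\red}(M)^G$), since $j$ intertwines the $C_0(M)$-actions, so $P=jj^*$ commutes with $C_0(M)$ and has propagation zero; hence $P$ is pseudolocal and multiplies the Roe algebra into itself. You should state this explicitly rather than leave it implicit.
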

\begin{proof}
	The boundary map can be described explicitly as follows. Suppose that $u$ is an invertible element in $D^*(M; L^2(E))^G/C^*(X; Z, L^2(E))^G$, and let $v$ be its inverse. Let $U$ and $V$ respectively be representatives of $u$ and $v$ in $D^*(M; L^2(E))^G$. Define
	\[
	W=\begin{pmatrix}1&U\\0&1\end{pmatrix}\begin{pmatrix}1&0\\-V&1\end{pmatrix}\begin{pmatrix}1&U\\0&1\end{pmatrix}\begin{pmatrix}0&-1\\1&0\end{pmatrix}.
	\]
	Then 
\[	
\partial[u] = 
\left[W\begin{pmatrix}1&0\\0&0\end{pmatrix}W^{-1}\right]-\left[\begin{pmatrix}1&0\\0&0\end{pmatrix}\right]\in K_0(C^*(M; Z, L^2(E))^G).
\]
Using the analogous description of $\ind_G([u]\oplus 1)$, one can explicitly prove the claim. 
%The argument for an   even $K$-theory class in the top left corner is similar. 
%	One computes directly, by manipulating matrices and using the definitions of $\oplus\,1 $ and $\oplus\,0$, that the diagram commutes.
\end{proof}

Let $D$ be as in Subsection \ref{sec loc ind ell}, and suppose that $E$ has a $\Z_2$ grading with respect to which $D$ is odd. Let
\[
\ind_G^{Z, L^2(E)}(D) := \partial [b(D)] \quad \in  K_0(C^*(M; Z, L^2(E))^G)
\]
be the image of $[b(D)]  \in K_1\bigl(D^*(M; L^2(E))^G/C^*(M; Z, L^2(E))^G\bigr)$ under the boundary map.
Lemma \ref{lem comm index} implies that the localised equivariant index of $D$ equals
\beq{eq alt def loc ind}
\ind_G^Z(D) = \ind_G^{Z, L^2(E)}(D) \oplus 0.
\eeq

\begin{example} \label{ex ind non adm}
The importance of using admissible modules in the definition of the (localised) equivariant coarse index is clear in the simplest case, where $G$ is compact, $M$ is a point, $E = V$, an irreducible representation of $G$  (with the trivial grading), and $D = 0_V$ is the zero operator on $V$. We take $Z = M$, so  the localised index equals the non-localised index. We have $L^2(E) = V$, and\footnote{
There is a technical subtlety in the case where $M$ is a finite set: finite-dimensionality of $V$ implies that it is not a standard module (see Definition \ref{def C0X module}). The conditions in Theorem \ref{thm exist adm} are not satisfied, since $G/K$ and $M/G$ are both finite sets. Furthermore, we now have $D^*(M; V)^G = C^*(M; V)^G$ if $D^*(M; V)^G$ is a subalgebra of $\cB(V)^G$. Then $D^*(M; V)^G/C^*(M; V)^G$ is the zero algebra. All of these issues can be solved by tensoring $V$ by $l^2(\N)$, see also Remark \ref{rem H inf dim}.} 
\beq{eq bD cpt}
[b(D)] = [0_V] \in K_1\bigl(D^*(M; V)^G/C^*(M; V)^G\bigr).
\eeq
%(\Todo: if $D^*(M; V)^G$ is a subalgebra of $\cB(V)$ then it needs to equal $C^*(M; V)^G_{\loc}$? What $D^*$-algebras should we use?)

In that case, Schur's lemma implies that
\[
C^*(M; V)^G = \End(V)^G = \C I_V,
\]
where $I_V$ is the identity operator on $V$. The map $j$ in \eqref{eq def j} is now given by
\[
j(v) = 1\otimes v,
\]
for $v \in V$, where $1$ is the constant function $1$ on $G$. The map
\[
\oplus\,0\colon C^*(M; V)^G =  \C I_V \to \bigoplus_{W \in \hat G} \End(W) = C^*(G)
\]
is given by the inclusion map $\C I_V \hookrightarrow \End(V)$. At the level of $K$-theory, the map $\oplus\,0$ is the map
\[
K_0(C^*(M; V)^G ) = \Z \to R(G) = K_0(C^*(G))
\]
mapping $k \in \Z$ to $k[V] \in R(G)$, the representation ring of $G$. The image under $\ind_G^V$ of the class \eqref{eq bD cpt} is the Fredholm index of $0_V$, which is $[V] \in K_0(\C I_V)$. Under the identification of this $K$-theory group with $\Z$, that class is mapped to $1$. Hence
\[
\ind_G^{\pt, V}(0_V) \oplus 0 = [V] \quad \in R(G).
\]
On the other hand, 
$
\ind_G(0_V \oplus 1)
$ 
is the equivariant Fredholm index of the operator $0_V \oplus 1$, which also equals $[V] \in R(G)$.

This example shows that:
\begin{enumerate}
\item we need to use an admissible module to obtain a single $K$-theory group $K_0(C^*(X)^G) = R(G)$ containing all localised, $G$-equivariant indices on $X$;
\item commutativity of the diagram in Lemma \ref{lem comm index}  means that $\ind_G^{L^2(E)}$, defined via a natural $C_0(X)$-module, but landing in a non-canonical $K$-theory group, determines the localised equivariant index with values in $K_0(C^*(G))$. In this example, $\ind_G^{\pt, V}(0_V)$ is just the integer $1$ when viewed as an element of $\Z = K_0(\C I_V)$, but the representation theoretic information about this index is encoded in the map $\oplus\,0$.
\end{enumerate}

%Furthermore,
%\[
%C^*(M)^G_{\loc} = \cK(L^2(G) \otimes V)^G,
%\]
%and we take
%\[
%D^*(M)^G = \cB(L^2(G) \otimes V)^G.
%\]
%The map $j$ in \eqref{eq def j} is now given by
%\[
%j(v) = 1\otimes v,
%\]
%for $v \in V$, where $1$ is the constant function $1$ on $G$. So the image of the class in $K_1\bigl(D^*(M; L^2(E))^G/C^*(M; L^2(E))^G_{\loc}\bigr)$ defined by the zero map $D$ under the map $\oplus 1$ is the class in $K_1(\cB(L^2(G) \otimes V)^G/\cK(L^2(G) \otimes V)^G)$ defined by the operator equal to zero on $j(V)$ and the identity on its orthogonal complement. The localised equivariant index of the class is the equivariant Fredholm index of that operator, which is $[V] \in R(G)$.

\end{example}

% OLd

%The operator $b(D)$ lies in the multiplier algebra of $C^*(M)^G_{\loc}$ (\Todo: why?) and in the $C^*$-algebra generated by $C^*(X)^G_{\loc}$ and $b(D)$. Taking $D^*(M)^G$ to be either of these algebras, Proposition \ref{prop Roe} allows us to apply Definition \ref{def loc index} to $b(D)$.
%\begin{definition} \label{def loc index D}
%The \emph{localised equivariant index} of $D$ is
%\[
%\ind_G(D) := \ind_G(b(D)) \quad \in K_0(C^*(G)),
%\]
%where the index on the right hand side is as in Definition \ref{def loc index}.
%\end{definition}

\subsection{Special cases} \label{sec special}
 
 Operators $D$ as in Subsection \ref{sec loc ind ell} occur naturally in at least three settings.
 
\subsubsection*{Callias-type operators}

First of all,  let $\tilde D$ be a Dirac-type operator on $E$. Let $\Phi$ be a $G$-equivariant vector bundle endomorphism of $E$ such that $\tilde D\Phi + \Phi \tilde D$ is a vector bundle endomorphism, and
\beq{eq Callias est}
\tilde D\Phi + \Phi \tilde D + \Phi^2 \geq c^2
\eeq
outside a cocompact subset $Z \subset M$, for a constant $c>0$. 
This can, for example,  be guaranteed by constructing $\Phi$ from projections in the Higson corona algebra as in \cite{Guo18}. (In that case, the pointwise norm of $\tilde D\Phi + \Phi \tilde D$ goes to zero at infinity, while the norm of $\Phi^2$ goes to one.) 
Then the \emph{Callias-type operator}
\beq{eq Callias}
D = D_{\Phi} := \tilde D + \Phi
\eeq
has the properties in Subsection \ref{sec loc ind ell}. 

If $G$ is the trivial group, i.e.\ in the non-equivariant case, index theory of these operators was studied and applied in various places \cite{Bunke95, Callias78, Kucerovsky01}. The coarse geometric viewpoint we develop in this paper could already be useful in the case of trivial groups. Possibly more useful in the non-equivariant setting is to consider the lift of a Callias-type operator $D_{\Phi}$ on a manifold $M$, with fundamental group $\Gamma$, to a $\Gamma$-equivariant Callias-type operator on the universal cover of $M$. The localised equivariant coarse index in $K_*(C^*_{\red}(\Gamma))$ of this lift is a more refined invariant than the Fredholm index of $D_{\Phi}$ itself. 
This could for example yield more refined obstructions to Riemannian metrics of positive scalar curvature.
In future work, we intend to show that the Fredholm index of $D_{\Phi}$ can be recovered from the localised equivariant coarse index of its lift  via an application of the von Neumann trace, or the summation trace in the context of maximal group $C^*$-algebras.

 In \cite{Cecchini16}, Callias-type index theory is extended to operators on bundles of Hilbert modules over $C^*$-algebras $A$, with indices in $K_*(A)$. If $A = C^*_{\red}(G)$ this seems related to the index we study here; in fact we suspect that the two coincide if $G$ is the fundamental group of $M/G$, and $M$ is its universal cover, and the Hilbert module bundle in question is constructed from the natural bundle $M \times_G C_{\red}^*(G) \to M/G$. In the general equivariant case, index theory of Callias-type operators was developed in \cite{Guo18}; we will see in Theorem \ref{thm Callias} that Definition \ref{def loc index} generalises the index of \cite{Guo18}.

 \subsubsection*{Positive curvature at infinity}
 
Secondly,  suppose that $D$ is a Dirac-type operator satisfying a Weitzenb\"ock-type formula
 \[
 D^2 = \nabla^*\nabla + R,
 \]
 for a vector bundle endomorphism $R$ satisfying $R \geq c^2$ outside $Z$. Then $D$ satisfies the conditions in Subsection \ref{sec loc ind ell}, and therefore has a well-defined index in $K_0(C^*_{\red}(G))$. The case where $G$ is trivial (so that $Z$ is compact) was studied by Gromov and Lawson \cite{Gromov83} and applied to questions about Riemannian metrics of positive scalar curvature. The case where $G$ is trivial and $Z$ may be noncompact was treated by Roe \cite{Roe16} using coarse geometry.

\subsubsection*{Manifolds with boundary}

Finally, let $\tilde M$ be a Riemannian manifold with boundary, on which $G$ acts properly, isometrically and cocompactly. Suppose that a neighbourhood $U$ of $\partial \tilde M$ is $G$-equivariantly and isometrically diffeomorphic to a collar $\partial \tilde M \times [0,\varepsilon)$. 
Let $\tilde E \to \tilde M$  be a $G$-equivariant, $\Z_2$-graded, Hermitian vector bundle, and a module over the Clifford bundle of $\tilde M$. Suppose that $\tilde E|_U \cong \tilde E|_{\partial \tilde M} \times [0,\varepsilon)$ as equivariant, Hermitian vector bundles.
Suppose that $\tilde D$ is a Dirac-type operator on $M$, and that on $U$ it is of the form 
\beq{eq D on U}
D|_U = \sigma \circ \bigl(\frac{\partial}{\partial t} + D_{\partial \tilde M}\bigr), 
\eeq
where $\sigma\colon \tilde E_+|_{\partial \tilde M} \to  \tilde E_-|_{\partial \tilde M}$ is an equivariant vector bundle isomorphism, $t$ is the coordinate in $[0,\varepsilon)$, and $D_{\partial \tilde M}$ is a Dirac operator on $\tilde E_+|_{\partial \tilde M}$.

Form $M$ by attaching a cylinder $\partial \tilde M \times [0,\infty)$ to $\tilde M$. Extend the Riemannian metric and the action by $G$ to $M$ in the natural way. Let $E \to M$ be the natural extension of $\tilde E$, and let $D$ be the extension of $\tilde D$ to $E$ equal to \eqref{eq D on U} on $\partial \tilde M \times [0,\infty)$.

Suppose that $D_{\partial \tilde M}$ is invertible. Then $D_{\partial \tilde M}^2 \geq c^2$ for some $c>0$, and $D^2 \geq c^2$ outside $Z = \tilde M$. Hence $D$ satisfies the conditions in Subsection \ref{sec loc ind ell}. The index of Definition \ref{def loc index ell} is now an equivariant Atiyah--Patodi--Singer type index for proper actions, and reduces to the original APS index if $G$ is trivial. An index theorem for this index is proved in \cite{HWW}. As in the case of Callias operators, a special case is the lift of an operator on a compact manifold with boundary to the universal cover, in which case one obtains a refinement of the Atiyah--Patodi--Singer index in the $K_*(C^*_{\red}(\pi))$, where $\pi$ is the fundamental group of the compact manifold.

%In all three cases described here, the case of trivial groups was treated in different places. As one application of one of our results, Theorem \ref{thm invar index}, we show how the index of \ref{def loc index D} refines the index in the trivial group case, besides just reducing to that case if $G$ is trivial. See Remark \ref{rem invar refine}.

\section{Results} \label{sec results}

%The coarse geometry approach to index theory of the kind of operators in Subsection \ref{sec loc ind ell} allows us to generalise and unify several earlier approaches to index theory for non-cocompact actions, including their applications.

 We will show that the index of Definition \ref{def loc index ell} generalises the equivariant index of Callias-type operators introduced in \cite{Guo18} (Theorem \ref{thm Callias}). As applications, we obtain results on existence and non-existence of Riemannian metrics of positive scalar curvature in Subsection \ref{sec psc}, and discuss a localised version of the Baum--Connes conjecture (Conjecture \ref{conj LBCC}).

\subsection{The equivariant Callias index} \label{sec Callias}

Suppose that $D = D_{\Phi}$ is a Callias-type operator as in \eqref{eq Callias}. 
%
%In addition, 
%assume that the endomophisms $\Phi^2$ and $\tilde D\Phi+\Phi \tilde D$ satisfy:
%\beq{eq ass Callias}
%\begin{split}
%\|\tilde D\Phi+\Phi \tilde D\|_m &\rightarrow 0;\\
%||\Phi^2||_m &\rightarrow 1
%\end{split}
%\eeq
%with norm taken at the point $m \in M$ in the limit $m\rightarrow\infty$ on $M$. 
%
Let $\mathcal{E}$ denote the Hilbert $C^*_{\red}(G)$-module defined by completing the space  $\Gamma_c^\infty(E)$ with respect to the $C_c(G)$-valued inner product
$$\langle s,t\rangle(g):=\langle s,gt\rangle_{L^2(E)}$$
and the right action of $C_c(G)$ defined by
$$s\cdot b:=\int_G g^{-1}(b(g)s)\,dg,$$
for $s_1, s_2 \in \Gamma_c^{\infty}(E)$ and $g \in G$. One can find a continuous, $G$-invariant, cocompactly supported function $f$ on $M$ such that $D_{\Phi}^2+f$ is invertible in the sense of the Hilbert $C^*_{\red}(G)$-modules $\mathcal{E}^j$ as in Definition 1 in  \cite{Guo18}. We can then form the normalised $G$-Callias-type operator
\beq{eq F}
F:=D_{\Phi}(D_{\Phi}^2+f)^{-1/2}.
\eeq
Then $F$ lies in the $C^*$-algebra  $\mathcal{L}(\mathcal{E})$ of  bounded adjointable operators  on $\mathcal{E}$. It was shown in Theorem 25 in \cite{Guo18} that $F$ is invertible modulo the algebra $\mathcal{K}(\mathcal{E})$ of compact operators on $\cE$,  and thus defines a class 
\[
[F]\in K_1(\mathcal{L}(\mathcal{E})/\mathcal{K}(\mathcal{E})).
\]
Here, as in \cite{Guo18}, we assume that $E$ is $\Z_2$-graded and $D_{\Phi}$ is odd with respect to the grading, and the above $K$-theory class is defined in terms of the even part of $F$, as in the second point in Definition \ref{def coarse index}.

Let
\beq{eq bdry Callias}
\partial\colon K_1(\mathcal{L}(\mathcal{E})/\mathcal{K}(\mathcal{E})) \to K_0(\mathcal{K}(\mathcal{E})) = K_0(C^*_{\red}(G))
\eeq
be the boundary map associated to the short exact sequence
\[
0 \to \mathcal{K}(\mathcal{E}) \to \mathcal{L}(\mathcal{E}) \to \mathcal{L}(\mathcal{E})/\mathcal{K}(\mathcal{E}) \to 0.
\]
In \eqref{eq bdry Callias}, we have used the Morita equivalence $\mathcal{K}(\mathcal{E}) \sim C^*_{\red}(G)$.

In \cite{Guo18}, the following index was constructed and applied.
\begin{definition} \label{def Callias index}
The \emph{equivariant Callias-index} of $D_{\Phi}$ is
\[
\ind^C_G(D_{\Phi}) := \partial[F] \in K_0(C^*_{\red}(G)).
\]
\end{definition}

One of our main results in this paper is that this index is a special case of the localised equivariant index. This gives a new approach to Callias index theory.
\begin{theorem}\label{thm Callias}
We have
\beq{eq thm Callias}
\ind_G^{\loc}(D_{\Phi}) = \ind_G^C(D_{\Phi}) \quad \in K_0(C^*_{\red}(G)).
\eeq
\end{theorem}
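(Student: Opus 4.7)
The plan is to realise both sides of \eqref{eq thm Callias} as images of compatible $K_1$-classes under boundary maps of six-term exact sequences that can be identified with one another. Using the equivalent description \eqref{eq alt def loc ind}, it suffices to compute $\ind_G^{Z, L^2(E)}(D_\Phi) \in K_0(C^*(M; Z, L^2(E))^G)$ and then push it forward via the $\oplus\,0$ inclusion, which by Lemma \ref{lem comm index} gives $\ind_G^{\loc}(D_\Phi)$. Comparing this with the Callias-type index $\ind_G^C(D_\Phi)$ then reduces to identifying the Hilbert-module picture of \cite{Guo18} with the $L^2(E)$-based equivariant Roe picture at the level of the underlying $C^*$-algebras and their quotients.

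The key technical input will be a natural $*$-isomorphism
\[
\Theta \colon \mathcal{K}(\mathcal{E}) \xrightarrow{\,\cong\,} C^*(M; Z, L^2(E))^G,
\]
sending a rank-one Hilbert-module operator $\theta_{s,t}$, for $s, t \in \Gamma_c^{\infty}(E)$, to the averaged integral operator $u \mapsto \int_G \langle gt, u\rangle_{L^2(E)}\, gs\, dg$ on $L^2(E)$. Such operators are manifestly $G$-equivariant, locally compact, and of finite propagation (controlled by $\supp(s) \cup \supp(t)$); I would show that their closed span exhausts $C^*(M; Z, L^2(E))^G$ via a cutoff argument together with the Schwartz-kernel description of Roe algebras used in the proof of Theorem \ref{thm Roe group Cstar}. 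The isomorphism $\Theta$ extends to enveloping multiplier/bounded-adjointable algebras, and under $\Theta$ the Morita equivalence $\mathcal{K}(\mathcal{E}) \sim C^*_{\red}(G)$ matches the stable isomorphism $C^*(M; Z, L^2(E))^G \sim C^*_{\red}(G)$ coming from Theorem \ref{thm Roe group Cstar}, so $\Theta_*$ is the identity on $K_0(C^*_{\red}(G))$.

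Next, I would prove that under $\Theta$ the classes $[F] \in K_1(\mathcal{L}(\mathcal{E})/\mathcal{K}(\mathcal{E}))$ and $[b(D_\Phi)] \in K_1\bigl(D^*(M; L^2(E))^G/C^*(M; Z, L^2(E))^G\bigr)$ correspond. Both $F$ and $b(D_\Phi)$ are self-adjoint functions of $D_\Phi$ whose squares differ from $1$ by elements of the appropriate ideals --- by Theorem 25 of \cite{Guo18} and by Proposition \ref{prop Roe}, respectively. Setting $b_f(x) := x(x^2+f)^{-1/2}$ so that $F = b_f(D_\Phi)$, the linear interpolation $b_t := (1-t) b + t b_f$ is, for every $t \in [0,1]$, an odd $[-1,1]$-valued function tending to $\pm 1$ at $\pm\infty$, so functional calculus produces a norm-continuous path of normalisations of $D_\Phi$ whose squares differ from $1$ by elements of the ideal. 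This gives a homotopy of invertibles in the quotient connecting $[b(D_\Phi)]$ to $[F]$, and naturality of the boundary map together with Lemma \ref{lem comm index} then yields \eqref{eq thm Callias}.

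The main obstacle will be the construction and verification of $\Theta$: while the correspondence between rank-one Hilbert-module operators and $G$-averaged integral operators is classical, showing that the Hilbert-module norm completion on the $\mathcal{K}(\mathcal{E})$ side coincides with the operator-norm completion inside $\cB(L^2(E))$ on the Roe side requires reconciling two a priori different topologies, and relies on cocompactness of $Z$ together with careful use of admissibility. A secondary technicality is choosing the ambient multiplier-type algebras on both sides so that $\Theta$ extends to them compatibly with $b(D_\Phi)$ and $F$; taking full multiplier algebras throughout should make this routine. Once these identifications are pinned down, the homotopy step is essentially mechanical.
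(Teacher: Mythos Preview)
Your structural plan --- identify the relevant ideals and quotients, then connect the two normalisations by a homotopy --- is the same as the paper's, and your proposed isomorphism $\Theta\colon \mathcal{K}(\mathcal{E}) \xrightarrow{\cong} C^*(M;Z,L^2(E))^G$ is a reasonable (and in some ways more direct) alternative to the paper's route through the admissible module $L^2(E)\otimes L^2(G)$ and the larger Hilbert module $\mathcal{E}_{C^*(G)}$. One imprecision: you invoke Theorem~\ref{thm Roe group Cstar} for the identification $C^*(M;Z,L^2(E))^G \sim C^*_{\red}(G)$, but that theorem is stated for \emph{admissible} modules, not for $L^2(E)$; you would need to check this compatibility directly, or route the identification through the map $\oplus\,0$ as in Lemma~\ref{lem comm index}.

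The real gap is in the homotopy. You write $b_f(x) = x(x^2+f)^{-1/2}$ and treat $F$ as $b_f(D_\Phi)$ via scalar functional calculus, but $f$ is a \emph{function on $M$}, not a constant, and it does not commute with $D_\Phi$. So $F$ is not a function of $D_\Phi$ alone, and the ``linear interpolation of normalising functions'' does not literally make sense. Even if one replaces this by the operator-level interpolation $F_t = (1-t)b(D_\Phi) + tF$, one must show $F_t^2 - 1$ lies in the localised ideal for all $t$; this requires $b(D_\Phi)F + Fb(D_\Phi) - 2$ to lie in the ideal, which is not automatic. Moreover, Proposition~\ref{prop Roe} needs $b^2 \equiv 1$ outside $[-c,c]$, a condition that $x\mapsto x(x^2+a)^{-1/2}$ fails for every $a>0$; for such $b$, the operator $b(D_\Phi)^2-1 = -(D_\Phi^2+a)^{-1}$ is \emph{not} supported near $Z$ and need not lie in $C^*(M;Z,L^2(E))^G$ or in $\mathcal{K}(\mathcal{E})$. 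So the homotopy step is not ``mechanical'' as you suggest.

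The paper deals with this by constructing a genuinely different homotopy $\tilde F_s = b_{s/2}(D_\Phi) + \zeta(s)\psi_{s/2}(D_\Phi)\Phi$ that explicitly involves the Callias potential $\Phi$ (not just $D_\Phi$), and establishes invertibility of each $\tilde F_s$ modulo $\mathcal{K}(\mathcal{E})$ by factoring it through the normalised Callias operator $(D_\Phi + \zeta(s)\Phi)\bigl((D_\Phi+\zeta(s)\Phi)^2 + f_s\bigr)^{-1/2}$ and invoking the Fredholm theory from \cite{Guo18}. The point is that invertibility modulo $\mathcal{K}(\mathcal{E})$ along the path comes from Callias-type estimates, not from a spectral support condition of the kind in Proposition~\ref{prop Roe}.
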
 
This result is proved in Section \ref{sec pf Callias}. 

If $M/G$ is compact, then we may take $\Phi = 0$, and  $\ind_G^C$ equals the analytic assembly map \cite{Connes94}.  Therefore, Theorem \ref{thm Callias} has the following immediate consequence.
\begin{corollary} \label{cor loc index ass map}
If $M/G$ is compact, then the localised equivariant index of an elliptic operator is its image under the analytic assembly map.
\end{corollary}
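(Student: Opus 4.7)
The plan is to deduce the corollary directly from Theorem \ref{thm Callias} by verifying that, when $M/G$ is compact, the equivariant Callias index of Definition \ref{def Callias index} specialises to the analytic assembly map of \cite{Connes94}. There are two things to check: first, that the Callias-type construction of Subsection \ref{sec Callias} actually applies with $\Phi = 0$ in the cocompact setting; second, that the resulting class $\partial[F] \in K_0(C^*_{\red}(G))$ agrees with the standard Baum--Connes assembly of $[D]$.

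For the first point, recall that the role of $\Phi$ is to guarantee the positivity estimate \eqref{eq Callias est} outside a cocompact subset $Z \subset M$. When $M/G$ is already compact, we may take $Z = M$, so the estimate is vacuous and $\Phi = 0$ is permissible. Then $D_\Phi = \tilde D = D$, and the $Z = M$ case of Proposition \ref{prop Roe} is trivial, so the hypotheses of Definitions \ref{def loc index ell} and \ref{def Callias index} are both satisfied and $\ind_G^{\loc}(D)$ and $\ind_G^C(D)$ are defined.

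For the second point, I would unwind the construction of $[F]$ in Subsection \ref{sec Callias}. The Hilbert $C^*_{\red}(G)$-module $\cE$ obtained by completing $\Gamma_c^\infty(E)$ with respect to the $C_c(G)$-valued inner product is exactly the module used by Kasparov and Baum--Connes to define the assembly map in the cocompact setting, and the normalised operator $F = D(D^2 + f)^{-1/2}$, for a cocompactly supported cutoff $f$, is a standard bounded transform representative of the Kasparov class $[D] \in KK^G_*(C_0(M), \C)$ paired with $[c] \in KK^G(\C, C_0(M))$ for the cutoff. The class $\partial[F] \in K_0(\mathcal{K}(\cE)) = K_0(C^*_{\red}(G))$ obtained from the boundary map of the short exact sequence $0 \to \mathcal{K}(\cE) \to \mathcal{L}(\cE) \to \mathcal{L}(\cE)/\mathcal{K}(\cE) \to 0$ is therefore, by construction, the image of $[D]$ under $\mu^G_{\red}$. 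Granting this identification, $\ind_G^C(D) = \mu^G_{\red}[D]$, and combining with Theorem \ref{thm Callias} yields $\ind_G^{\loc}(D) = \mu^G_{\red}[D]$, which is the statement of the corollary.

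The only nontrivial step is the identification $\ind_G^C(D) = \mu^G_{\red}[D]$ in the cocompact case; this is a matter of matching conventions between the Hilbert module picture used in \cite{Guo18} and the standard Kasparov/Baum--Connes description of the assembly map, and has essentially been carried out in \cite{Guo18} (where the equivariant Callias index is designed to extend the assembly map). I would therefore appeal to that identification rather than reprove it, and present the corollary as an immediate consequence of Theorem \ref{thm Callias}.
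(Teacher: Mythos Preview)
Your proposal is correct and follows exactly the paper's approach: the paper deduces the corollary immediately from Theorem \ref{thm Callias} by noting that when $M/G$ is compact one may take $\Phi = 0$, in which case $\ind_G^C$ coincides with the analytic assembly map (with reference to \cite{Guo18} and \cite{Connes94}). Your write-up simply expands on this one-line argument.
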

Note that if $M/G$ is compact, then the localised equivariant coarse index equals the usual equivariant coarse index.
In the case of discrete groups, Corollary \ref{cor loc index ass map} is the well-known fact that the equivariant coarse index for a proper action equals the analytic assembly map for such groups \cite{Roe02}.

\subsection{Positive scalar curvature} \label{sec psc}

In the second special case in Subsection \ref{sec special}, if $R$ is uniformly positive, i.e.\ $Z = \emptyset$, then its localised coarse index vanishes by standard arguments. Thus $\ind_G^{\loc}(D) \in K_*(C^*_{\red}(G))$ is an obstruction to $G$-invariant Riemannian metrics of positive scalar curvature. There are many techniques for extracting more concrete, numerical obstructions from this $K$-theory class, such as pairing with traces and higher cyclic cocycles on (smooth subalgebras of) $C^*_{\red}(G)$.

In the case where $Z/G$ is non-compact, the localised equivariant coarse index allows us to use
the following method to find obstructions to $G$-invariant Riemannian metrics of positive scalar curvature. This generalises the comments at the start of Section 3 in \cite{Roe16}.
\begin{proposition} \label{prop psc}
Let $M$ be a complete Riemannian manifold, with a proper, isometric action by a locally compact group $G$. %Suppose that there is a cocompact subset $Z \subset M$ such that the inclusion map induces the zero map  $K_*(C^*(Z)^G) \to K_*(C^*(M)^G)$ on $K$-theory.
 Let $D$ be a Dirac-type operator whose curvature term $R$ in the Weitzenb\"ock-type formula $D^2 = \Delta + R$ is uniformly positive outside a $G$-invariant subset $Z \subset M$, for which the inclusion map $C^*(M; Z)^G\to C^*(M)^G$ induces the zero map on $K$-theory, with respect to an admissible $C_0(M)$-module $H_M$ and its restriction $H_Z:=\one_Z H_M$.  Then $\ind_G(D)=0$. 
\end{proposition}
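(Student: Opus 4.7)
The strategy is to factor the global coarse index through the localised one via $\ind_G(D) = \iota_* \ind_G^Z(D)$, where $\iota_*\colon K_*(C^*(M; Z, H_M)^G) \to K_*(C^*(M; H_M)^G)$ is induced by the inclusion of ideals; the vanishing of $\iota_*$ hypothesised in the proposition then forces $\ind_G(D) = 0$.

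First I would verify that $\ind_G^Z(D)$ is defined. For $s \in \Gamma_c^\infty(E)$ supported in $M \setminus Z$, the Weitzenböck formula yields
$$\|Ds\|_{L^2}^2 = \|\nabla s\|_{L^2}^2 + \langle Rs, s\rangle \geq c^2 \|s\|_{L^2}^2,$$
which is precisely the uniform lower bound required in Subsection \ref{sec loc ind ell}. Proposition \ref{prop Roe} then guarantees that for any odd $b \in C^\infty(\R)$ with $b|_{\R \setminus (-c, c)} \in \{\pm 1\}$, the operator $b(D)^2 - 1$ lies in $C^*(M; Z, L^2(E))^G$, so Definition \ref{def loc index ell} produces $\ind_G^Z(D) = \partial[b(D) \oplus 1] \in K_*(C^*(M; Z, H_M)^G)$.

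The factorisation step uses naturality of the boundary map for the nested ideals $C^*(M; Z, H_M)^G \subset C^*(M; H_M)^G \subset D^*(M)^G$, yielding the commutative square
$$
\xymatrix{
K_{*+1}(D^*(M)^G/C^*(M; Z, H_M)^G) \ar[r]^-{\partial} \ar[d] & K_*(C^*(M; Z, H_M)^G) \ar[d]^-{\iota_*} \\
K_{*+1}(D^*(M)^G/C^*(M; H_M)^G) \ar[r]^-{\partial} & K_*(C^*(M; H_M)^G).
}
$$
Since any $b$ admissible for the localised index is a fortiori a normalising function of the type used in Subsection \ref{sec ind ell}, the image of $[b(D) \oplus 1]$ along the lower row equals $\ind_G(D)$ by Definition \ref{def coarse index ell} together with the standard independence of the coarse index from the choice of normalising function. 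Commutativity then gives $\ind_G(D) = \iota_* \ind_G^Z(D)$, which vanishes by hypothesis.

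The main technical point is the independence of $\ind_G$ on the choice of normalising function; this is routine, as any two normalising functions are joined by a convex-combination homotopy inducing a path of invertibles in the quotient $D^*(M)^G/C^*(M; H_M)^G$, hence the same $K_1$-class. I would also note that the argument is insensitive to whether the underlying admissible module $H_M$ is used directly or via its restriction $H_Z = \one_Z H_M$, since $\iota_*$ as formulated in the hypothesis factors through both viewpoints equivalently.
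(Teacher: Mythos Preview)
Your proposal is correct and follows essentially the same approach as the paper: both arguments show that the localised index $\ind_G^Z(D)$ is defined and that the global index is its image under the inclusion-induced map $\iota_*$, which vanishes by hypothesis. Your version is simply more explicit, spelling out the Weitzenb\"ock estimate, the naturality square for the nested ideals, and the independence of the normalising function, all of which the paper leaves implicit.
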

When $Z$ is cocompact, it is clear that the inclusion
\[
K_*(C^*(M; Z,H_M)^G)= K_*(C^*(Z,\one_Z H_M)^G).
\]
induces the identity map on $K$ theory. More generally, we expect that, as in the discrete group case,\footnote{See the forthcoming book \cite{WillettYu}.} the equivariant Roe algebras of coarsely equivalent spaces have canonically isomorphic $K$-theory, and hence that this identity holds for general $Z$ (see e.g.\ Lemma 1 in Section 5 of \cite{HRY93}, or Proposition 6.4.7 in \cite{Higson00} for the non-equivariant case.) Then the condition on the set $Z$ in the above proposition is satisfied for example if $Z$ is contained in a subset $Y \subset M$ such that $K_*(C^*(Y)^G)=0$. In future work, we aim to prove index theorems that allow us to deduce concrete topological obstructions to positive scalar curvature from Proposition \ref{prop psc}.

We now turn to an existence result. Recall the following theorem  from \cite{Abels}, which we need only for Lie groups:
	\begin{theorem}[Abels] If $M$ is a proper $G$-manifold, where $G$ is an almost connected Lie group, 
	then there exists a global slice $N$ which is a $K$-manifold, in the $G$-manifold $M$, where $ K$ is a maximal compact subgroup of $G$. 
	\end{theorem}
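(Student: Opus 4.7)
The strategy I would follow is the classical one for Abels--Palais type slice theorems: reduce the construction of a global slice to the construction of a $G$-equivariant smooth map $\phi \colon M \to G/K$, and then take $N = \phi^{-1}(eK)$. Given such a $\phi$, the map $G \times_K N \to M$ sending $[g,n] \mapsto gn$ is a diffeomorphism because $\phi$ is equivariant: for any $m \in M$, properness gives a compact stabiliser, and writing $\phi(m) = gK$ shows that $g^{-1}m \in N$ and that this presentation is unique modulo $K$. Smoothness of the inverse follows from transversality of $\phi$ to $eK \in G/K$ along $N$, which in turn follows from equivariance combined with the fact that $G$ acts transitively on $G/K$.

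The heart of the proof, therefore, is to produce the equivariant map $\phi$. My plan would be to exploit two classical facts about almost connected Lie groups. First, the Cartan--Iwasawa--Malcev theorem: every compact subgroup of $G$ is conjugate into $K$. Since the action is proper, every stabiliser $G_m$ is compact, so at the level of each orbit $G \cdot m \cong G/G_m$ there is (after a choice of conjugator) an equivariant map $G/G_m \to G/K$. Second, the symmetric space $G/K$, equipped with a $G$-invariant Riemannian metric, has non-positive sectional curvature, so any bounded finite collection of points in $G/K$ admits a unique Riemannian centre of mass (Cartan barycentre) depending smoothly and equivariantly on the input.

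With these in hand, the construction proceeds as follows. First, use the standard slice theorem for the compact group $K$ (or, via properness, the Palais slice theorem for proper actions of Lie groups with compact isotropy) to cover $M$ by $G$-invariant tubular neighbourhoods $U_\alpha \cong G \times_{H_\alpha} S_\alpha$, with $H_\alpha$ compact. After conjugating $H_\alpha$ into $K$, each such tube carries a natural equivariant map $\phi_\alpha \colon U_\alpha \to G/K$. Second, pick a $G$-invariant smooth partition of unity $\{\chi_\alpha\}$ subordinate to $\{U_\alpha\}$ (available by averaging a $K$-invariant partition on $M/G$ and using properness). Finally, glue the local maps by setting $\phi(m)$ equal to the weighted Cartan barycentre in $G/K$ of the points $\phi_\alpha(m)$ with weights $\chi_\alpha(m)$. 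Non-positive curvature of $G/K$ ensures that this barycentre exists, is unique, and depends smoothly and equivariantly on $m$.

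The main obstacle, as always in such arguments, is Step two: arranging the local equivariant maps and partition of unity so that the barycentre is well defined and smooth globally. The delicate points are (a) compatibility of the conjugators used to embed different stabilisers into $K$, which one handles by noting that the barycentre construction is insensitive to the choice and uses only the images in $G/K$; (b) local finiteness and $G$-invariance of the cover and partition of unity, which uses paracompactness of $M/G$ guaranteed by properness; and (c) ensuring that the barycentre computation stays in a region of $G/K$ where the exponential map is a diffeomorphism, which is automatic pointwise and extends smoothly by compactness of the support of each $\chi_\alpha$ modulo $G$. Once $\phi$ is in place, the remainder of the argument is formal.
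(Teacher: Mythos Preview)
The paper does not give its own proof of this statement: it is quoted verbatim as Abels' theorem and attributed to \cite{Abels}, with no argument supplied. So there is nothing in the paper to compare your proposal against.

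That said, your sketch is essentially the standard proof (and is close to Abels' original argument): reduce to building a $G$-equivariant smooth map $M \to G/K$, then take the fibre over $eK$ as the slice; construct the map locally on tubes $G \times_{H_\alpha} S_\alpha$ using the Cartan--Iwasawa--Malcev theorem to push each compact isotropy group into $K$, and patch the local maps using the Cartan barycentre on the nonpositively curved symmetric space $G/K$ with weights coming from a $G$-invariant partition of unity. Your remark that equivariance forces $\phi$ to be a submersion (since the orbit directions already surject onto $T(G/K)$) is the right way to see that $N=\phi^{-1}(eK)$ is a smooth submanifold and that $G\times_K N \to M$ is a diffeomorphism. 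The only point I would tighten is your phrasing about the barycentre ``staying in a region where the exponential map is a diffeomorphism'': on a complete simply connected nonpositively curved manifold the exponential is a global diffeomorphism, so no locality hypothesis is needed there --- the barycentre of any finite weighted set exists and is unique, full stop. With that adjustment your outline is correct and matches the literature proof the paper is citing.
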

By this theorem, $M$ is $G$-equivariantly diffeomorphic to $G\times_K N$.
	%One of our main theorems in this ssubection is the following.
	\begin{theorem}\label{thm:induction}
		Let $G$ be an almost connected Lie group, and let $K$ be a maximal compact subgroup of $G$. If $N$ is a bounded geometry manifold with a $K$-invariant Riemannian metric of uniform positive scalar curvature, then  $M=G\times_K N$ is a bounded geometry manifold with a $G$-invariant Riemannian metric of  uniform positive scalar curvature.
	\end{theorem}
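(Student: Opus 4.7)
The plan is to build a $G$-invariant Riemannian metric on $M = G \times_K N$ by descending a product metric on $G \times N$, together with a scaling parameter that lets us exploit the fibrewise positive scalar curvature of $N$. Choose a $K$-invariant inner product on $\mathfrak{g}$ for which $\mathfrak{k}$ is orthogonal to an $\mathrm{Ad}_K$-invariant complement $\mathfrak{p}$, and rescale the $\mathfrak{p}$-part by $t^2$ for a parameter $t > 0$, obtaining a left-$G$, right-$K$-invariant Riemannian metric $g_G^{(t)}$ on $G$. The product metric $g_G^{(t)} \oplus g_N$ on $G \times N$ is invariant under both left multiplication by $G$ and the free, proper right $K$-action $(g, n) \cdot k = (gk, k^{-1} n)$, so it descends to a $G$-invariant Riemannian metric $g_M^{(t)}$ on $M$. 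Bounded geometry of $(M, g_M^{(t)})$ follows from $G$-invariance, which reduces all local curvature and injectivity-radius invariants to the orbit space $M/G \cong N/K$, together with bounded geometry of $N$ and compactness of $K$.

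To prove uniform positive scalar curvature, I would realize $q\colon M \to G/K$ as a Riemannian submersion with fiber $N$ (the identification of horizontal space with $\mathfrak{p}$-directions makes this hold) and apply O'Neill's formula, which reads schematically
\[
s_{M,t} = q^* s_{G/K, t} + s_N - |A_t|^2 - |T_t|^2 + (\text{mean-curvature corrections}).
\]
Here $G/K$ is homogeneous under $G$, so $s_{G/K, t}$ is a constant decaying as $1/t^2$. A direct calculation at $[e,n]$ expresses $|A_t|^2$ as a sum of squared norms of fundamental vector fields of the $K$-action on $N$ (generated by the $\mathfrak{k}$-components of brackets $[X_i, X_j]$ for an ON basis $\{X_i\}$ of $\mathfrak{p}$), with an overall factor $1/t^4$; the remaining tensor and mean-curvature terms admit analogous $t$-decay. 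Since $s_N \ge c > 0$ uniformly by hypothesis, for $t$ sufficiently large these correction terms are pointwise dominated by $s_N$, giving $s_{M,t} \ge c/2$ at each point of $M$.

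The main obstacle is upgrading this pointwise estimate to a uniform lower bound over all of $M$. The $A$-tensor correction involves norms $|Z \cdot n|_{g_N}$ of $K$-Killing fields on $N$, which need not be uniformly bounded on $N$ even under the bounded-geometry hypothesis --- for instance, linear $SO(k)$-actions on a Euclidean factor of $N$ produce Killing fields of unbounded length. In such cases the estimate $|A_t|^2 \le C(n)/t^4$ is only pointwise, with no $n$-independent constant $C$, so no single $t$ yields a uniform bound. A likely resolution is to replace the uniform scaling by a warped construction, using a smooth $K$-invariant function $\phi \colon N \to (0, \infty)$ in place of the constant $t$, chosen to grow rapidly enough to dominate the relevant Killing-field norms while keeping $|\nabla \log \phi|$ small; the additional warping terms in the scalar curvature formula can then be controlled using the bounded geometry of $N$. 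Verifying that such a $\phi$ exists in full generality, and that the warped metric remains $G$-invariant with uniformly positive scalar curvature, is the technical heart of the proof.
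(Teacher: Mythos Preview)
Your overall plan---realize $M\to G/K$ as a Riemannian submersion with fibre $N$, apply an O'Neill-type scalar curvature formula, and introduce a scaling parameter---matches the paper's. But the paper makes two choices that dissolve exactly the obstacle you identify, so the warped-product fix you propose is not needed.

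First, the paper invokes an equivariant version of Vilms' theorem (proved in \cite{GMW}) to produce a $G$-invariant metric on $M$ for which the fibres are \emph{totally geodesic}. This kills the $T$-tensor and the mean-curvature terms in one stroke, leaving only the $A$-tensor correction in Kramer's formula
\[
\kappa_M(p)=\kappa_{G/K}+\kappa_N(p)-\sum_{i,j}\|A_{X_i}(X_j)\|_p.
\]
Second---and this is the point you are missing---the paper does not try to bound the $A$-tensor directly via Killing fields of the $K$-action on $N$. Instead it argues that $(M,g_M)$ itself has bounded geometry, hence $\kappa_M$ is uniformly bounded; since $\kappa_{G/K}$ is a constant and $\kappa_N$ is uniformly bounded by the bounded-geometry hypothesis on $N$, the displayed formula forces $\sup_p\sum_{i,j}\|A_{X_i}(X_j)\|_p\le A_0<\infty$. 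The unbounded-Killing-field scenario you worry about is thus bypassed by reading the $A$-bound off the scalar curvature identity rather than computing it from the action.

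Finally, the paper scales the \emph{fibre} metric by $t\to 0$ (so that $\kappa_N$ becomes $t^{-2}\kappa_N\ge t^{-2}\kappa_0$), rather than scaling the base as you do. With the uniform bound $A_0$ already in hand, one gets
\[
\kappa_M(p)\ge \kappa_{G/K}+t^{-2}\kappa_0-A_0>0
\]
for all sufficiently small $t$, uniformly in $p$. Your base-scaling would also work once the $A$-bound is obtained this way, but the fibre-scaling makes the dominant term explicit. In short: use bounded geometry of $M$ to get the uniform $A$-bound indirectly, and the warping construction becomes unnecessary.
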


\subsection{A localised Baum--Connes conjecture}

The Baum--Connes conjecture \cite{Connes94} describes $K_*(C^*_{\red}(G))$ in terms of equivariant indices of elliptic operators for cocompact actions by $G$. The surjectivity part of this conjecture  is a particularly hard problem. Using the localised equivariant index of Definition \ref{def loc index}, we will formulate a localised version of Baum--Connes surjectivity. We show that this is implied by Baum--Connes surjectivity in the usual sense (Proposition \ref{prop LBCC} below). It is therefore a weaker statement - and potentially easier to prove because one is allowed to use equivariant indices for \emph{non-cocompact} actions - but which nevertheless describes the group $K_*(C^*_{\red}(G))$.

Let $D^*_{\red}(X)^G$ be the algebra defined in Subsection \ref{sec loc ind}.
\begin{definition}
The \emph{localised equivariant $K$-homology} of $X$ is
\[
K_*^G(X)_{\loc} := K_{*+1}(D^*_{\red}(X)^G/C^*(X)^G_{\loc}).
\]
\end{definition}
This terminology is motivated by Paschke duality (see e.g.\ page 85 of \cite{Roe03} and Theorem 8.4.3 in \cite{Higson00}), which implies that $K_*^G(X)_{\loc}$ equals the usual equivariant $K$-homology of $X$ in the opposite degree, if $X/G$ is compact. 
The index of Definition \ref{def loc index} defines a localised equivariant index
\[
\ind_G^{\loc} \colon K_*^G(X)_{\loc} \to K_*(C^*_{\red}(G)).
\]

Now let $\underline{E}G$ be a universal example for proper $G$-actions \cite{Connes94}. 
\begin{conjecture}[Localised Baum--Connes surjectivity]\label{conj LBCC}
The map
\[
\ind_G^{\loc}\colon K_*^G(\underline{E}G)_{\loc}\to K_*(C^*_{\red}(G))
\]
is surjective.
\end{conjecture}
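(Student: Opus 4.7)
The conjecture is a statement to be proved under an auxiliary hypothesis. The plan is to show that ordinary Baum--Connes surjectivity implies the localised version, by constructing a natural map
\[
\alpha \colon K_*^G(\underline{E}G) \to K_*^G(\underline{E}G)_{\loc}
\]
such that $\ind_G^{\loc} \circ \alpha$ equals the analytic assembly map $\mu$. Once such an $\alpha$ is in hand, surjectivity of $\mu$ forces surjectivity of $\ind_G^{\loc}$, since $\ind_G^{\loc}\bigl(\alpha(y)\bigr)=\mu(y)$ for every $y$.

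First, recall that $K_*^G(\underline{E}G) = \varinjlim_Y K_*^G(Y)$, where the limit is over $G$-invariant cocompact closed subsets $Y \subset \underline{E}G$. For each such $Y$, cocompactness gives $C^*(Y)^G = C^*(Y)^G_{\loc}$, so Paschke duality identifies $K_*^G(Y)$ with $K_*^G(Y)_{\loc}$. I will then build $\alpha_Y \colon K_*^G(Y)_{\loc} \to K_*^G(\underline{E}G)_{\loc}$ induced by the inclusion $Y \hookrightarrow \underline{E}G$: an admissible $C_0(Y)$-module $H_Y$ embeds $G$-equivariantly into an admissible $C_0(\underline{E}G)$-module $H_{\underline{E}G}$ via $H_Y = \one_Y H_{\underline{E}G}$, and finite-propagation operators on $H_Y$ extend by zero into $C^*(\underline{E}G; Y)^G = C^*(\underline{E}G)^G_{\loc}$, while elements of $D^*_{\red}(Y)^G$ extend by the identity on $\one_{\underline{E}G\setminus Y}H_{\underline{E}G}$ into $D^*_{\red}(\underline{E}G)^G$. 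These extensions induce a morphism of the defining short exact sequences of $K_*^G(Y)_{\loc}$ and $K_*^G(\underline{E}G)_{\loc}$, and the naturality of boundary maps gives a commutative square involving the indices. Passing to the limit over $Y$ defines $\alpha$.

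Next, I will establish the compatibility $\ind_G^{\loc} \circ \alpha_Y = \mu_Y$ on each $K_*^G(Y)$. The key point is that by Corollary \ref{cor loc index ass map}, for cocompact $Y$ the localised equivariant coarse index on $Y$ coincides with the Baum--Connes assembly map $\mu_Y\colon K_*^G(Y) \to K_*(C^*_{\red}(G))$. The extension-by-zero map $C^*(Y)^G \to C^*(\underline{E}G)^G_{\loc}$ induces the canonical isomorphism on $K$-theory (both sides being identified with $K_*(C^*_{\red}(G)) \otimes \cK$ via \eqref{eq red Roe cocpt} and \eqref{eq loc Roe group Cstar} up to Morita equivalence, which is compatible by construction with the admissible module embedding above). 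Naturality of the index construction in Definition \ref{def loc index} with respect to morphisms of the pair $(D^*,C^*)$ then gives $\ind_G^{\loc}\circ\alpha_Y = \ind_G(\text{on }Y) = \mu_Y$. Assembling these over $Y$ and using compatibility of $\mu$ with the directed limit yields $\ind_G^{\loc}\circ\alpha = \mu$.

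The main obstacle is the careful verification that the admissible modules on $Y$ and on $\underline{E}G$ can be chosen compatibly so that $H_Y = \one_Y H_{\underline{E}G}$, and that the induced isomorphism $K_*(C^*(Y)^G) \cong K_*(C^*(\underline{E}G)^G_{\loc})$ really is the canonical Morita-equivalence identification with $K_*(C^*_{\red}(G))$. This is a compatibility check at the level of the kernel-theoretic description of Roe algebras developed in Subsection \ref{sec kernels} and Theorem \ref{thm Roe group Cstar}; once it is made, the rest of the argument is formal from the constructions already given. A secondary, milder technical point is to check that the extension by the identity on $H_{\underline{E}G} \ominus H_Y$ of an element of $D^*_{\red}(Y)^G$ really lies in $D^*_{\red}(\underline{E}G)^G$ and not merely its multiplier algebra, which follows from the finite-propagation and local-compactness definitions together with admissibility.
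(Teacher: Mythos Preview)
The statement you are addressing is a \emph{conjecture}; the paper does not prove it, and neither do you. What you have actually written is a proof outline for Proposition~\ref{prop LBCC}, namely that ordinary Baum--Connes surjectivity implies Conjecture~\ref{conj LBCC}. Your opening sentence (``a statement to be proved under an auxiliary hypothesis'') makes this conflation explicit: the conjecture carries no auxiliary hypothesis, and reducing it to Baum--Connes surjectivity does not settle it.

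Granting that you are really proving Proposition~\ref{prop LBCC}, your approach is essentially the same as the paper's in Section~\ref{sec appl}: construct, for each cocompact $Y\subset\underline{E}G$, a map $K_*^G(Y)\to K_*^G(\underline{E}G)_{\loc}$ via the inclusion $H_Y=\one_Y H_{\underline{E}G}$ of admissible modules, check by naturality of boundary maps and Corollary~\ref{cor loc index ass map} that it intertwines $\mu_Y$ with $\ind_G^{\loc}$, and pass to the direct limit. The only cosmetic difference is that the paper extends operators in $D^*_{\red}(Y)^G$ by zero (the map $\varphi_Z^X$ of \eqref{eq phi incl B}), giving an honest $*$-homomorphism on the quotient, whereas you extend by the identity, which is the $\oplus\,1$ device of Subsection~\ref{sec non adm}; both induce the same map on $K_1$. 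Your flagged ``main obstacle'' about compatibility of Morita identifications is exactly what the paper handles through the direct-limit description \eqref{eq loc Roe inj lim} and Theorem~\ref{thm Roe group Cstar}, so nothing new is needed there.
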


Recall that the \emph{representable equivariant $K$-homology} of $X$ is
\[
RK^G_*(X) := \varinjlim_{Z} K^*_G(Z),
\]
where $Z$ runs over the $G$-invariant closed subsets of $X$ such that $Z/G$ is compact. The {Baum--Connes conjecture}  is the statement that the \emph{analytic assembly map}
\[
\mu_G\colon
RK^G_*(\underline{E}G)\to K_*(C^*_{\red}(G))
\]
is bijective.
\begin{proposition}\label{prop LBCC}
Surjectivity of $\mu_G$ implies Conjecture \ref{conj LBCC}.
\end{proposition}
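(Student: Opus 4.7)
The plan is to construct a natural factorisation of the analytic assembly map through the localised equivariant index, and then invoke Corollary \ref{cor loc index ass map}. More precisely, I will build a map
\[
\varphi\colon RK^G_*(\underline{E}G) \to K_*^G(\underline{E}G)_{\loc}
\]
such that $\ind_G^{\loc}\circ \varphi = \mu_G$; surjectivity of $\mu_G$ then forces surjectivity of $\ind_G^{\loc}$.

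The first step is to fix an admissible $C_0(\underline{E}G)$-module $H$. For each closed $G$-invariant subset $Y\subset \underline{E}G$ with $Y/G$ compact, the restriction $\one_Y H$ is an admissible $C_0(Y)$-module by Definition \ref{def adm mod}, so Theorem \ref{thm Roe group Cstar} applies to both $C^*(Y)^G$ and $C^*(\underline{E}G; Y, H)^G = C^*(\underline{E}G)^G_{\loc}$. Extending an operator $T$ on $\one_Y H$ by zero on $(\one_Y H)^\perp$ sends finite-propagation operators to operators supported near $Y$, and locally compact operators to locally compact ones, yielding $*$-homomorphisms
\[
C^*(Y)^G \hookrightarrow C^*(\underline{E}G)^G_{\loc}, \qquad D^*_{\red}(Y)^G \hookrightarrow D^*_{\red}(\underline{E}G)^G,
\]
the second by the description of $D^*_{\red}$ in Remark \ref{rem Dstarred}. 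These are compatible with the two-sided ideal inclusions, so they descend to a map of quotients and hence a map on the associated $K_{*+1}$. Invoking Paschke duality (equivariant version, as in Theorem 8.4.3 of \cite{Higson00}) identifies $K_{*+1}(D^*_{\red}(Y)^G/C^*(Y)^G)$ with $K^G_*(Y)$ for $Y/G$ compact, and the target with $K^G_*(\underline{E}G)_{\loc}$ by definition. The constructed maps are compatible with inclusions $Y\subset Y'$ of cocompact sets, so passing to the direct limit produces $\varphi$.

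The second step is to verify that $\ind_G^{\loc}\circ \varphi = \mu_G$. On each cocompact piece $K^G_*(Y)$, an element is represented by a class $[F] \in K_{*+1}(D^*_{\red}(Y)^G/C^*(Y)^G)$ coming from an elliptic operator $D$ on $Y$ (or on a $G$-equivariant bundle over $Y$). Applying the boundary map of the short exact sequence for $Y$ gives $\mu_G([D])$ by Corollary \ref{cor loc index ass map}; naturality of the six-term exact sequence with respect to the $*$-homomorphism of pairs constructed above then identifies this with $\ind_G^{\loc}(\varphi([D]))$, using that the inclusion $C^*(Y)^G\hookrightarrow C^*(\underline{E}G)^G_{\loc}$ is the identity on $K$-theory (both being isomorphic to $K_*(C^*_{\red}(G))$ via \eqref{eq red Roe cocpt} and \eqref{eq loc Roe group Cstar}). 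Thus the triangle commutes, and surjectivity of $\mu_G$ implies surjectivity of $\ind_G^{\loc}$ on $K^G_*(\underline{E}G)_{\loc}$.

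The main technical obstacle is the equivariant Paschke duality statement in Step 1 and the precise verification that the inclusion $C^*(Y)^G\hookrightarrow C^*(\underline{E}G)^G_{\loc}$ induces the \emph{identity} on $K$-theory (not merely some isomorphism) under the canonical identifications with $K_*(C^*_{\red}(G))$; this is a bookkeeping issue about the standard isomorphism from Theorem \ref{thm Roe group Cstar}, but it is what makes Corollary \ref{cor loc index ass map} transport cleanly across the diagram. Once that is checked, the rest of the argument is formal.
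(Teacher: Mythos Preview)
Your proposal is correct and follows essentially the same approach as the paper: construct the map $\varphi$ by extending operators by zero from cocompact pieces, invoke equivariant Paschke duality, pass to the direct limit, and then use naturality of boundary maps together with Corollary \ref{cor loc index ass map} to obtain the factorisation $\mu_G = \ind_G^{\loc}\circ\varphi$. The paper's proof is terser and does not explicitly flag the bookkeeping point you raise about the inclusion $C^*(Y)^G\hookrightarrow C^*(\underline{E}G)^G_{\loc}$ inducing the canonical identification on $K$-theory, but this is implicit in the direct-limit description \eqref{eq loc Roe inj lim} and the identifications of Subsection \ref{sec ind maps noncocpt}; your caution there is well placed but not a gap.
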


The converse of Proposition \ref{prop LBCC} is directly related to the question of whether the equivariant localised index of Definitions \ref{def loc index} and \ref{def loc index ell} lands in the image of the Baum--Connes assembly map. This question was posed for the equivariant Callias index in \cite{Guo18}, and is open in general. See also Remark \ref{rem BC inj surj} below.

\section{Proofs of properties of equivariant Roe algebras} \label{sec proofs Roe}

In Subsections \ref{sec exist adm}--\ref{sec pf thm exist}, we prove Theorem \ref{thm exist adm}, which guarantees the existence of geometric admissible modules. We then use this in Subsection \ref{sec kernels} to prove Theorem \ref{thm Roe group Cstar}.

\subsection{An isomorphism of $G$-representations} \label{sec exist adm}

Since $G$ is a non-compact group, we decompose the space $X$ using Palais' local slice theorem \cite{Palais61}. Let $K_x$ denote the stabiliser of a point $x\in X$. This result states that for every point $x \in X$,   there is a $K_x$-invariant subset $Y_x \subset X$ such that the action map $G \times Y_x \to X$ descends to a $G$-equivariant homeomorphism from $G \times_{K_x} Y_x$ onto a $G$-invariant  open neighbourhood $W_x$ of $x$. Note that the stabiliser $K_x$ is compact, since the action is proper. 

Consider a covering of $X$ by sets of the form $G \times_{K_j} Y_j$, for compact subgroups $K_j < G$ and compact, $K_j$-invariant slices $Y_j \subset X$. Suppose that the intersections between these sets have measure zero. For each $j$, fix a $K$-invariant measure $dy_j$ on $Y_j$. Together with the Haar measure $dg$, they induce a $G$-invariant measure $dx$ on $X$. We will call such a measure \emph{induced from slices}. Such measures are natural choices; see for example Lemma 4.1 in \cite{HSIII}.

We will use the following fact, whose proof is straightforward.
\begin{lemma}[Fell absorption]\label{lem Fell}
If $\pi\colon G \to \U(\HH)$ is a unitary representation, and $\lambda\colon G \to \U(L^2(G))$ is the left-regular representation, then the map
\[
\Phi\colon
L^2(G)\otimes \HH \to L^2(G)\otimes \HH,
\] 
defined by
\[
\Phi(f)(g) = \pi(g) f(g),
\]
for $f \in L^2(G, \HH)$ and $g \in G$, is a unitary isomorphism intertwining the representations $\lambda \otimes \pi$ and $\lambda \otimes 1$.
\end{lemma}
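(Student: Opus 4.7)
The plan is to verify three things in sequence: first that $\Phi$ is a well-defined bounded operator, then that it is unitary, and finally that it implements the claimed intertwining. I will identify $L^2(G) \otimes \HH$ with $L^2(G,\HH)$ throughout, so that $\Phi(f)(g) = \pi(g)f(g)$ makes literal sense for $f \in L^2(G,\HH)$.

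For well-definedness and unitarity, the key point is that $\pi(g)$ is pointwise unitary on $\HH$, so
\[
\|\Phi(f)\|^2 = \int_G \|\pi(g) f(g)\|_{\HH}^2 \, dg = \int_G \|f(g)\|_{\HH}^2 \, dg = \|f\|^2.
\]
This shows $\Phi$ is an isometry. An explicit two-sided inverse is given by $\Psi(f)(g) := \pi(g)^{-1} f(g)$, which is similarly isometric, so $\Phi$ is unitary. Measurability of $g \mapsto \pi(g)f(g)$ follows from strong continuity of $\pi$.

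For the intertwining, I will compute both sides on an arbitrary $f \in L^2(G,\HH)$ at an arbitrary $g \in G$, for fixed $h \in G$. On one hand,
\[
\bigl(\Phi \circ (\lambda \otimes 1)(h)\bigr)(f)(g) = \pi(g) \bigl((\lambda \otimes 1)(h) f\bigr)(g) = \pi(g) f(h^{-1} g).
\]
On the other hand,
\[
\bigl((\lambda \otimes \pi)(h) \circ \Phi\bigr)(f)(g) = \pi(h) \Phi(f)(h^{-1} g) = \pi(h) \pi(h^{-1} g) f(h^{-1} g) = \pi(g) f(h^{-1} g),
\]
using the multiplicativity $\pi(h)\pi(h^{-1}g) = \pi(g)$. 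The two expressions agree, so $\Phi \circ (\lambda \otimes 1)(h) = (\lambda \otimes \pi)(h) \circ \Phi$ for all $h \in G$, which is the desired intertwining of $\lambda \otimes 1$ with $\lambda \otimes \pi$.

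Since the statement is essentially the classical Fell absorption computation, there is no real obstacle: the only care needed is tracking the $h$ versus $h^{-1}$ convention in the left-regular representation and confirming the direction in which $\Phi$ intertwines the two representations. In particular, no delicate estimates or measure-theoretic subtleties enter, and the formulas above suffice.
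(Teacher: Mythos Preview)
Your proof is correct. The paper does not actually give a proof of this lemma; it simply introduces it as a fact ``whose proof is straightforward'' and moves on, so your direct verification of unitarity and the intertwining identity is precisely the routine computation the paper is alluding to.
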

Suppose first that $X = G\times_K Y$, for a $K$-space $Y$. Let $dx$ be a measure
on $X$ be induced from the measure $dg$ on $G$ and a $K$-invariant measure $dy$ on $Y$. Consider the measure $d(Kg)$ on $K \backslash G$.
Choose a measurable section $\phi\colon K \backslash G\rightarrow G$. 
\begin{lemma}\label{lem measurable iso}
	The map
	\[ 
	\psi\colon X\times G=(G\times_K Y)\times G\cong G\times K\backslash G\times Y,
	\]
	given by
	\[
	\psi
	([g,y],h)=(h(\phi(Kg^{-1}h)^{-1}),Kg^{-1}h,(\phi(Kg^{-1}h)h^{-1}g)y)
	\]
	is a 
	$G$-equivariant, measurable bijection. It relates the measures $dx \times dg$ and $dg \times d(Kg) \times dy$ to each other.
\end{lemma}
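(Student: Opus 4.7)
The plan is to verify four things in turn: well-definedness on $G\times_K Y$, existence of an inverse, $G$-equivariance and measurability, and the measure identity.

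First, I would check that replacing $(g,y)$ by $(gk^{-1},ky)$ for $k \in K$ leaves $\psi([g,y], h)$ unchanged. This holds because $K(gk^{-1})^{-1}h = Kkg^{-1}h = Kg^{-1}h$, so the first two components do not change, and in the third component the extra $k^{-1}$ from $g \mapsto gk^{-1}$ cancels against the $k$ acting on $y$. Thus $\psi$ descends to $X \times G$. A natural candidate inverse is
\[
\psi^{-1}(a, Kb, y) := \bigl([a,y],\, a\phi(Kb)\bigr).
\]
Setting $g = a$ and $h = a\phi(Kb)$ yields $Kg^{-1}h = K\phi(Kb) = Kb$, and the two remaining components collapse to $a$ and $y$, giving $\psi \circ \psi^{-1} = \id$. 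For the reverse composition, setting $k := g^{-1}h\phi(Kg^{-1}h)^{-1}$ (which lies in $K$ by definition of the section $\phi$) shows that $\bigl(h\phi(Kg^{-1}h)^{-1}, \phi(Kg^{-1}h)h^{-1}g\,y\bigr) = (gk, k^{-1}y)$, which is $K$-equivalent to $(g,y)$.

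For $G$-equivariance I would let $G$ act diagonally on $X \times G$ by $g' \cdot ([g,y], h) = ([g'g, y], g'h)$, and by left translation on the first factor of $G \times K\backslash G \times Y$. Since $K(g'g)^{-1}(g'h) = Kg^{-1}h$, the second and third components of $\psi$ are manifestly $G$-invariant while the first transforms by left translation by $g'$; this is precisely the Fell absorption picture of Lemma \ref{lem Fell} underlying the admissible module structure. Measurability of $\psi$ follows from measurability of $\phi$ together with continuity of multiplication and inversion in the Lie group $G$.

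The main obstacle is the measure identity, which I would establish by lifting to $G \times Y \times G$, on which $dg\,dy\,dh$ descends (up to a $\vol(K)$ factor) to $dx\,dh$ under the $K$-action $(g,y,h) \mapsto (gk^{-1}, ky, h)$. I then apply three successive measure-preserving substitutions to decouple $\psi$: first $h \mapsto gh$, preserving $dg\,dh$ by left-invariance of $dh$, turns $\psi$ into $(g,y,h) \mapsto (gh\phi(Kh)^{-1}, Kh, \phi(Kh)h^{-1}y)$; next $g \mapsto g\phi(Kh)h^{-1}$, preserving $dg$ for each fixed $h$, normalizes the first component to $g$; finally $y \mapsto \phi(Kh)^{-1}h\,y$ preserves $dy$ because $\phi(Kh)^{-1}h$ lies in $K$ and $dy$ is $K$-invariant, normalizing the third component to $y$. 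The resulting integrand depends on $h$ only through $Kh$, so the standard disintegration $\int_G f(h)\,dh = \int_{K\backslash G}\int_K f(k\phi(Kh))\,dk\,d(Kh)$ converts $dh$ into $d(Kh)$, and the $K$-integration absorbs the $\vol(K)$ factor from the initial lift, producing exactly $dg \times d(Kg) \times dy$ on the target.
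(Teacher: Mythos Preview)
Your argument is correct, but it proceeds quite differently from the paper. The paper does not verify anything directly; instead it factors $\psi$ as a composition of four elementary isomorphisms of $G$- or $K$-measure spaces: the swap $(G\times_K Y)\times G\cong G\times_K(G\times Y)$ given by $([g,y],h)\mapsto[g,(g^{-1}h,y)]$, the measurable splitting $G\cong K\times(K\backslash G)$ via $g\mapsto(g\phi(Kg)^{-1},Kg)$, the twist $K\times Y\to K\times Y$, $(k,y)\mapsto(k,k^{-1}y)$, and finally $G\times_K K\cong G$. Each piece is self-evidently a bijective, equivariant, measure-preserving map, so well-definedness, invertibility, equivariance, and the measure identity all follow at once from the composition. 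Your direct approach has the virtue of making every step explicit---in particular your change-of-variables argument for the measure identity spells out what the paper leaves to the reader---and it also makes transparent exactly where unimodularity of $G$ and $K$-invariance of $dy$ enter (your substitutions 2 and 3). The paper's decomposition, on the other hand, explains where the somewhat opaque formula for $\psi$ comes from. One small slip: you invoke continuity of the Lie group operations, but in this section $G$ is only assumed locally compact; the argument is unaffected since the group operations are continuous regardless.
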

\begin{proof}
	There is a $K$-equivariant isomorphism of measure spaces (by which we mean a measurable bijection relating the given measures on the two spaces)
	$$G\rightarrow K\times(K\backslash G),\qquad g\mapsto(g(\phi(Kg)^{-1}),Kg).$$
	Thus we have a $G$-equivariant isomorphism of measure spaces
	$$G\times_K(G\times Y)\cong G\times_K(K\times K\backslash G\times Y),$$
	$$[(g,(h,y))]\mapsto[(g,h(\phi(Kh)^{-1}),Kh,y)].$$
	Combining this with the $K$-equivariant isomorphism
	$$K\times Y\rightarrow K\times Y,\qquad (k,y)\mapsto(k,k^{-1}y)$$
	(where $K$ acts diagonally on the left and only on the first factor on the right), this gives a $G$-equivariant isomorphism of measure spaces
	$$G\times_K(G\times Y)\cong G\times_K(K\times K\backslash G\times Y),$$
	$$[(g,(h,y))]\mapsto[(g,h(\phi(Kh)^{-1}),Kh,(\phi(Kh)h^{-1})y)],$$
	where $K$ now acts trivially on $Y$ on the right. Using the identification $G\times_K K\cong G$, $[(g,k)]\mapsto gk$, we get the $G$-equivariant isomorphism
	$$G\times_K(G\times Y)\cong G\times K\backslash G\times Y,$$
	$$[(g,(h,y))]\mapsto (gh(\phi(Kh)^{-1}),Kh,(\phi(Kh)h^{-1})y).$$
	Here $G$ acts only on the first factor of both sides. Note that
	$$(G\times_K Y)\times G\cong G\times_K(G\times Y),$$
	$$\qquad([(g,y)],h)\mapsto[(g,(g^{-1}h,y))]$$
	(where $G$ acts diagonally on the left and only on the first factor on the right). The first claim then follows.
	% 
	% The claim about the measures follows from the form of the measure $dx$, left and right invariance  of $dg$, and $K$-invariance of $dy$. Also, note that for all $g,h \in G$ and $y \in Y$, there is a $k \in K$ such that
	% \[
	% \psi([g,y],h) = (gk^{-1}, Kg^{-1}h, ky).
	% \]
\end{proof}

%\begin{proof}[Proof of Lemma \ref{lem one slice}]
%Lemma \ref{lem measurable iso} gives a $G$-equivariant isomorphism of measure spaces
%	$$G\times_K Y\times G\cong G\times K\backslash G\times Y,$$
%	where $G$ acts diagonally on the left and on the first factor on the right. The result now follows upon taking square-integrable functions.
%\end{proof}

%\begin{remark}
%When $X/G$ is compact, the space $H$ can be taken to be an ample module over a \textit{compact} subspace $Y$. 
%\end{remark}

%\subsection{Finite propagation on $X$ and $G$}

Set $H := L^2(K \backslash G) \otimes L^2(Y)$.
Pulling back functions along the map $\psi$ in Lemma \ref{lem measurable iso} induces a unitary, $G$-equivariant isomorphism
\[
\psi^*\colon L^2(G) \otimes H \to L^2(X) \otimes L^2(G).
\]

Suppose that $X/G$ is compact. Then, in general, $X$ is a finite union of sets of the form $G \times_{K_j} Y_j$ for compact subgroups $K_j < G$ and compact subsets $Y_j$. These can be chosen so that the overlaps between these sets have measure zero. Lemma \ref{lem measurable iso} yields isomorphisms
\[
\psi_j^*\colon L^2(G) \otimes H_j \to L^2(G \times_{K_j} Y_j) \otimes L^2(G),
\]
where $H_j = L^2(K_j \backslash G) \otimes L^2(Y_j)$. They  combine into a global isomorphism
\beq{eq def Psi}
\Psi\colon L^2(G) \otimes H \xrightarrow{\cong} \bigoplus_j  L^2(G \times_{K_j} Y_j)  \otimes L^2(G) \cong L^2(X) \otimes L^2(G),
\eeq
where $H = \bigoplus_j H_j$. We have proved the following:\footnote{
Here we have taken the bundle $E\rightarrow X$ to be the trivial line bundle. The general case can be proved completely analogously.}

\begin{proposition}\label{prop G rep}
There exists a $G$-equivariant unitary isomorphism $\Psi:L^2(G)\otimes H\rightarrow L^2(X)\otimes L^2(G)$ for a separable Hilbert space $H$. The space $H$ is infinite-dimensional if $G/K$ or $X/G$ is infinite.
\end{proposition}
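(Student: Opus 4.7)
The plan is to realise $\Psi$ as a direct sum of local isomorphisms built from Palais' slice theorem and Lemma \ref{lem measurable iso}. First I would invoke Palais' local slice theorem to cover each $G$-orbit in $X$ by a $G$-invariant open neighbourhood of the form $G \times_{K_x} Y_x$, with $K_x$ the (compact) stabiliser and $Y_x$ a $K_x$-invariant slice. Compactness of $X/G$ yields a finite subcover $\{G \times_{K_j} Y_j\}_j$, and a standard measure-theoretic refinement lets me arrange that pairwise intersections have measure zero with respect to the measure $dx$ induced from slices. On each piece, Lemma \ref{lem measurable iso} supplies a $G$-equivariant measurable bijection
\[
\psi_j\colon (G \times_{K_j} Y_j) \times G \to G \times (K_j \backslash G) \times Y_j,
\]
relating $dx \times dg$ to $dg \times d(K_j g) \times dy_j$, where $G$ acts diagonally on the left and only on the first factor on the right. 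Pulling back functions along $\psi_j$ then yields a unitary, $G$-equivariant isomorphism
\[
\psi_j^*\colon L^2(G) \otimes H_j \xrightarrow{\;\cong\;} L^2(G \times_{K_j} Y_j) \otimes L^2(G),
\]
with $H_j := L^2(K_j \backslash G) \otimes L^2(Y_j)$ carrying the trivial $G$-representation.

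Next I would glue. Because the overlaps have measure zero, the $L^2$ space of $X$ decomposes as an orthogonal direct sum indexed by the pieces of the cover, so setting $H := \bigoplus_j H_j$ and $\Psi := \bigoplus_j \psi_j^*$ defines a unitary, $G$-equivariant isomorphism $L^2(G) \otimes H \to L^2(X) \otimes L^2(G)$ as required. Separability of $H$ is immediate from separability of each $L^2(K_j \backslash G)$ and $L^2(Y_j)$ together with the finiteness of the cover. For the dimension claim, if some $G/K_j$ is infinite then $K_j \backslash G$ carries infinite measure and $L^2(K_j \backslash G)$ is infinite-dimensional; if instead $X/G$ is infinite as a set then at least one slice $Y_j$ must consist of infinitely many points, making $L^2(Y_j)$ infinite-dimensional. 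Either way, $H$ inherits infinite dimension.

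I expect the main technical obstacle to be the measure-theoretic setup rather than the algebra: Palais' theorem produces overlapping open neighbourhoods, so one has to refine the cover — for instance by passing to smaller slices or to a Borel partition — until the pieces intersect only in measure-zero sets while still being of the form $G \times_{K_j} Y_j$. The essential input from Lemma \ref{lem measurable iso} is the existence of a measurable (not necessarily continuous) section $\phi\colon K_j \backslash G \to G$; once this is fixed the explicit formula reconciling the two product structures, and in particular the intertwining of the diagonal $G$-action with the left-regular one, is essentially forced by Fell absorption. The extension from the trivial line bundle to a general Hermitian $G$-vector bundle $E \to X$ needed elsewhere in the paper proceeds identically after trivialising $E$ over each slice and absorbing the fibre into the factor $L^2(Y_j)$.
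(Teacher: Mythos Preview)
Your proposal is correct and matches the paper's proof essentially line for line: the paper likewise covers $X$ by finitely many slices $G\times_{K_j}Y_j$ with null overlaps, applies Lemma~\ref{lem measurable iso} on each piece to obtain $\psi_j^*\colon L^2(G)\otimes H_j\to L^2(G\times_{K_j}Y_j)\otimes L^2(G)$ with $H_j=L^2(K_j\backslash G)\otimes L^2(Y_j)$, and then takes the direct sum $\Psi=\bigoplus_j\psi_j^*$ with $H=\bigoplus_j H_j$. Your remarks on the measure-theoretic refinement and on the vector-bundle generalisation also mirror the paper's own footnote.
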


\subsection{Propagation in $X$ and in $G$}

Next, we show that $\Psi$ coarsely relates propagation on $L^2(G)\otimes H$ with respect to $C_0(G)$ to  propagation on $L^2(X)\otimes L^2(G)$ with respect to $C_0(X)$.
\begin{proposition}\label{prop prop G X}
	Let $\Psi$ be the isomorphism in Proposition \ref{prop G rep}. An operator $T$ on $L^2(X) \otimes L^2(G)$ has finite propagation in $X$ if and only if
	$\Psi^{-1} \circ T \circ \Psi$ has finite propagation in $G$.
\end{proposition}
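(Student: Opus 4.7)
The plan is to prove the equivalence by analyzing the map $\psi$ from Lemma \ref{lem measurable iso}, showing that the projection $\pi_X\colon X \times G \to X$ and the composite $\pi_1 \circ \psi\colon X \times G \to G$ (where $\pi_1$ denotes the projection of $G \times K\backslash G \times Y$ onto the first factor) are compatible up to bounded error, via the coarse equivalence between $G$ and $X = G \times_K Y$ that holds because $K$ and $Y$ are compact. I would first treat the single-slice case $X=G\times_K Y$, and then extend to the general cocompact case by taking direct sums over the slice decomposition preceding Proposition \ref{prop G rep}.

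The key pointwise identity is that for $x = [g,y]$ and any $h \in G$, one has $(\pi_1 \circ \psi)(x,h) \in gK$. Indeed, writing $\phi(Kg^{-1}h) = k_0 g^{-1}h$ for some $k_0 \in K$ (possible since $\phi(Kg^{-1}h) \in Kg^{-1}h$), the first coordinate $h\phi(Kg^{-1}h)^{-1}$ of $\psi([g,y],h)$ equals $gk_0^{-1}$. Fixing $y_0 \in Y$ and setting $\iota\colon G \to X$, $\iota(g) := [g,y_0]$, this identity together with $G$-invariance of the metric on $X$ yields the uniform estimate
\[
d_X\bigl(\iota((\pi_1 \circ \psi)(x,h)),\ \pi_X(x,h)\bigr) \leq D_Y,
\]
where $D_Y$ is the diameter of $\{[e,y]:y \in Y\} \subset X$. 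Moreover, equipping $G$ with a proper left-invariant metric, $\iota$ is a coarse equivalence, so the two metrics $d_G$ and $d_X\circ(\iota\times\iota)$ are comparable up to bounded error in both directions.

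For the forward direction, suppose $T$ has $C_0(X)$-propagation at most $r$. Given Borel subsets $B_1, B_2 \subset G$ with $d_G(B_1, B_2) > r'$ for $r'$ sufficiently large, the pointwise estimate shows that $\supp(\one_{B_i} \circ \pi_1 \circ \psi) \subset \pi_X^{-1}(A_i)$, where $A_i := \Pen(\iota(B_i), D_Y)$, and the coarse equivalence can be arranged so that $d_X(A_1, A_2) > r$. Using the extension of the $C_0(X)$-action to bounded Borel functions, the multiplication operators satisfy $M_{\one_{A_1}\circ \pi_X}\, T\, M_{\one_{A_2}\circ \pi_X} = 0$, and multiplying on both sides by $M_{\one_{B_i}\circ\pi_1\circ\psi}$ yields the $C_0(G)$-propagation vanishing for $\tilde T := \Psi^{-1}T\Psi$. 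For the converse, given $A_1, A_2 \subset X$ with sufficiently large $d_X$-separation, I would set $\bar B_i := \bigcup_{[g,y] \in A_i} gK \subset G$ and observe that $\one_{A_i}\circ\pi_X \leq \one_{\bar B_i}\circ\pi_1\circ\psi$ pointwise on $X \times G$, since $(\pi_1\circ\psi)(x,h) \in gK \subseteq \bar B_i$ whenever $x=[g,y] \in A_i$. The coarse equivalence then controls $d_G(\bar B_1, \bar B_2)$ from below, and a symmetric sandwich argument delivers the required vanishing.

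The main technical obstacle I anticipate is the careful bookkeeping of coarse-equivalence constants and the Pen-neighborhood inflations needed to translate propagation in $X$ into propagation in $G$ (and vice versa), particularly when gluing across multiple slices. The core geometric content --- that $\pi_1 \circ \psi$ agrees with $\pi_X$ to bounded error modulo the coarse identification $G \leftrightarrow X$ --- is immediate from the explicit formula for $\psi$ in Lemma \ref{lem measurable iso}.
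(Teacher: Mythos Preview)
Your proposal is correct and follows essentially the same approach as the paper: the pointwise identity $(\pi_1\circ\psi)([g,y],h)\in gK$ is exactly the content of Lemma~\ref{lem psi1}, the coarse equivalence $G\leftrightarrow X$ via $\iota$ is Lemma~\ref{lem dG dX}, and the reduction of the multi-slice case to the single-slice case by a matrix/direct-sum decomposition is how the paper finishes as well. The only cosmetic difference is that you work with Borel indicator functions (via the $L^\infty$-extension of the module action) where the paper uses $C_c$ functions and explicit cut-offs, but this does not change the substance of the argument.
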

To show this, we suppose first that $X$ consists of just one slice. That is, $X = G \times_K Y$, for a compact $K$-space $Y$. We will reduce the general case to this case.
Let $\diam(K)$ be the diameter of $K$.
%\begin{lemma} \label{lem dgk}
%For all $g,g' \in G$ and $k,k' \in K$, 
%\[
%d_G(gk, g'k') \leq d_G(g,g')+2\diam(K).
%\]
%\end{lemma}
%\begin{proof}
%By the triangle inequality and left invariance of $d$,
%\begin{multline*}
%d_G(gk, g'k') \leq d_G(gk, g) + d(g,g') + d(g', g'k') \leq  d_G(g,g')+2\diam(K).
%\end{multline*}
%\end{proof}
%
\begin{lemma} \label{lem psi1}
	Let 
	\[
	\psi\colon G \times_K Y \times G \to G \times K \backslash G \times Y
	\]
	be the bijective map from Lemma \ref{lem measurable iso}. Let $\psi_1$ be its first component, mapping into $G$. Then for all $g,g', h,h' \in G$ and $y,y' \in Y$,
	\[
	d_G(g,g') - 2\diam(K) \leq d_G(\psi_1([g,y], h), \psi_1([g',y'], h')) \leq d_G(g,g') + 2\diam(K).
	\]
\end{lemma}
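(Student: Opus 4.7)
The plan is to observe first that the component $\psi_1$ depends on $h$ and $y$ only up to a bounded $K$-ambiguity; in fact $\psi_1([g,y], h)$ differs from $g$ only by right multiplication by an element of $K$. Since $\phi$ is a section of the quotient map $G \to K\backslash G$, I have $\phi(Kg^{-1}h) \in Kg^{-1}h$, so there is some $k = k(g,h) \in K$ with $\phi(Kg^{-1}h) = k g^{-1} h$. Substituting this into the defining formula for $\psi_1$ gives
\[
\psi_1([g,y], h) = h \cdot \phi(Kg^{-1}h)^{-1} = h \cdot h^{-1} g k^{-1} = g k^{-1},
\]
and likewise $\psi_1([g', y'], h') = g' (k')^{-1}$ for some $k' \in K$. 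So $\psi_1$ is, up to a right $K$-factor, just the projection to the first factor.

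Granted this simplification, both inequalities reduce to the triangle inequality combined with left-invariance of $d_G$ (the natural choice of metric in this context, since the paper works throughout with $L^2(G)$ and the left-regular representation; a proper left-invariant metric exists on any second-countable locally compact group by Struble's theorem). Explicitly, I would compute
\[
d_G(g k^{-1}, g' (k')^{-1}) \leq d_G(g k^{-1}, g) + d_G(g, g') + d_G(g', g' (k')^{-1}) = d_G(k^{-1}, e) + d_G(g, g') + d_G((k')^{-1}, e),
\]
and since $k, k' \in K$ and $e \in K$, each outer term is bounded by $\diam(K)$, giving the upper bound. The lower bound follows symmetrically from the chain $d_G(g, g') \leq d_G(g, gk^{-1}) + d_G(gk^{-1}, g'(k')^{-1}) + d_G(g'(k')^{-1}, g')$ after rearranging.

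There is no substantial obstacle: the entire content of the lemma is the algebraic collapse $\psi_1([g,y], h) = g k^{-1}$, after which both bounds are immediate. The only minor point worth flagging is that the argument uses left-invariance of $d_G$ in an essential way, via the identity $d_G(gk^{-1}, g) = d_G(k^{-1}, e)$, so this convention on $d_G$ should be fixed before the proof begins.
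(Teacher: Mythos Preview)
Your proof is correct and follows exactly the same approach as the paper: first reduce $\psi_1([g,y],h)$ to $gk^{-1}$ for some $k \in K$ via the section property of $\phi$, then apply the triangle inequality together with left-invariance of $d_G$. The only difference is that you spell out the two triangle-inequality chains explicitly, whereas the paper simply states that the result lies in the required range ``by the triangle inequality and left invariance of $d$.''
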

\begin{proof}
	If $g, h \in G$ and $y \in Y$, then
	\[
	\psi_1([g,y], h) = h \phi(Kg^{-1}h)^{-1},
	\]
	where $\phi\colon K\backslash G \to G$ is a section. This means that there is a $k \in K$ such that $\phi(Kg^{-1}h) = kg^{-1}h$. Hence
	\[
	\psi_1([g,y], h) = gk^{-1}.
	\]
	Let $g,g', h,h' \in G$ and $y,y' \in Y$. Then we have just seen that there are $k,k' \in K$ such that
	\[
	d_G(\psi_1([g,y], h), \psi_1([g',y'], h')) = d_G(gk^{-1}, g'k'^{-1}).
	\]
	This lies in the range specified by the triangle inequality and  left invariance of $d$.
	%hence the claim follows by Lemma \ref{lem dgk}.
\end{proof}
\begin{lemma} \label{lem dG dX}
	For all $s>0$ there are $r, r'>0$ such that for all $g,g' \in G$ and $y,y' \in Y$, 
	\[
	\begin{split}
	d_G(g,g') \leq s \quad &\Rightarrow \quad d(gy, g'y') \leq r. \\
	d(gy, g'y')  \leq s  \quad &\Rightarrow \quad d_G(g,g')  \leq r'.
	\end{split}
	\]
\end{lemma}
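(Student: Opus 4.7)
The plan is to prove each implication separately, using compactness of $Y$ and isometricity of the $G$-action for the first, and properness of the action for the second. Throughout, I will use that the metric $d_G$ on $G$ is left-invariant and proper (so that closed balls are compact), which is a standard hypothesis in this coarse-geometric setting and which one may invoke for any locally compact $\sigma$-compact group by Struble's theorem.

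For the first implication, fix $g,g' \in G$ with $d_G(g,g') \leq s$ and $y,y' \in Y$. By the triangle inequality and the fact that $G$ acts isometrically on $X$,
\[
d(gy, g'y') \leq d(gy, gy') + d(gy', g'y') = d(y,y') + d(y', g^{-1}g' y').
\]
The first term is bounded by $\diam(Y)$. For the second, left-invariance of $d_G$ gives $g^{-1}g' \in B_s := \{h \in G : d_G(e,h) \leq s\}$, which is compact. The continuous function $B_s \times Y \to \R$, $(h,y') \mapsto d(y', hy')$, therefore attains a maximum $M_s$, and one sets $r := \diam(Y) + M_s$.

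For the second implication, suppose $d(gy, g'y') \leq s$ and set $h := g^{-1}g'$. Isometricity gives $d(y, hy') \leq s$. Consider the closed $s$-neighbourhood $C := \{x \in X : d(x,Y) \leq s\}$, which is compact because $Y$ is compact and $X$ is a proper metric space. Then $y' \in Y \subseteq C$ and $hy' \in C$, so $hC \cap C$ contains $hy'$ and is nonempty. By properness of the $G$-action on $X$, the set $\{h \in G : hC \cap C \neq \emptyset\}$ is relatively compact in $G$, hence bounded in $d_G$; let $r'$ bound its $d_G$-diameter from $e$. Then $d_G(g,g') = d_G(e,h) \leq r'$ by left-invariance.

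The main obstacle is really just the second direction: one must recognise that compactness of $Y$ together with properness of the metric on $X$ upgrades the $s$-closeness condition to membership in a single compact subset of $X$, so that the properness of the $G$-action (rather than any finer information) already forces $h$ into a compact subset of $G$. The first direction is essentially a uniform continuity argument over a compact parameter space, which is straightforward once one reduces to the ball $B_s$ via left-invariance.
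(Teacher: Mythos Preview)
Your proof is correct and follows essentially the same route as the paper's. For the first implication the paper defines $r$ directly as $\max\{d(gy,y') : y,y' \in Y,\ d_G(g,e)\leq s\}$ rather than splitting via the triangle inequality, and for the second it uses the set $\{g : gY \cap \Pen(Y,s) \neq \emptyset\}$ in place of your $\{h : hC \cap C \neq \emptyset\}$; these are cosmetic variations of the same compactness-plus-properness argument.
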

\begin{proof}
	Let $s>0$ be given. Define
	\[
	r:= \max \{d(gy, y'); y,y' \in Y, g \in G, d_G(g,e) \leq s\}.
	\]
	Here we use compactness of $Y$.
	Then for all $g,g' \in G$ with $d_G(g,g') \leq s$, we have $d_G(g'^{-1}g, e) \leq s$, so for all $y,y' \in Y$,
	\[
	d(gy, g'y') = d(g'^{-1}gy, y') \leq r.
	\]
	
	To prove the second claim, note that properness of the action by $G$ on $X$ and compactness of $Y$ imply that the set
	\[
	A_s := \{g \in G; gY \cap \Pen(Y, s) \not= \emptyset \}
	\]
	is compact (with notation as in \eqref{eq def Pen}). Set
	\[
	r' := \max\{d_G(g,e); g \in A_s\}.
	\]
	Then for all $g,g' \in G$ and $y,y' \in Y$ with $d(gy, gy') \leq s$, we have $g'^{-1}g \in A_s$, so
	$
	d_G(g, g') \leq r'
	$.
\end{proof}
\begin{lemma} \label{lem prop X prop G}
	If an operator $T$ on $L^2(X) \otimes L^2(G)$ has finite propagation in $X$, then
	$\Psi^{-1} \circ T \circ \Psi$ has finite propagation in $G$.
\end{lemma}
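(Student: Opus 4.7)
The plan is to transfer the finite-propagation property through the pullback $\Psi = \psi^*$ by exploiting that functions on $G$ pulled back via $\psi_1$ have supports of a very restricted shape in $X \times G$. Under the identifications $L^2(G) \otimes H \cong L^2(G \times (K\backslash G) \times Y)$ and $L^2(X) \otimes L^2(G) \cong L^2(X \times G)$, a direct computation shows that for $h \in C_0(G)$, the multiplication operator on the first factor of $L^2(G) \otimes H$ is conjugated by $\Psi$ to multiplication by the pulled-back function $h \circ \psi_1$ on $L^2(X \times G)$. Hence it suffices to show: given $T$ of propagation $r$ in $X$, there is some $s > 0$ such that $h_1, h_2 \in C_0(G)$ with $d_G(\mathrm{supp}(h_1), \mathrm{supp}(h_2)) > s$ force $(h_1 \circ \psi_1)\, T\, (h_2 \circ \psi_1) = 0$.

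The key support analysis is the following. From the computation at the start of the proof of Lemma \ref{lem psi1}, for each fixed $[g, y] \in X$ the map $h' \mapsto \psi_1([g, y], h')$ takes values in the single $K$-coset $gK \subset G$, because the auxiliary element $k$ lies in $K$. Consequently, if $h$ is supported in $A \subset G$, then $h \circ \psi_1$ is supported inside $B \times G$, where $B := \pi^{-1}(AK/K) \subset X$ and $\pi\colon X \to G/K$ sends $[g, y]$ to $gK$. In operator terms, this means multiplication by $h \circ \psi_1$ commutes with, and is absorbed by, multiplication by $\chi_B$ acting on the $L^2(X)$ factor alone.

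The metric step then pairs this with Lemma \ref{lem dG dX}: any $b \in B$ admits a representative $[a, y']$ with $a \in A$ and $y' \in Y$, so $d_X(b_1, b_2) = d(a_1 y_1', a_2 y_2')$, and the contrapositive of part (ii) of Lemma \ref{lem dG dX} furnishes, for the given $r$, an $r'$ such that $d_G(a_1, a_2) > r'$ forces $d(a_1 y_1', a_2 y_2') > r$ uniformly in $y_1', y_2' \in Y$. Choosing $s$ slightly larger than $r'$ to obtain the strict inequality $d_X(B_1, B_2) > r$, I then factor
\[
(h_1 \circ \psi_1)\, T\, (h_2 \circ \psi_1) = (h_1 \circ \psi_1)(\chi_{B_1} \otimes 1)\, T\, (\chi_{B_2} \otimes 1)(h_2 \circ \psi_1),
\]
and the middle factor $(\chi_{B_1} \otimes 1)\, T\, (\chi_{B_2} \otimes 1)$ vanishes by finite $X$-propagation of $T$. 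The main technical obstacle is the support analysis in the second paragraph: even though $\psi$ genuinely mixes the $X$ and $G$ coordinates, the fact that $\psi_1(x, \cdot)$ is pinned to a single $K$-coset of $G$ is precisely what lets the pulled-back function be controlled by a genuine $X$-localization, making the $X$-propagation of $T$ applicable.
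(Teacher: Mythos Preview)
Your proof is correct and follows essentially the same route as the paper: both show that if $\chi_1,\chi_2$ on $G$ have $G$-distant supports then the $X$-projections of $\psi^{-1}(\supp\chi_j\times K\backslash G\times Y)$ are $X$-distant, and then sandwich $T$ between $X$-localisers to conclude. The one organizational difference is that you use the coset containment $\psi_1([g,y],h)\in gK$ (extracted from the \emph{proof} of Lemma~\ref{lem psi1}) to get the clean inclusion $\supp(h\circ\psi_1)\subset\pi^{-1}((\supp h)K/K)\times G$, whereas the paper invokes the two-sided distance estimate in the \emph{statement} of Lemma~\ref{lem psi1}, which introduces the explicit $2\diam(K)$ buffer; your packaging is slightly tidier and avoids that constant. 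One small point worth a sentence: you apply finite $X$-propagation directly to the indicators $\chi_{B_j}$ rather than to $C_0(X)$ functions; this is harmless once you restrict to $h_j\in C_c(G)$ (which suffices), so that each $B_j$ is compact and you can insert $\varphi_j\in C_c(X)$ with $\varphi_j\equiv 1$ on $B_j$ and $d(\supp\varphi_1,\supp\varphi_2)>r$, exactly as the paper does.
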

\begin{proof}
	Suppose that $T$ is an operator on $L^2(X) \otimes L^2(G)$ with finite propagation $s$ in $X$. By the second part of Lemma \ref{lem dG dX}, there is an $r>0$ such that for all $g,g' \in G$ and $y,y' \in Y$, 
	\[
	d_G(g,g')  \geq r \Rightarrow
	d(gy, g'y')  \geq s+1.
	\]
	Let $\chi_1, \chi_2 \in C_c(G)$ be given, with $d_G(\supp(\chi_1), \supp(\chi_2)) \geq r+2\diam(K)$.
	For $j = 1,2$, let $g_j \in \supp(\chi_j)$, $h_j \in G$ and $y_j \in Y$ be given. Write
	\[
	(\tilde g_j \tilde y_j, \tilde h_j) = \psi^{-1}(g_j, Kh_j, y_j),
	\]
	for $\tilde g_j, \tilde h_j \in G$ and $\tilde y_j \in Y$. Then by Lemma \ref{lem psi1},
	\[
	d_G(\tilde g_1, \tilde g_2) \geq d_G(g_1, g_2) - 2\diam(K) \geq r.
	\]
	So $d(\tilde g_1 \tilde y_1, \tilde g_2 \tilde y_2) \geq s+1$. Let $\pi_X\colon X\times G \to X$ be the projection onto the first factor. We have just seen that
	\[
	d\bigl(\pi_X(\psi^{-1}(\supp \chi_1 \times K \backslash G \times G)), \pi_X(\psi^{-1}(\supp \chi_2 \times K \backslash G \times G))\bigr) \geq s+1.
	\]
	Hence we can choose $\varphi_j \in C_c (X)$, for $j=1,2$, such that $\varphi_j\equiv 1$ on $\pi_X(\psi^{-1}(\supp \chi_j \times K \backslash G \times G))$, and
	\[
	d(\supp(\varphi_1), \supp(\varphi_2)) \geq s.
	\]
	We conclude that
	\[
	\chi_1(\Psi^{-1}\circ T \circ \Psi) \chi_2 = 
	\Psi^{-1}\circ (\psi^*(\chi_1 \otimes 1_{K \backslash G \times Y})\varphi_1 T \varphi_2  \psi^*(\chi_2 \otimes 1_{K \backslash G \times Y}))\circ \Psi = 0,
	\]
	since $\varphi_1 T \varphi_2 =0$.
\end{proof}
\begin{lemma} \label{lem prop G prop X}
	If an operator $\tilde T$ on $L^2(G) \otimes H$ has finite propagation in $G$, then 
	$\Psi \circ \tilde T \circ \Psi^{-1}$  has finite propagation in $X$.
\end{lemma}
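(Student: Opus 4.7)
The plan is to mirror the argument for Lemma \ref{lem prop X prop G}, but now with the directions of the two estimates reversed: I will use the first implication of Lemma \ref{lem dG dX} (which converts $d_G$-smallness into $d_X$-smallness, and hence $d_X$-largeness into $d_G$-largeness by contrapositive) together with the support analysis furnished by Lemma \ref{lem psi1}. I will first do the single-slice case $X = G \times_K Y$, and then treat the general case by handling each of the finitely many slices $G \times_{K_j} Y_j$ separately and taking the largest of the resulting propagation bounds.

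Let $\tilde T$ on $L^2(G) \otimes H$ have $G$-propagation $s$, and set $T := \Psi \circ \tilde T \circ \Psi^{-1}$. Since $\Psi$ is the unitary induced by pullback along $\psi$, conjugating by $\Psi$ identifies multiplication by $\varphi \in C_c(X)$ on $L^2(X) \otimes L^2(G)$ with multiplication by $\psi^*(\varphi \otimes 1_{K \backslash G \times Y})$ on $L^2(G) \otimes H \cong L^2(G \times K \backslash G \times Y)$. So it is enough to show that given $\varphi_1, \varphi_2 \in C_c(X)$ with $X$-supports sufficiently far apart, one has
\[
\psi^*(\varphi_1 \otimes 1) \, \tilde T \, \psi^*(\varphi_2 \otimes 1) = 0.
\]

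For this I track the first (i.e.\ $G$-) coordinate of the support. By Lemma \ref{lem psi1}, $\psi_1([g, y], h) = g k^{-1}$ for some $k \in K$ depending on $g, h$. Consequently the $G$-projection $A_j$ of $\supp \psi^*(\varphi_j \otimes 1)$ is contained in the compact set $\{ g k : [g, y] \in \supp \varphi_j,\ k \in K \}$. Now apply the first part of Lemma \ref{lem dG dX}, with the parameter there replaced by $s + 2\diam(K)$: there exists $r_0 > 0$ such that $d_G(g_1, g_2) \leq s + 2\diam(K)$ implies $d(g_1 y_1, g_2 y_2) \leq r_0$ for all $y_1, y_2 \in Y$. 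Set $r = r_0 + 1$. If $d(\supp \varphi_1, \supp \varphi_2) > r$, then for any representatives $[g_j, y_j] \in \supp \varphi_j$ we have $d(g_1 y_1, g_2 y_2) > r_0$, hence $d_G(g_1, g_2) > s + 2\diam(K)$, and the standard left-invariance estimate $d_G(g_1 k_1, g_2 k_2) \geq d_G(g_1, g_2) - 2\diam(K)$ gives $d_G(A_1, A_2) > s$.

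Pick $\chi_j \in C_c(G)$ with $\chi_j \equiv 1$ on $A_j$ and $d_G(\supp \chi_1, \supp \chi_2) > s$. Then $\chi_j \cdot \psi^*(\varphi_j \otimes 1) = \psi^*(\varphi_j \otimes 1)$, while $\chi_1 \tilde T \chi_2 = 0$ by the finite propagation of $\tilde T$, which immediately yields the desired vanishing. The main obstacle, as this sketch suggests, is not a deep computation but rather the careful bookkeeping of supports under $\psi$: verifying that the $G$-projection $A_j$ really is controlled by a compact, $K$-bounded thickening of $\supp \varphi_j$, which is exactly what Lemma \ref{lem psi1} provides.
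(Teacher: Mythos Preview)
Your argument is correct and follows essentially the same route as the paper's proof: use the first implication of Lemma \ref{lem dG dX} (contrapositively) to convert $X$-separation of $\supp\varphi_1,\supp\varphi_2$ into $G$-separation of their $\psi$-images, invoke Lemma \ref{lem psi1} to absorb the $2\diam(K)$ ambiguity, insert cutoff functions $\chi_j\in C_c(G)$, and conclude from $\chi_1\tilde T\chi_2=0$. The general multi-slice case is then handled by taking the maximum of the finitely many propagation bounds, exactly as in the paper.

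One notational slip: the multiplication operator on $L^2(G)\otimes H$ corresponding to multiplication by $\varphi\in C_c(X)$ on $L^2(X)\otimes L^2(G)$ is $(\psi^{-1})^*(\varphi\otimes 1_G)$, not $\psi^*(\varphi\otimes 1_{K\backslash G\times Y})$ (the latter does not type-check, since $\varphi$ lives on $X$ and $\psi$ maps $X\times G\to G\times K\backslash G\times Y$). With this correction the computation goes through verbatim, and your set $A_j$ is precisely the paper's $\pi_G(\psi(\supp(\varphi_j)\times G))$.
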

\begin{proof}
	Let $\tilde T$ be an operator on $L^2(G) \otimes H$ with finite propagation $s$ in $G$. The first part of Lemma \ref{lem dG dX} implies that there is an $r>0$ such that for all $g,g' \in G$ and $y,y' \in Y$,
	\beq{eq dist gy g}
	d(gy, g'y') \geq r \quad \Rightarrow \quad d_G(g,g') \geq s+1 +2\diam(K).
	\eeq
	For $j=1,2$, let $\varphi_j \in C_c(X)$ be such that $d(\supp(\varphi_1), \varphi_2) \geq r$. 
	Let $\pi_G\colon G \times K\backslash G \times Y \to G$ be the projection onto the first factor.
	By Lemma \ref{lem psi1}, we have, for all $g_j, h_j \in G$ and $y_j \in Y$ such that $g_j y_j \in \supp(\varphi_j)$,
	\[
	d_G\bigl(\pi_G(\psi(g_1y_1, h_1)), \pi_G(\psi(g_2y_2, h_2)) \bigr) \geq d_G(g_1, g_2)-2\diam(K) \geq s+1,
	\]
	where we have used \eqref{eq dist gy g}. So we can choose $\chi_j \in C_c(G)$ such that $\chi_j\equiv 1$ on $\pi_G(\psi(\supp(\varphi_j)\times G))$ and $d_G(\supp(\chi_1), \supp(\chi_2)) \geq s$. Then
	\[
	\varphi_1 (\Psi \circ \tilde T \circ \Psi^{-1}) \varphi_2 = 
	\Psi \circ ((\psi^{-1})^*(\varphi_1 \otimes 1_G)  \chi_1 \tilde T  \chi_2  (\psi^{-1})^*(\varphi_2 \otimes 1_G)   ) \circ \Psi^{-1} =0,
	\]
	since $\chi_1 \tilde T  \chi_2 =0$.
\end{proof}
%%%%%%%
\begin{proof}[Proof of Proposition \ref{prop prop G X}]
	If $X = G\times_K Y$ for a single, compact slice $Y \subset X$, then the claim is precisely Lemmas \ref{lem prop X prop G} and \ref{lem prop G prop X}.
	
	In the general case when the cocompact space $X$ consists of finitely many slices, $L^2(X)\otimes L^2(G)$ is a finite direct sum $\bigoplus_i L^2(G)\otimes H_i$ as in \eqref{eq def Psi}. 
	%Explicitly, if we write $X$ as a union of slice neighbourhoods $G\times_{K_i}Y_i$, then
	%$$H_i=L^2(Y_i)\otimes L^2(K_i\backslash G).$$
	An operator $T$ on this space can be written as a finite matrix $(T_{i,j})$, where each entry $T_{i,j}$ is an operator
	$$T_{i,j}:L^2(G)\otimes H_i\rightarrow L^2(G)\otimes H_j.$$
	The result then follows from the case of a single slice.
\end{proof}

\subsection{Proof of Theorem \ref{thm exist adm}} \label{sec pf thm exist}

The remaining step in the proof of Theorem \ref{thm exist adm} is to show that $\Psi$  relates local compactness of operators on $H_X$ with respect to $C_0(X)$ to local compactness of operators on $L^2(G)\otimes H$ with respect to $C_0(G)$.
\begin{proposition}
\label{prop loc cpt X G}
A bounded operator $T$ on $H_X$ is locally compact with respect to the action by $C_0(X)$ if and only if the bounded operator $\Psi^{-1}\circ T \circ \Psi$ on $L^2(G)\otimes H$ is locally compact with respect to the action by $C_0(G)$.
\end{proposition}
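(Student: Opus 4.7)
The plan is to show that the unitary $\Psi$ of Proposition \ref{prop G rep} transfers multiplication by $\chi\in C_c(G)$ to multiplication by a bounded function on $X\times G$ whose support projects to a compact subset of $X$, and conversely transfers multiplication by $f\in C_c(X)$ to multiplication by a bounded function on $G\times K\backslash G\times Y$ whose support projects to a compact subset of $G$. From this, the two notions of local compactness will exchange trivially.

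Following the structure of the proof of Proposition \ref{prop prop G X}, I would first handle the single-slice case $X = G\times_K Y$. The key input is the identity
$$\psi_1([g,y], h) \;=\; g\,k(g,h)^{-1} \quad\text{for some } k(g,h)\in K,$$
obtained in the course of the proof of Lemma \ref{lem psi1}. It gives two support-comparison facts. First, for $\chi\in C_c(G)$ the pullback $\tilde\chi := \chi\circ\psi_1$ on $X\times G$ is supported in $L_\chi\times G$ with $L_\chi := (\supp(\chi)\cdot K)\times_K Y$ compact in $X$, because $K$ and $Y$ are compact. Second, for $f\in C_c(X)$ the pullback $\tilde f(g, Kh, y) := f(\pi_X(\psi^{-1}(g, Kh, y)))$ on $G\times K\backslash G\times Y$ is supported where $g\in A_f\cdot K$, with $A_f\subset G$ the projection to $G$ of the preimage of $\supp(f)$ under the quotient map $G\times Y\to X$; since $K$ is compact this quotient is proper, so $A_f$, and then $A_f\cdot K$, are compact.

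With these facts both implications become formal. If $T$ is locally compact in $X$ and $\chi\in C_c(G)$, pick $f\in C_c(X)$ with $f\equiv 1$ on $L_\chi$, so that $\tilde\chi = \tilde\chi\cdot(f\otimes 1_G)$; then
$$\Psi\circ\bigl(m_\chi\circ\Psi^{-1}T\Psi\bigr)\circ\Psi^{-1} \;=\; \tilde\chi\cdot T \;=\; \tilde\chi\cdot\bigl((f\otimes 1_G)T\bigr),$$
which is compact since $(f\otimes 1_G)T$ is compact by local compactness of $T$ in $X$ and $\tilde\chi$ is bounded; the right-sided version and the converse implication are entirely symmetric, using $\tilde f = \tilde f\cdot(\chi\otimes 1_H)$ for a suitable $\chi\in C_c(G)$ with $\chi\equiv 1$ on $A_f\cdot K$. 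The multi-slice case then follows as at the end of the proof of Proposition \ref{prop prop G X}, by writing $T$ as a finite matrix of operators between the orthogonal summands of \eqref{eq def Psi} and applying the single-slice result entry by entry. I expect the only nontrivial point to be the compactness of $A_f$; this is where properness of the $G$-action on $X$ and compactness of $K$ genuinely enter, and it is the geometric reason why the statement would fail for ample but non-admissible modules.
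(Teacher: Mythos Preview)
Your proposal is correct and follows essentially the same approach as the paper: reduce to a single slice $X=G\times_K Y$, use the formula $\psi_1([g,y],h)=gk^{-1}$ from Lemma \ref{lem psi1} to show that pulling back $\chi\in C_c(G)$ (resp.\ $f\in C_c(X)$) along $\psi$ yields a bounded multiplication operator whose support projects to a compact set in $X$ (resp.\ $G$), then dominate by a compactly supported function on the other side and conclude; finally pass to finitely many slices via the matrix decomposition. The only cosmetic difference is that the paper phrases the boundedness of these projected supports by invoking Lemmas \ref{lem psi1} and \ref{lem dG dX} ``as in the proofs of Lemmas \ref{lem prop X prop G} and \ref{lem prop G prop X}'', whereas you compute the sets $L_\chi=(\supp\chi\cdot K)\times_K Y$ and $A_f\cdot K$ explicitly from the formula for $\psi_1$; your route is marginally more direct since it bypasses the metric comparison of Lemma \ref{lem dG dX}. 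One small inaccuracy in your closing remark: compactness of $A_f$ uses only compactness of $K$ (making the quotient $G\times Y\to G\times_K Y$ proper), not properness of the $G$-action on $X$.
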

\begin{proof}
First, suppose that $X = G \times_K Y$ for a single slice $Y$.

Suppose $T$ is a bounded operator on $L^2(X)\otimes L^2(G)$ that is locally compact with respect to multiplication by $C_0(X)$. Let $\chi\in C_c(G)$ be given. 
As in the proof of Lemma \ref{lem prop X prop G}, Lemmas \ref{lem psi1} and \ref{lem dG dX} imply that the subset $\pi_X(\psi^{-1}(\supp\chi\times K\backslash G\times G))$ of $X$ is bounded. Hence we can choose $\phi\in C_c(X)$ such that $\phi\equiv 1$ on $\pi_X(\psi^{-1}(\supp\chi\times K\backslash G\times G))$. Thus
$$(\Psi^{-1}\circ T\circ\Psi)\chi=\Psi^{-1}\circ(T\phi\psi^*(\chi\otimes1_{K\backslash G\times Y}))\circ\Psi\in\mathcal{K}(L^2(G)\otimes H),$$
since $T\phi\in\mathcal{K}(L^2(X)\otimes L^2(G))$.

Now suppose $\tilde{T}=\Psi^{-1}\circ T\circ\Psi$ is a bounded operator on $L^2(G)\otimes H$ that is locally compact with respect to the multiplicative action of $C_0(G)$ on the first factor. Let $\phi\in C_c(X)$ be given. As in the proof of Lemma \ref{lem prop G prop X}, Lemmas \ref{lem psi1} and \ref{lem dG dX} imply that the subset  $\pi_G(\psi(\supp(\phi)\times G))$ of $G$ is bounded. Hence
 we can choose $\chi\in C_c(G)$ such that $\chi\equiv 1$ on $\pi_G(\psi(\supp(\phi)\times G))$. Then
$$T\phi=(\Psi\circ\tilde{T}\circ\Psi^{-1})\phi=\Psi\circ(\tilde{T}\chi(\psi^{-1})^*(\phi\otimes 1_G))\circ\Psi^{-1}\in\mathcal{K}(L^2(X)\otimes L^2(G)),$$
since $\tilde{T}\chi\in\mathcal{K}(L^2(G)\otimes H).$

The general case for cocompact $X$ and more than one slice follows from the single slice case as in the proof of Proposition \ref{prop prop G X}.
\end{proof}

Theorem \ref{thm exist adm} is the combination of Propositions \ref{prop prop G X} and \ref{prop loc cpt X G}. Under the assumption in Theorem \ref{thm exist adm} that $G/K$ or $X/G$ is infinite, the space $H$ in Proposition \ref{prop G rep} is infinite-dimensional.

\subsection{Kernels and group $C^*$-algebras} \label{sec kernels}

The reduced and maximal equivariant Roe algebras can alternatively be described in terms of  continuous Schwartz kernels of operators. We work this out in detail this subsection and use it to prove Theorem \ref{thm Roe group Cstar}.

Let $X$ and $G$ be as before. Suppose that $X/G$ is compact. Let $H_X$ be any admissible equivariant $C_0(X)$-module over $X$. In this subsection, we will always identify $H_X$ with $L^2(G) \otimes H$ for an infinite-dimensional separable Hilbert space $H$, using the isomorphism $\Psi$ in Definition \ref{def adm mod}.

%We will suppose that $G$ is almost-connected, so that $X\cong G\times_K Y$ for a %maximal compact subgroup $K\subset G$ and a $K$-slice $Y$; the case of general $G$ %follows from this special case. (See Remark \ref{rem kernels non ac} below.)
%Let $H_X$ be a $G$-equivariant $C_0(X)$-module. By Proposition \ref{prop L2GH}, it
%can be written in the form $L^2(G)\otimes H$ for some Hilbert space $H$. Recall the %notion of finite propagation in $G$ from Definition \ref{def fin prop G}.

% By Remark \ref{rem form of H},
% %Lemma \ref{lem one slice}, 
%  we may take 
%\beq{eq form H} 
% H=L^2(Y)\otimes L^2(K\backslash G).
% \eeq 
% 
% In what follows, we work with the admissible $X$-module 
%$$H_X=L^2(X)\otimes L^2(G)\cong L^2(G)\otimes H,$$ 
%where $H=L^2(Y)\otimes L^2(K\backslash G)$. 
%Denote by $C^*_{G,\alg}(X)$ the equivariant (algebraic) Roe algebra defined with respect to $H_X$.
%
%Let $C_0(G)$ act on $L^2(G)\otimes H$ on the first factor. In this way we can talk about the propagation in $G$ of operators on $H_X$.
\begin{definition} \label{def:admissiblekernelalgebra}
	Let $C^*_{\ker}(X)^G$ denote the algebra of bounded operators on $H_X$ defined by continuous $G$-invariant Schwartz kernels
	$$\kappa \colon G\times G\rightarrow\mathcal{K}(H)$$
	that have finite propagation in $G$. 
\end{definition}
By the `if' part of Proposition \ref{prop prop G X}, we have
$C^*_{\ker}(X)^G \subset C^*_{\alg}(X)^G$.
Our goal is to prove the following proposition.
\begin{proposition}\label{prop dense kernels}
$C^*_{\ker}(X)^G$ is dense in $C^*_{\alg}(X)^G$ with respect to the operator norm on $H_X$.
\end{proposition}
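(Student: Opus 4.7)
The plan is to produce, for each $T \in C^*_{\alg}(X)^G$ with propagation $\leq r$, a sequence in $C^*_{\ker}(X)^G$ converging to $T$ in operator norm, by mollifying $T$ with the right regular representation of $G$. Via the isomorphism $\Psi$ of Definition \ref{def adm mod}, identify $H_X$ with $L^2(G) \otimes H$, so that the $G$-action becomes $\lambda \otimes 1_H$, where $\lambda$ is the left regular representation. Choose a symmetric approximate identity $(\phi_n) \subset C_c(G)$ with $\phi_n \geq 0$, $\int_G \phi_n\,dg = 1$, and $\supp(\phi_n) \subset B_G(e, 1/n)$, and let $\rho(\phi_n) := \int_G \phi_n(g)\,\rho_g\,dg$ denote the integrated right regular representation. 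This operator commutes with $\lambda$ and has propagation $\leq 1/n$ with respect to $C_0(G)$. Set
\[
T_n := (\rho(\phi_n) \otimes 1_H)\,T\,(\rho(\phi_n) \otimes 1_H).
\]
Each $T_n$ is then $G$-equivariant, has propagation $\leq r + 2/n$, and satisfies $\|T_n\| \leq \|T\|$.

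The next step is to verify that $T_n \in C^*_{\ker}(X)^G$, i.e.\ that $T_n$ is given by a continuous $G$-invariant Schwartz kernel with values in $\mathcal{K}(H)$. Unpacking the definition of $\rho(\phi_n)$ and interchanging $T$ with integration using Bochner integrability, a direct computation on simple tensors $\phi \otimes \xi$ yields
\[
\kappa_n(g,h)\,\xi := \bigl[\rho(\phi_n)\,T(\tilde\phi_{n,h} \otimes \xi)\bigr](g), \qquad \text{where } \tilde\phi_{n,h}(g') := \phi_n(g'^{-1}h).
\]
Continuity of $\kappa_n$ in $(g,h)$ follows from continuity of $\phi_n$ (so that $\rho(\phi_n)$ maps $L^2$-functions to continuous functions, legitimising evaluation at $g$) together with $L^2$-continuity of $h \mapsto \tilde\phi_{n,h}$. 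Compactness of $\kappa_n(g,h)$ as an operator on $H$ comes from local compactness of $T$: the map $\xi \mapsto \kappa_n(g,h)\,\xi$ factors as $\xi \mapsto \tilde\phi_{n,h}\otimes\xi \mapsto \chi_h\,T(\tilde\phi_{n,h}\otimes\xi)$ followed by $\rho(\phi_n)$ and evaluation at $g$, where $\chi_h \in C_c(G)$ equals $1$ on $B_G(h, r + 1/n)$. Since $\chi_h\,T$ is compact on $L^2(G)\otimes H$, the composition is compact on $H$, so $\kappa_n(g,h) \in \mathcal{K}(H)$.

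The principal obstacle is proving operator norm convergence $T_n \to T$. Writing
\[
T - T_n = T\bigl(I - \rho(\phi_n) \otimes 1_H\bigr) + \bigl(I - \rho(\phi_n) \otimes 1_H\bigr)\,T\,\bigl(\rho(\phi_n) \otimes 1_H\bigr),
\]
the task reduces to showing $\|T(I - \rho(\phi_n) \otimes 1_H)\| \to 0$. This does not follow from mere strong convergence $\rho(\phi_n) \to I$; additional input that exploits the finite propagation and local compactness of $T$ simultaneously is required. The approach is to use a $G$-invariant decomposition: for a cutoff $c \in C_c(G)$ with $\int_G c^2\,dg = 1$ and sufficiently large support, the operator $\int_G (\lambda_g c \otimes 1_H)\,T\,(\lambda_g c \otimes 1_H)\,dg$ approximates $T$ in operator norm, with the error controlled by the deviation of $z \mapsto \int_G c(v)\,c(vz)\,dv$ from $1$ on the $r$-ball around $e$. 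Each integrand is a compact operator by local compactness of $T$, and on compact operators $\rho(\phi_n)\otimes 1_H \to I$ holds in norm. A joint limit, letting $n \to \infty$ and letting the cutoff $c$ spread out, combined with $G$-invariance to make the estimate uniform over the integral, yields the desired norm convergence.
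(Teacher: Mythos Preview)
Your mollification approach is different from the paper's and the construction of $T_n\in C^*_{\ker}(X)^G$ is correct. The paper instead writes the compact operator $T\chi$ (for a fixed cutoff $\chi$) as a norm-convergent sum $\sum a^j_k\,e_j\otimes e^k$ of rank-one operators, averages each summand over $G$ to produce explicit kernel operators $T^j_k$, and then proves $T=\sum_{j,k}T^j_k$ with norm convergence via Connes' Lemma~1.5 in \cite{Connes82}. Your route is conceptually cleaner, but both arguments ultimately rest on the same averaging estimate.

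The norm-convergence step, however, has a gap as written. Your claim that $\tilde T_c:=\int_G(g\cdot c)\,T\,(g\cdot c)\,dg$ approximates $T$ in operator norm, with error governed by $\sup_{d_G(e,z)\le r}\bigl|1-\int_G c(v)c(vz)\,dv\bigr|$, is a kernel-level heuristic: it says the Schwartz kernel of $T-\tilde T_c$ is the kernel of $T$ multiplied by a small function. But $T$ is not yet known to have a kernel --- that is exactly what is being proved --- so this justification is circular, and the ``joint limit'' with $c$ spreading is left unsupported. The remedy is to bypass $\tilde T_c$ and bound $\|T(I-\rho(\phi_n)\otimes 1_H)\|$ directly with a \emph{fixed} $c\in C_c(G)$, $\|c\|_{L^2}=1$. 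Using $\int_G(g\cdot c)^2\,dg=1$, $G$-equivariance of $T$, and that $\lambda$ commutes with $\rho$, one obtains
\[
T(I-\rho(\phi_n))f=\int_G\lambda_g\,A_n\,\lambda_g^{-1}f\,dg,\qquad A_n:=Tc^2(I-\rho(\phi_n)).
\]
Here $A_n=\one_W A_n\one_V$ for fixed compacta $W=\Pen(\supp c,r)$ and $V=\supp(c)\cdot B_G(e,1)$, and $\|A_n\|\to 0$ because $Tc^2$ is compact. Connes' lemma, applied exactly as in the paper's Lemma~\ref{lem op norm}, then yields
\[
\|T(I-\rho(\phi_n))\|\le\bigl(\vol(WW^{-1})\,\vol(V)\bigr)^{1/2}\|A_n\|\to 0,
\]
and the remaining term $(I-\rho(\phi_n))T\rho(\phi_n)$ is handled by passing to adjoints.
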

Because of this proposition, the Roe algebra $C^*(X)^G$ can alternatively be defined as the closure of $C^*_{\ker}(X)^G$ in $\cB(H_X)$.
This immediately implies Theorem \ref{thm Roe group Cstar}, since 
\[
C^*_{\ker}(X)^G \cong C_c(G) \otimes \cK(H)
\]
via the isomorphism sending $\kappa \in C^*_{\ker}(X)^G$ to the map $g\mapsto \kappa(g^{-1},e)$.

%\subsection{Density of kernels}
Let $T\in C^*_{G,\alg}(X)$. We will prove Proposition \ref{prop dense kernels} by showing that $T$ can be approximated by elements of $C^*_{\ker}(X)^G$ in the operator norm.

Fix ${\chi}\in C_c^\infty(G)$ such that %with values in $[0,1]$
for all $h \in G$,
$$\int_G{\chi}(g^{-1}h)\,dg=1.$$
Let $H$ be as in Definition \ref{def adm mod}. 
By the `only if' part of Proposition \ref{prop prop G X}, 
 $T$ has finite propagation in $G$. So there exist functions ${\chi_1},\chi_2\in C_c(G)$ such that 
 %$\chi_1:=\tilde{\chi_1}\otimes 1_H$ and $\chi_2:=\tilde{\chi_2}\otimes 1_H$ satisfy
\[
\begin{split}
\chi_1T\chi&=T\chi,\\
\chi \chi_2 &= \chi
\end{split}
\]
when $T$ is viewed as an operator on $L^2(G)\otimes H$.

Let $\{e_j\}_{j=1}^\infty$ be a Hilbert basis for $H_X \cong L^2(G, H)$ such that $e_j\in C_c(G,H)$ for every $j$. Let $\{e^k\}_{k=1}^\infty$ be the dual basis. We view $e^k$ as the element of $C_c(G, H^*)$ such that for all $g \in G$ and $v \in H$,
\[
e^k(g)(v) = (e_k(g), v)_H.
\]
By the definition of $C^*_{G,\textnormal{alg}}(X)$ and Proposition \ref{prop loc cpt X G}, we have
$T\chi\in\mathcal{K}(H_X)$. Thus we can write
\begin{equation} \label{eq sum Tchi}
	T\chi=\sum_{j,k}a_k^j e_j\otimes e^k
\end{equation}
for some constants $a_k^j$, with the sum converging in operator norm in $\mathcal{B}(L^2(G)\otimes H)$. Define $T^j_k\in\mathcal{B}(L^2(G)\otimes H)$ to be the operator given by the Schwartz kernel
$$\kappa^j_k:G\times G\rightarrow\mathcal{K}(H),$$
\beq{eq def Kjk}
(h,h')\mapsto a_k^j\int_G\chi_1(g^{-1}h) \chi_2(g^{-1}h') e_j(g^{-1}h) \otimes e^k(g^{-1}h') \,dg,
\eeq
where $h,h'\in G$. 
%For $g \in G$, $\chi_1(g)$ and $\chi_2(g)$ are bounded operators on $H$. 
Since $e_j(g) \otimes e^k(g')$ (for  $g,g' \in G$) is a finite-rank operator on $H$ and the integrand in \eqref{eq def Kjk} is compactly supported, we find that indeed $\kappa^j_k(h,h') \in \cK(H)$ for all $h,h' \in G$. Furthermore, 
 $\kappa_k^j$ is continuous, $G$-invariant, and has finite propagation in $G$. 
%Observe that we have the following:
\begin{lemma}\label{lem sum conv}
	For every $f \in L^2(G,H)$ and $h \in G$,
	\[
	(Tf)(h)=\sum_{j,k=1}^\infty T_k^j f(h).
	\]
	%
	%	We have $$T=\sum_{j,k=1}^\infty T_k^j,$$ where the sum converges in the operator norm. 
	%	%In particular, $T_k^j\in B(L^2(G))$.	
\end{lemma}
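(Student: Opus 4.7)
The plan is to use the partition-of-unity property of $\chi$ together with $G$-invariance of $T$ to rewrite $Tf$ as an integral over $G$ of the operator $T\chi$ applied to translated data, substitute the operator-norm expansion \eqref{eq sum Tchi}, and interchange sum and integral to recognise the result as $\sum_{j,k} T_k^j f$.

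First, I would exploit the identity $\int_G \chi(g^{-1}h)\,dg = 1$ to write $f = \int_G \lambda(g)(\chi\cdot\lambda(g^{-1})f)\,dg$ in $L^2(G)\otimes H$, where $\lambda$ denotes the left regular representation. Using $G$-invariance $T\lambda(g) = \lambda(g) T$, combined with $\chi = \chi\chi_2$ to replace $T(\chi\cdot\lambda(g^{-1})f)$ by $(T\chi)(\chi_2\lambda(g^{-1})f)$, one obtains
\[
Tf = \int_G \lambda(g)\,(T\chi)(\chi_2\lambda(g^{-1})f)\,dg.
\]
The support condition $\chi_1 T\chi = T\chi$ then lets me insert the cutoff $\chi_1$ into the integrand after translation by $\lambda(g)$; for fixed $h$, this makes the $g$-integral effectively supported on the compact set $\{g \in G : g^{-1}h \in \supp(\chi_1)\}$.

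Next, I would substitute the operator-norm convergent expansion \eqref{eq sum Tchi} of $T\chi$ into the integrand. Since $\|\chi_2\lambda(g^{-1})f\|_{L^2} \le \|\chi_2\|_\infty\|f\|_{L^2}$ uniformly in $g$, the series $\sum_{j,k} a_k^j\,\langle \chi_2\lambda(g^{-1})f,\,e_k\rangle\,e_j$ converges to $(T\chi)(\chi_2\lambda(g^{-1})f)$ in $L^2$ uniformly in $g$. A dominated convergence argument over the compact effective $g$-domain would then justify interchanging the $j,k$-summation with the $g$-integration, yielding
\[
(Tf)(h) = \sum_{j,k} a_k^j \int_G \chi_1(g^{-1}h)\,\langle \chi_2\lambda(g^{-1})f,\,e_k\rangle\,e_j(g^{-1}h)\,dg.
\]

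Finally, I would identify each summand with $(T_k^j f)(h)$ by a direct computation from the kernel \eqref{eq def Kjk}: Fubini's theorem and the substitution $u = g^{-1}h'$ in the $h'$-integral of $\int_G \kappa_k^j(h,h')f(h')\,dh'$ turn the inner integral into precisely $\langle \chi_2\lambda(g^{-1})f,\,e_k\rangle$, reproducing the above formula term by term. The main obstacle I expect will be the rigorous justification of the interchange of sum and integral: this requires carefully combining the operator-norm convergence of $\sum a_k^j\,e_j \otimes e^k$ to $T\chi$, the uniform $L^2$-bound on $\chi_2\lambda(g^{-1})f$, and the compact support of $g\mapsto\chi_1(g^{-1}h)$, in order to secure a clean dominated-convergence argument.
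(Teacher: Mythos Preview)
Your proposal is correct and follows essentially the same route as the paper: both start from the identity $T\circ(g\cdot\chi)=g\,\chi_1(T\chi)\chi_2\,g^{-1}$ (equivalently, your $\lambda(g)\chi_1(T\chi)\chi_2\lambda(g^{-1})$), integrate over $G$ using $\int_G\chi(g^{-1}h)\,dg=1$, substitute the expansion \eqref{eq sum Tchi}, and then identify the resulting summand with $\int_G\kappa_k^j(h,m)f(m)\,dm$ via the change of variable $m=gl$. Your justification of the sum--integral interchange (operator-norm convergence of $\sum a_k^j\,e_j\otimes e^k$, uniform $L^2$-bound on $\chi_2\lambda(g^{-1})f$, compact effective $g$-support coming from $\chi_1$) is in fact more explicit than the paper's, which simply observes that all integrands are continuous and compactly supported.
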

\begin{proof}
	Let $f\in L^2(G,H)$. Then for every $g\in G$ we have
	\begin{align*}
	T\circ(g\cdot\chi)&=g(T\chi)g^{-1}\\
	&=g\chi_1T\chi  \chi_2 g^{-1}.
	\end{align*}
	Thus for all $h\in G$,
	\begin{align*}
	(Tf)(h)&=\int_G\left(T(g\cdot\chi)f\right)(h)\,dg\\
	&=\int_G\left((g\chi_1T\chi \chi_2 g^{-1})f\right)(h)\,dg\\
	&=\sum_{j,k}a_k^j\int_G\chi_1(g^{-1}h)e_j(g^{-1}h)\left(\int_G\bigl(e_k(l), \chi_2(l)f(gl) \bigr)_{H}\,dl\right)\,dg\\
	&=\sum_{j,k}\int_G \kappa_k^j(h,m)f(m)\,dm,
	\end{align*}
	where we substitute $m =gl$. Note that all integrands are continuous and compactly supported, so we may indeed interchange integrals and sums.
\end{proof}
%We continue the proof of the proposition by establishing:
\begin{lemma} \label{lem op norm}
	The sum 
	\[
	\sum_{j,k=1}^{\infty} T_k^j
	\]
	converges in $\mathcal{B}(L^2(G)\otimes H)$ with respect to the operator norm.
\end{lemma}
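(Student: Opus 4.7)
The convergence of $\sum_{j,k}T_k^j$ will be reduced to the operator-norm convergence of the partial sums $S^0_F:=\sum_{(j,k)\in F}a_k^j\,e_j\otimes e^k$ already established in \eqref{eq sum Tchi}, via a single ``averaging'' estimate. The central technical step will be to prove: for every bounded operator $X$ on $L^2(G)\otimes H$, the operator $\int_G g\,(\chi_1 X\chi_2)\,g^{-1}\,dg$ (interpreted weakly) satisfies
\[
\Bigl\|\int_G g\,(\chi_1 X \chi_2)\,g^{-1}\,dg\Bigr\|_{\mathrm{op}} \;\leq\; \|\chi_1\|_{L^2(G)}\,\|\chi_2\|_{L^2(G)}\,\|X\|_{\mathrm{op}}.
\]
To prove this bound I would start from the matrix-coefficient inequality
\[
\Bigl|\Bigl\langle \int_G g\chi_1 X\chi_2 g^{-1}\,dg\; f_1,\,f_2\Bigr\rangle\Bigr| \;\leq\; \|X\|\int_G \|\chi_2 g^{-1}f_1\|\,\|\chi_1 g^{-1}f_2\|\,dg,
\]
apply Cauchy--Schwarz in the $g$-integral, and use the Fubini identity
\[
\int_G \|\chi_j\,(g^{-1}f)\|^2_{L^2(G,H)}\,dg \;=\; \|\chi_j\|_{L^2(G)}^2\,\|f\|^2,
\]
which follows from left-invariance and unimodularity of Haar measure.

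Granted the estimate, the argument proceeds formally. Unwinding the kernel formula \eqref{eq def Kjk} exhibits each $T_k^j$ as a $G$-conjugation average of a rank-one operator,
\[
T_k^j \;=\; \int_G g\,\bigl(\chi_1\, a_k^j\,(e_j\otimes e^k)\,\chi_2\bigr)\,g^{-1}\,dg,
\]
so that for any finite $F\subset\mathbb{N}\times\mathbb{N}$ the partial sum satisfies $S_F := \sum_{(j,k)\in F}T_k^j = \int_G g\,(\chi_1 S^0_F \chi_2)\,g^{-1}\,dg$. Applying the key bound to $X = S^0_{F'}-S^0_F$ yields
\[
\|S_{F'}-S_F\|_{\mathrm{op}} \;\leq\; \|\chi_1\|_{L^2(G)}\,\|\chi_2\|_{L^2(G)}\,\|S^0_{F'}-S^0_F\|_{\mathrm{op}},
\]
and since $S^0_F\to T\chi$ in operator norm by \eqref{eq sum Tchi}, the net $\{S_F\}$ is Cauchy in $\mathcal{B}(L^2(G)\otimes H)$ and therefore converges in operator norm. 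Lemma~\ref{lem sum conv} identifies the limit with $T$, since operator-norm convergence implies strong convergence and the strong limit has already been computed pointwise.

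The one step that is not routine is the key estimate itself. A careless bound replacing the integrand by its operator norm would produce the meaningless constant $\mathrm{vol}(G)\cdot\|\chi_1 X\chi_2\|$, so one must pair against concrete vectors $f_1,f_2$ before doing the $g$-integral and exploit the compact supports of $\chi_1$ and $\chi_2$ via the Fubini identity above. Once that computation is carried out, everything else is bookkeeping.
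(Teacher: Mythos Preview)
Your argument is correct and is, in fact, cleaner than the paper's. Both proofs start from the same identity
\[
T_k^j \;=\; \int_G g\bigl(\chi_1\, a_k^j (e_j\otimes e^k)\,\chi_2\bigr)g^{-1}\,dg
\]
and reduce convergence of $\sum T_k^j$ to a uniform bound $\|\int_G g(\chi_1 X\chi_2)g^{-1}\,dg\|\le C\|X\|$ applied to tails of the operator-norm convergent sum \eqref{eq sum Tchi}. The difference lies in how the constant $C$ is obtained. The paper applies the integrand to a fixed $f$, observes that the vectors $F(g):=(g\cdot\chi_1)(gXg^{-1})(g\cdot\chi_2)f$ are mutually orthogonal unless $g^{-1}g'$ lies in a fixed compact set $S$ (coming from $\supp\chi_1$), and then invokes an almost-orthogonality lemma of Connes--Moscovici to get $C=\vol(S)^{1/2}\|\chi_1\|_\infty\|\chi_2\|_G$. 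You instead pair with two test vectors, bound the matrix coefficient pointwise by $\|X\|\,\|\chi_2 g^{-1}f_1\|\,\|\chi_1 g^{-1}f_2\|$, and use Cauchy--Schwarz in $g$ together with the Fubini computation $\int_G\|\chi_j\,(g^{-1}f)\|^2\,dg=\|\chi_j\|_{L^2(G)}^2\|f\|^2$ (which only needs unimodularity) to get $C=\|\chi_1\|_{L^2}\|\chi_2\|_{L^2}$.

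Your route avoids the external reference and the almost-orthogonality machinery entirely; the paper's route is a bit more geometric in that it makes explicit why the integral converges (the translates $g\cdot\chi_1$ have ``almost disjoint'' supports), but yours is shorter and self-contained. Either way, once the bound is in hand the Cauchy-sequence conclusion is identical.
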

\begin{proof}
	We have for all $j,k\in \N$ and $f\in C_c(G,H)$,
	\begin{align}\label{eq Tjkf}
	T^{j}_kf &= a^j_k \int_G (g\cdot (\chi_1 e_j)) (g \cdot(\chi_2  e_k), f)_{L^2(G,H)}\, dg\nonumber\\
	&= a^j_k \int_G (g \cdot \chi_1) (g\circ( e_j \otimes e^k) \circ g^{-1}) (g\cdot \chi_2)f\, dg.
	\end{align}
	
	Hence for all $M,N, M', N' \in \N$ with $M \leq M'$ and $N \leq N'$,
	\[
	\begin{split}
	\Bigl\| \sum_{j=M}^{M'} \sum_{k=N}^{N'} T^j_k f \Bigr\|_{L^2(G,H)} &= 
	\Bigl\| \sum_{j=M}^{M'} \sum_{k=N}^{N'} a^j_k \int_{G}  (g \cdot \chi_1) (g\circ( e_j \otimes e^k) \circ g^{-1}) (g\cdot \chi_2)f \, dg \Bigr\|_{L^2(G, H)} \\
	&= 
	\Bigl\|  \int_{G} (g\cdot \chi_1)  \Bigl(g \circ \Bigl(\sum_{j=M}^{M'} \sum_{k=N}^{N'} a^j_k   e_j  \otimes  e^k \Bigr) \circ g^{-1}\Bigr) (g\cdot \chi_2)   f\, dg \Bigr\|_{L^2(G,H)}. 
	\end{split}
	\]
	Write
	\[
	T_{M,N}^{M',N'}  :=  \sum_{j=M}^{M'} \sum_{k=N}^{N'} a^j_k  e_j  \otimes  e^k.
	\]
	Define $F \colon G \to L^2(G,H)$ by
	\[
	F(g) = (g\cdot \chi_1) (g\circ T_{M,N}^{M',N'} \circ g^{-1}) (g\cdot \chi_2) f,
	\]
	for $g \in G$. If $g,g' \in G$, and $(F(g), F(g'))_{L^2(G,H)} \not =0$, then 
	\[
	g\supp(\chi_1) \cap g'\supp(\chi_1) \not= \emptyset.
	\]
	By properness of the action, this means that $g^{-1}g'$ lies in a compact set $S \subset G$, only depending on $\chi_1$.
	%
	%
	%Hence $x^{-1}y$ lies in the compact set $K := \supp(\chi_1) \cdot \supp(\chi_1)^{-1}$. 
	By Lemma 1.5 in \cite{Connes82}, this implies that
	\[
	\Bigl\|  \int_{G}F(g) \, dg \Bigr\|_{L^2(G,H)}^2 \leq \vol(S)  \int_{G}\| F(g)\|_{L^2(G,H)}^2 \, dg. 
	\]
	Hence, since $G$ acts unitarily on $L^2(G,H)$,
	\[
	\begin{split}
	\Bigl\| \sum_{j=M}^{M'} \sum_{k=N}^{N'} T^j_k f \Bigr\|_{L^2(G,H)}^2 &
	\leq \vol(S)  \|\chi_1\|_{\infty}^2 \|T_{M,N}^{M',N'}\|_{\cB(L^2(G,H))}^2 \int_G \|(g\cdot \chi_2)f\|_{L^2(G, H)}^2 \, dg \\
	&\leq \vol(S)  \|\chi_1\|_{\infty}^2 \|T_{M,N}^{M',N'}\|_{\cB(L^2(G,H))}^2  \|\chi_2 \|_G^2    \|f\|_{L^2(G,H)}^2,
	\end{split}
	\]
	where
	\[
	\|\chi_1 \|_\infty := \max_{g \in G}||\chi_1(g)||_{\mathcal{B}(H)},
	\]
	\[
	\|\chi_2 \|_G := \sqrt{\max_{h \in G} \int_G||\chi_2(g^{-1}h)||_{\mathcal{B}(H)}^2\, dg}.
	\]
	We conclude that the operator
	\[
	\sum_{j=M}^{M'} \sum_{k=N}^{N'} T^j_k
	\]
	on $L^2(G,H)$ is bounded, with norm at most 
	\[
	\vol(S)^{1/2}  \|\chi_1\|_{\infty}    \|\chi_2 \|_{G} \|T_{M,N}^{M',N'}\|_{\cB(L^2(G,H))}.
	\]
		Since % $\|T_{M,N}^{M',N'}\|_{\cB(L^2(G))}$ goes to zero as $M,N, M', N' \to \infty$, the claim follows.
	the  sum \eqref{eq sum Tchi} converges in the operator norm and $\cB(L^2(G,H))$ is complete, the claim follows.
\end{proof}

\begin{proof}[Proof of Proposition \ref{prop dense kernels}]
By Lemmas \ref{lem sum conv} and \ref{lem op norm}, we have
	\[
	T = \sum_{j,k=1}^{\infty}T^j_k,
	\]
	where the sum converges in the operator norm. Hence $C^*_{G,\ker}(X)$ is dense in $C^*_{G,\alg}(X)$.
\end{proof}

\section{The equivariant Callias index} \label{sec pf Callias}

In this section, we prove Theorem \ref{thm Callias}, showing that the equivariant index of a $G$-Callias-type operator, as defined in \cite{Guo18}, identifies naturally with its localised equivariant index given by Definition \ref{def loc index ell}. We begin in Subsection \ref{sec ind maps cocpt} by relating the equivariant coarse index defined in Subsection \ref{sec loc ind} for cocompact actions to the usual $G$-equivariant index obtained through the assembly map, before relating the localised equivariant index to the non-cocompact $G$-equivariant index in Subsection \ref{sec ind maps noncocpt}. With respect to the notation in Subsection \ref{sec loc ind}, we are working with $D^*(X)^G=\mathcal{M}(C^*(X)^G)$ or $\mathcal{M}(C^*(X)^G_{\loc})$, depending on context. 

The results in the first two subsections of this section are of a general nature and apply to both the maximal and reduced versions of the index, and we will use $C^*(G)$ will denote either $C^*_{\red}(G)$ or $C^*_{\max}(G)$, and $C^*(X)^G$ (resp. $C^*(X)^G_{\loc}$) for either the reduced or maximal version of the Roe algebra (resp. localised Roe algebra).

\subsection{Index maps in the cocompact case} \label{sec ind maps cocpt}
Suppose that $X$ is $G$-cocompact. 
%
%
%Recall that $L^2(E)$ is not necessarily an admissible module over $X$ and $G$ when the action is not free. 
%%Therefore we embed $L^2(E)$ into the admissible module $L^2(E)\otimes L^2(G)$ via the map 
%%$$i:s\mapsto(h\mapsto(h\chi)s),$$
%%where $\chi$ is a cut-off function on $X$. 
%%This embedding is equivariant and isometric. 
%Consider the equivariant isometric embedding  \eqref{eq def j} of $L^2(E)$ into the admissible module $L^2(E) \otimes L^2(G)$. (\Todo: where do we use this?)
%By Proposition \ref{prop L2GH}, $L^2(E)\otimes L^2(G)$ is $G$-equivariantly unitarily equivalent to $L^2(G)\otimes H$, where $L^2(G)$ is equipped with the left-regular $G$-representation and $H$ is an infinite-dimensional separable Hilbert space equipped with the trivial $G$-representation.
%
Equip the dense subspace $C_c(G,L^2(E))$ of $L^2(E)\otimes L^2(G)$ with the $C_c(G)$-valued inner product
$$\langle s,t\rangle(g):=\langle s,gt\rangle_{L^2(E)\otimes L^2(G)}$$
and the right action of $C_c(G)$ defined by
$$s\cdot b:=\int_G g^{-1}(b(g)s)\,dg.$$
Taking the completion gives rise to a Hilbert $C^*(G)$-module $\mathcal{E}_{C^*(G)}.$
\begin{lemma}
\label{lem:standardiso}
$\mathcal{E}_{C^*(G)}$ is isomorphic to the standard Hilbert $C^*(G)$-module $C^*(G) \otimes H$, for a separable Hilbert space $H$.
\end{lemma}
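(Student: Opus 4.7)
The plan is to use Proposition~\ref{prop G rep} to replace the underlying $G$-representation with a tractable one, then identify the resulting Hilbert module directly. By that proposition, applied to the possibly nontrivial bundle $E$ (as noted in its footnote), there exist a separable Hilbert space $H$ and a $G$-equivariant unitary isomorphism
$$\Psi: L^2(G) \otimes H \xrightarrow{\cong} L^2(E) \otimes L^2(G),$$
where $G$ acts on the left-hand side by $\lambda \otimes 1$, with $\lambda$ the left-regular representation on the first factor. Since the construction of $\mathcal{E}_{C^*(G)}$ depends only on the underlying $G$-representation on $L^2(E) \otimes L^2(G)$, the map $\Psi$ should induce the desired isomorphism of Hilbert $C^*(G)$-modules.

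The first step would be to perform the key computation directly on $L^2(G) \otimes H$. Taking the $G$-invariant dense subspace $C_c(G) \otimes H$ and using the action $\lambda \otimes 1$, one checks that for $a, b \in C_c(G)$ and $v, w \in H$,
$$\langle a \otimes v, (\lambda(g) \otimes 1)(b \otimes w)\rangle = \langle v, w\rangle_H \int_G \overline{a(h)}\, b(g^{-1}h)\, dh,$$
which, as a function of $g$, is $\langle v, w\rangle_H$ times a convolution-type expression in $C_c(G)$. Up to the standard conventions for the $*$-operation on $C_c(G)$, this is precisely the standard $C^*(G)$-valued inner product on a dense subspace of $C^*(G) \otimes H$; an analogous direct verification handles the right $C_c(G)$-action. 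Thus the completion of $C_c(G) \otimes H$ with respect to the induced Hilbert module norm is exactly the standard Hilbert $C^*(G)$-module $C^*(G) \otimes H$.

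The second step is to promote $\Psi$ to a unitary isomorphism of Hilbert modules $C^*(G) \otimes H \to \mathcal{E}_{C^*(G)}$. Because $\Psi$ is a $G$-equivariant isometry, it automatically intertwines the formulas $\langle s, t\rangle(g) = \langle s, gt\rangle$ and $s \cdot b = \int_G g^{-1}(b(g)s)\, dg$ defined on any $G$-invariant dense subspace of either Hilbert space. Taking completions should then give the claimed isomorphism. The main technical subtlety — and what I expect to be the main obstacle — is that $\Psi$ need not map the elementary dense subspace $C_c(G) \otimes H$ into $C_c(G, L^2(E))$. To resolve this I would argue that the Hilbert $C^*(G)$-module obtained by completing any $G$-invariant dense subspace of a unitary $G$-representation (on which the $C_c(G)$-valued inner product is well defined and takes values in $C_c(G)$) depends, up to unitary isomorphism of Hilbert $C^*(G)$-modules, only on the ambient $G$-representation. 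This is a standard density and continuity argument, which reduces the lemma to Step~1.
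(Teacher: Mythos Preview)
Your proposal is correct and follows essentially the same route as the paper: use Proposition~\ref{prop G rep} to replace $L^2(E)\otimes L^2(G)$ by $L^2(G)\otimes H$, then identify the resulting Hilbert module with the standard one. The one place where the paper is sharper is your ``up to the standard conventions'' step: the paper makes this explicit via the unitary $s\mapsto\tilde s$, $\tilde s(g)=s(g^{-1})$, which turns your expression $\langle v,w\rangle_H\int_G\overline{a(h)}\,b(g^{-1}h)\,dh$ into $\langle v,w\rangle_H\,(\tilde a^{\,*}*\tilde b)(g)$, i.e.\ the standard $C^*(G)$-valued inner product on $C^*(G)\otimes H$.
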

\begin{proof}
Let $H$ be the Hilbert space in the isomorphism 
\beq{eq iso L2GH}
L^2(E)\otimes L^2(G)\cong L^2(G)\otimes H
\eeq
from Theorem \ref{prop G rep}. Let $\mathcal{E}'_{C^*(G)}$ denote the Hilbert $C^*(G)$-module completion of $C_c(G)\otimes H$ with respect to the $C_c(G)$-valued inner product and right $C_c(G)$-action 
$$\langle s,t\rangle(g):=\langle s,gt\rangle_{L^2(G)\otimes H},\qquad s\cdot b:=\int_G g^{-1}(b(g)s)\,dg,$$ 
where $s,t\in L^2(G,H).$ Then the isomorphism \eqref{eq iso L2GH}, restricted to the dense subspace $C_c(G, L^2(E))\subseteq L^2(E)\otimes L^2(G)$, extends to an isomorphism $\mathcal{E}_{C^*(G)}\cong\mathcal{E}'_{C^*(G)}.$ Further, one can check that the map
$$\mathcal{E}'_{C^*(G)}\rightarrow C^*(G)\otimes H,\qquad s\mapsto\tilde{s},$$
where $\tilde{s}$ takes $g\mapsto s(g^{-1}),$ is an isometric isomorphism of $\mathcal{E}'_{C^*(G)}$ onto the standard Hilbert $C^*(G)$-module equipped with its usual inner product and right $C^*(G)$-action.
\end{proof}
Using Lemma \ref{lem:standardiso}, we can write down an identification $$U:\mathcal{K}(\mathcal{E}_{C^*(G)})\cong\mathcal{K}(C^*(G)\otimes H).$$

Now let $C^*(X)^G$ denote the  $G$-equivariant Roe algebra on $L^2(E)\otimes L^2(G)\cong L^2(G)\otimes H$, and let $C^*_{\textnormal{ker}}(X)^G$ be its dense subalgebra of $G$-invariant kernels from Definition \ref{def:admissiblekernelalgebra}. Let $W$ denote the identification
\[
W:C^*_{{\ker}}(X)^G\cong C_c(G)\otimes\mathcal{K}(H)
\]
below Proposition \ref{prop dense kernels}.
%Let
%$$V:C_c(G)\otimes\mathcal{K}(H)\rightarrow C_c(G)\otimes\mathcal{K}(H),\qquad f\mapsto\tilde{f},$$
%where $\tilde{f}(g):=f(g^{-1}).$ Thus the composition $V\circ W$ 
This map
identifies $C^*_{\ker}(X)^G$ with a subalgebra of the compact operators on the standard Hilbert $C^*(G)$-module $C^*(G)\otimes H$:
$$W:C^*_{\ker}(X)^G\xrightarrow{\cong}\underbrace{C_c(G)\otimes\mathcal{K}(H)}_{\subseteq\mathcal{K}\left(\mathcal{E}'_{C^*(G)}\right)}\xrightarrow{\cong}\underbrace{C_c(G)\otimes\mathcal{K}(H)}_{\subseteq\mathcal{K}\left(C^*(G)\otimes H\right)}.$$
This extends to an identification $W:C^*(X)^G\xrightarrow{\cong}\mathcal{K}(C^*(G)\otimes H)$. 

Let $\mathcal{M}$ be the multiplier algebra of $C^*(X)^G$, and let $\mathcal{L} := \mathcal{L}(\mathcal{E}_{C^*(G)})$ be the algebra of adjointable operators on $\mathcal{E}_{C^*(G)}$.
\begin{corollary}
\label{cor:roecompactoperators}
We have an isomorphism
$$U^{-1}\circ W:C^*(X)^G\xrightarrow{\cong}\mathcal{K}(\mathcal{E}_{C^*(G)}).$$
This induces an isomorphism on the multiplier algebras and an isomorphism on $K$-theory of the quotient algebras:
\[
(U^{-1}\circ W)_*:K_1(\mathcal{M}/C^*(X)^G)\xrightarrow{\cong}K_1(\mathcal{L}/\mathcal{K}(\mathcal{E}_{C^*(G)})).
\]
\end{corollary}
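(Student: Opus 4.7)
The plan is to assemble the statement directly from the two isomorphisms established immediately above the corollary. The first isomorphism $W: C^*(X)^G \xrightarrow{\cong} \mathcal{K}(C^*(G)\otimes H)$ was obtained by extending the kernel-level identification $C^*_{\ker}(X)^G \cong C_c(G)\otimes \mathcal{K}(H)$ from Proposition \ref{prop dense kernels}, using density of $C^*_{\ker}(X)^G$ in $C^*(X)^G$ together with the fact that $C_c(G)\otimes \mathcal{K}(H)$ sits densely in $\mathcal{K}(C^*(G)\otimes H)$. The second isomorphism $U: \mathcal{K}(\mathcal{E}_{C^*(G)}) \xrightarrow{\cong} \mathcal{K}(C^*(G)\otimes H)$ is the one induced on compact operators by the Hilbert module isomorphism of Lemma \ref{lem:standardiso}. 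Both $W$ and $U$ are $*$-isomorphisms of $C^*$-algebras, and so the composition $U^{-1}\circ W$ yields the desired isomorphism $C^*(X)^G \xrightarrow{\cong} \mathcal{K}(\mathcal{E}_{C^*(G)})$.

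For the statement about multiplier algebras, I would invoke the standard fact that any $*$-isomorphism $\varphi: A \xrightarrow{\cong} B$ of $C^*$-algebras extends uniquely to a $*$-isomorphism $\widetilde{\varphi}: \mathcal{M}(A) \xrightarrow{\cong} \mathcal{M}(B)$ of multiplier algebras via strict continuity. Applied to $U^{-1}\circ W$, this yields an isomorphism
\[
\mathcal{M} = \mathcal{M}(C^*(X)^G) \xrightarrow{\cong} \mathcal{M}(\mathcal{K}(\mathcal{E}_{C^*(G)})) = \mathcal{L}(\mathcal{E}_{C^*(G)}) = \mathcal{L},
\]
where the penultimate equality is the standard identification of the multiplier algebra of compact operators on a Hilbert $C^*$-module with the algebra of adjointable operators on that module.

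For the final part, the isomorphisms at the level of the ideal $C^*(X)^G \cong \mathcal{K}(\mathcal{E}_{C^*(G)})$ and of the ambient algebras $\mathcal{M} \cong \mathcal{L}$ fit into a map of short exact sequences, and hence induce a $*$-isomorphism on the quotient algebras $\mathcal{M}/C^*(X)^G \xrightarrow{\cong} \mathcal{L}/\mathcal{K}(\mathcal{E}_{C^*(G)})$. Applying the $K_1$ functor then gives the claimed isomorphism.

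I do not expect a substantive obstacle here: the corollary is a bookkeeping assembly of already established pieces. The only point requiring a moment of care is verifying that $W$, which was initially constructed at the algebraic kernel level, genuinely extends to a $*$-isomorphism of $C^*$-algebras with image exactly $\mathcal{K}(C^*(G)\otimes H)$; this is handled by Proposition \ref{prop dense kernels} together with the observation that under the identification of Lemma \ref{lem:standardiso}, operators with continuous compactly supported $\mathcal{K}(H)$-valued kernels correspond precisely to a norm-dense $*$-subalgebra of $\mathcal{K}(C^*(G)\otimes H)$.
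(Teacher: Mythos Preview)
Your proposal is correct and is precisely the reasoning implicit in the paper. The paper does not give a separate proof of this corollary; it simply records it as an immediate consequence of the identifications $W:C^*(X)^G\xrightarrow{\cong}\mathcal{K}(C^*(G)\otimes H)$ and $U:\mathcal{K}(\mathcal{E}_{C^*(G)})\xrightarrow{\cong}\mathcal{K}(C^*(G)\otimes H)$ set up in the paragraphs just before, together with the standard facts about multiplier algebras that you spell out.
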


Now let $$\eta:K_0(\mathcal{K}(C^*(G)\otimes H))\rightarrow K_0(C^*(G))$$
be the stabilisation isomorphism on $K$-theory, and write 
$$\phi:=\eta\circ W_*,$$
where $W_*$ is the map on $K$-theory induced by $W$. After making these identifications, the following proposition follows directly from naturality of boundary maps with respect to $*$-homomorphisms.
\begin{proposition}
\label{prop:identifyingindices}
The following diagram commutes:
\begin{equation}
\xymatrix{
	K_{1}(\mathcal{M}/C^*(X)^G))\ar[dd]^{(U^{-1}\circ W)_*}\ar[rr]^{\ind} && K_{0}(C^*(X)^G)\ar[drr]^{\phi}&&\\
	&& && K_0(C^*(G)),\\
	K_{1}(\mathcal{L}/\mathcal{K}(\mathcal{E}_{C^*(G)}))\ar[rr]^{\ind} && K_{0}(\mathcal{K}(\mathcal{E}_{C^*(G)}))\ar[urr]^{\eta\circ U_*}&&\\
}
\end{equation}
where $U_*$ is the map induced by $U$ on $K$-theory.
\end{proposition}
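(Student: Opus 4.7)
The plan is to split the diagram into the left square and the right triangle and verify each separately, using essentially just the naturality of the boundary map and the fact that $V := U^{-1}\circ W$ is a $*$-isomorphism.

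First, I would observe that by Corollary \ref{cor:roecompactoperators}, the composition $V = U^{-1}\circ W$ is a $*$-isomorphism $C^*(X)^G \xrightarrow{\cong} \mathcal{K}(\mathcal{E}_{C^*(G)})$. A $*$-isomorphism of $C^*$-algebras extends uniquely to an isomorphism of their multiplier algebras, so $V$ induces a morphism of short exact sequences
\[
\xymatrix{
0 \ar[r] & C^*(X)^G \ar[r] \ar[d]^{V} & \mathcal{M} \ar[r] \ar[d]^{\widetilde V} & \mathcal{M}/C^*(X)^G \ar[r] \ar[d]^{\overline V} & 0 \\
0 \ar[r] & \mathcal{K}(\mathcal{E}_{C^*(G)}) \ar[r] & \mathcal{L} \ar[r] & \mathcal{L}/\mathcal{K}(\mathcal{E}_{C^*(G)}) \ar[r] & 0
}
\]
in which all vertical arrows are isomorphisms (with $\overline V$ equal to the map $(U^{-1}\circ W)_*$ appearing in the statement).

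Next, I would invoke the naturality of the boundary map in the six-term exact sequence in $K$-theory: applied to the morphism of extensions above, it gives commutativity of the square
\[
\xymatrix{
K_1(\mathcal{M}/C^*(X)^G) \ar[r]^-{\ind} \ar[d]_-{(U^{-1}\circ W)_*} & K_0(C^*(X)^G) \ar[d]^-{V_*} \\
K_1(\mathcal{L}/\mathcal{K}(\mathcal{E}_{C^*(G)})) \ar[r]^-{\ind} & K_0(\mathcal{K}(\mathcal{E}_{C^*(G)})),
}
\]
which is precisely the left portion of the diagram in the proposition, noting that the right vertical arrow $V_*$ coincides with $(U^{-1}\circ W)_*$ at the level of $K_0$ of the ideals.

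Finally, for the right triangle, I would compute directly: since $V = U^{-1}\circ W$, functoriality gives $U_* \circ V_* = U_* \circ U_*^{-1} \circ W_* = W_*$, and therefore
\[
(\eta \circ U_*) \circ V_* = \eta \circ W_* = \phi
\]
by definition of $\phi$. Stacking this triangle on the right of the naturality square produces the diagram of the proposition. There is essentially no obstacle here; the only subtlety worth checking carefully is that $V$ really does extend to an isomorphism $\mathcal{M}\cong\mathcal{L}$ and that this extension is the one implicitly used when defining the maps in the statement, so that the induced map on quotients is indeed the $(U^{-1}\circ W)_*$ appearing on the left-hand vertical arrow.
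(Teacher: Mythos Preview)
Your proposal is correct and matches the paper's approach exactly: the paper simply states that the proposition ``follows directly from naturality of boundary maps with respect to $*$-homomorphisms,'' and your argument spells out precisely this, together with the elementary observation that $(\eta\circ U_*)\circ(U^{-1}\circ W)_* = \eta\circ W_* = \phi$. Your only caveat about the extension of $V$ to multipliers is standard and poses no difficulty.
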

The map $(\eta\circ U_*\circ\ind)$ is the usual $G$-index map for Fredholm operators in the sense of Hilbert $C^*(G)$-modules on the module $\mathcal{E}_{C^*(G)}$. Thus Proposition \ref{prop:identifyingindices} provides an identification of the index map in the Roe algebra picture with the usual notion of $G$-index for operators on a $G$-cocompact space.
\subsection{The localised equivariant index}
\label{sec ind maps noncocpt}
Now suppose that $X/G$ is possibly noncompact. As before,  let $E$ be a $G$-vector bundle over $X$. Similar to the previous subsection, equip the dense subspace $C_c(G,L^2(E))$ of $L^2(E)\otimes L^2(G)$ with the $C_c(G)$-valued inner product and right $C_c(G)$-action given by
$$\langle s,t\rangle(g):=\langle s,gt\rangle_{L^2(E)\otimes L^2(G)},\qquad s\cdot b:=\int_G g^{-1}(b(g)s)\,dg.$$
Taking the completion gives rise to a Hilbert $C^*(G)$-module $\mathcal{E}_{C^*(G)}.$

%Note that the restricted space $L^2(E|_Z)\otimes L^2(G)$ is an admissible module over $Z$. Let $C^*_{G,\textnormal{max}}(X;Z)$ denote the closure of the subalgebra of $G$-invariant, finite propagation operators $T$ for which there exists a constant $C_T>0$ such that
%$$d(\textnormal{supp}(T),Z)<C_T,$$
%with respect to the maximal $C^*$-norm. 
Let $Z\subset X$ be closed and $G$-invariant. Let $H_X$ be an admissible equivariant $C_0(X)$-module.
The restriction map $C_0(X)\to C_0(Z)$ allows us to view $H_X$ a  as a $C_0(Z)$-module, which will be degenerate (see Definition \ref{def C0X module}) in general. Let $H_Z=\one_Z H_X$, an admissible $C_0(Z)$-module. Write $H_{X\setminus Z}$ for the orthogonal complement of $H_Z$ in $H_X$. The map
\beq{eq phi incl B}
\varphi_Z^X\colon \cB(H_Z) \to \cB(H_X),
\eeq
defined by extending operators by zero on $H_{X\setminus Z}$, restricts to a $*$-homomorphism
\beq{eq phi incl Roe}
\varphi_Z^X\colon C^*(Z)^G\to C^*(X; H_X)^G,
\eeq
whose image lies in $C^*(X; Z, H_X)^G$.

Let $C^*(X)^G_{\loc}$ be the localised equivariant Roe algebra of Definition \ref{def loc Roe}. 
\begin{proposition}
We have
\[
C^*(X)^G_{\loc} \cong \cK(\cE_{C^*(G)}).
\]
\end{proposition}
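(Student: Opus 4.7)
The plan is to mimic the argument of Corollary \ref{cor:roecompactoperators}, which identified $C^*(X)^G$ with $\cK(\cE_{C^*(G)})$ in the cocompact case, and adapt it to the localised setting. The starting observation is that $\cE_{C^*(G)}$ is a standard Hilbert $C^*(G)$-module even when $X/G$ is noncompact. Indeed, applying Fell absorption (Lemma \ref{lem Fell}) to the unitary $G$-representation on $L^2(E)$ yields a $G$-equivariant unitary
\[
\Phi\colon L^2(G)\otimes L^2(E)\to L^2(E)\otimes L^2(G)
\]
intertwining $\lambda\otimes 1$ with the diagonal action. Transferring the Hilbert $C^*(G)$-module structure along $\Phi$ gives $\cE_{C^*(G)}\cong C^*(G)\otimes L^2(E)$, so $\cE_{C^*(G)}$ is the standard Hilbert $C^*(G)$-module (since $L^2(E)$ is separable and infinite-dimensional). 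In particular, $\cK(\cE_{C^*(G)})\cong C^*(G)\otimes \cK$, matching $C^*(X)^G_{\loc}\cong C^*(G)\otimes \cK$ from \eqref{eq loc Roe group Cstar}.

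To promote this abstract agreement to a natural $*$-isomorphism, I would extend Proposition \ref{prop dense kernels} to the localised setting: a dense subalgebra $C^*_{\ker}(X;Z,H_X)^G\subset C^*(X)^G_{\loc}$ consists of $G$-invariant operators with Schwartz kernels $\kappa\colon G\times G\to\cK(L^2(E))$ of finite $G$-propagation and with $X$-support contained in $\Pen(Z,r)\times \Pen(Z,r)$ for some $r>0$. The proof of Proposition \ref{prop dense kernels} carries over, with the cutoffs $\chi_1,\chi_2$ in \eqref{eq def Kjk} chosen so that the approximating kernels $\kappa_k^j$ inherit the support-near-$Z$ property from the original operator. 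Each such kernel defines a compact Hilbert-module operator on $\cE_{C^*(G)}$ via the assignment $\kappa\mapsto (g\mapsto \kappa(g^{-1},e))\in C_c(G)\otimes\cK(L^2(E|_{\Pen(Z,r)}))$, which embeds into $C^*(G)\otimes\cK(L^2(E))\cong \cK(\cE_{C^*(G)})$. The union over $r$ of these subalgebras is dense on both sides, and the resulting $*$-homomorphism is isometric by uniqueness of the $C^*$-norm on $C^*(G)\otimes\cK$ (nuclearity of $\cK$), so it extends to the desired isomorphism
\[
C^*(X)^G_{\loc}\xrightarrow{\cong} \cK(\cE_{C^*(G)}).
\]

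The main subtlety will be verifying that the kernel approximation of Proposition \ref{prop dense kernels} preserves the support-near-$Z$ condition, that is, checking that the approximating $\kappa_k^j$ from \eqref{eq def Kjk} are supported in $\Pen(Z,r')\times \Pen(Z,r')$ for some $r'>r$ whenever $T$ is supported in $\Pen(Z,r)$. Once this is in place, the rest of the argument is a direct extension of the cocompact case of Corollary \ref{cor:roecompactoperators}, with Fell absorption replacing the slice-decomposition isomorphism of Proposition \ref{prop G rep}; the kernel-to-operator map and the identification of dense subalgebras proceed formally as before.
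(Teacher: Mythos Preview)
Your opening observation is correct and even slightly cleaner than the paper's Lemma \ref{lem:standardiso}: Fell absorption indeed shows directly that $\cE_{C^*(G)}\cong C^*(G)\otimes L^2(E)$, without invoking a slice decomposition. The paper instead reaches the statement via a direct-limit argument: it writes $C^*(X)^G_{\loc}=\varinjlim_i C^*(\Pen(Z,i))^G$ using \eqref{eq loc Roe inj lim}, applies the cocompact case (Corollary \ref{cor:roecompactoperators}) on each $\Pen(Z,i)$, checks that the resulting isomorphisms are compatible with the inclusions, and then uses $\cK(\cE_{C^*(G)})=\varinjlim_i\cK(\cE|_{\Pen(Z,i)})$.

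Where your proposal has a genuine gap is the sentence ``with Fell absorption replacing the slice-decomposition isomorphism of Proposition \ref{prop G rep}.'' Fell absorption is a $G$-equivariant unitary, but it does \emph{not} satisfy the propagation conditions of Definition \ref{def adm mod}. Under $\Phi$, multiplication by $\chi\in C_0(G)$ on the first tensor factor is carried to multiplication by $\chi$ on the $L^2(G)$-factor of $L^2(E)\otimes L^2(G)$, while the $C_0(X)$-action lives on the $L^2(E)$-factor; these two actions are on different factors and are not coarsely intertwined by $\Phi$. The content of Propositions \ref{prop prop G X} and \ref{prop loc cpt X G} is precisely that the slice-based map $\Psi$ mixes the $X$- and $G$-coordinates so as to match the two propagation notions; the measurable bijection $\psi$ of Lemma \ref{lem measurable iso} is what makes this work, and it has no analogue for Fell absorption. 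Consequently, your proposed kernel algebra with values in $\cK(L^2(E))$ is not the one to which Proposition \ref{prop dense kernels} applies, and there is no reason its operators should exhaust $C^*_{\alg}(X;Z,H_X)^G$.

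If you try to repair this by working on each $\Pen(Z,r)$ with the slice-based $\Psi$, the approximating kernels land in $\cK(H_r)$ for the slice space $H_r$ of $\Pen(Z,r)$, and passing to the union forces you to track the compatibilities between different $r$---at which point you have reproduced the paper's direct-limit proof. So the cleanest fix is simply to run the limit argument; your Fell-absorption remark can still be used, if you like, as an alternative proof that $\cE_{C^*(G)}$ is standard.
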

%\Todo: does this work for both the maximal and reduced Roe algebra?
\begin{proof}
Fix $Z\subseteq X$ a closed, $G$-invariant cocompact subset. For $i>0$, let $\Pen(Z,i)$ be as in \eqref{eq def Pen}. Then $\Pen(Z,i)$ is cocompact and $G$-stable, so by \eqref{eq red Roe cocpt},
%$$C^*(\Pen(Z,i))^G\cong C^*(G)\otimes\mathcal{K}(H_i)$$
%for some separable Hilbert space $H_i$. For $j>i$, we have natural inclusions $H_i\hookrightarrow H_j$ and $\mathcal{K}(H_i)\hookrightarrow\mathcal{K}(H_j)$. 
%
Let $\varphi_i$ be the 
map
\[
\varphi_i := \varphi_{\Pen(Z,i)}^{\Pen(Z,i+1)}\colon C^*(\Pen(Z,i))^G \to C^*(\Pen(Z,i+1))^G
\]
as in \eqref{eq phi incl Roe}.
%composition
%$$C^*(\Pen(Z,i))^G\cong C^*(G)\otimes\mathcal{K}(H_i)\hookrightarrow C^*(G)\otimes\mathcal{K}(H_{i+1})\cong C^*(\Pen(Z,i+1))^G.$$
Then 
$$\{C^*(\Pen(Z,i))^G,\varphi_i\}_{i\in\mathbb{N}}$$
is a directed system of $C^*$-algebras whose direct limit is $C^*(X)^G_{\loc}$. Hence
$$C^*(X)^G_{\loc}\cong C^*(G)\otimes\mathcal{K}(H),$$ 
where $H$ %=\lim_i H_i$ 
is the Hilbert space from the isomorphism $L^2(E)\otimes L^2(G)\cong L^2(G)\otimes H$ in Definition \ref{def adm mod}. % \Todo: is this the right reference?

Now let $\mathcal{E}|_{\Pen(Z,i)}$ be the restriction of the Hilbert module $\mathcal{E}_{C^*(G)}$ to ${\Pen(Z,i)}$. By Corollary \ref{cor:roecompactoperators}, for each $i$, we have an isomorphism $\mathcal{K}(\mathcal{E}|_{\Pen(Z,i)})\cong C^*({\Pen(Z,i)})^G.$ These maps fit into a commutative diagram
\[
\xymatrix{
	C^*({\Pen(Z,i)})^G\ar[rr]^{\cong}\ar@{^{(}->}[d]^{\varphi_i}&& \mathcal{K}(\mathcal{E}|_{\Pen(Z,i)})\ar@{^{(}->}[d]\\
	C^*({\Pen(Z,i+1)})^G\ar[rr]^{\cong}&& \mathcal{K}(\mathcal{E}|_{\Pen(Z,i+1)}).
}
\]
Finally, note that each element of $\mathcal{K}(\mathcal{E}_{C^*(G)})$ is a limit of finite-rank operators, hence $\mathcal{K}(\mathcal{E}_{C^*(G)})=\lim_i\mathcal{K}\left(\mathcal{E}|_{\Pen(Z,i)}\right)$.
\end{proof}
It follows that we have an isomorphism $$\mathcal{L}(\mathcal{E}_{C^*(G)})/\mathcal{K}(\mathcal{E}_{C^*(G)})\cong \mathcal{M}(C^*(X)^G_{\loc})/C^*(X)^G_{\loc}.$$
Applying Proposition \ref{prop:identifyingindices} to each of the $G$-cocompact spaces ${\Pen(Z,i)}$ and taking the direct limit, we have shown:
\begin{proposition}
\label{prop:equalpictures}
The following two index maps are equal:
$$\ind_G:K_1(\mathcal{L}/\mathcal{K}(\mathcal{E}_{C^*(G)}))\xrightarrow{\lim(\eta\,\circ\,U_*\circ\,\ind)} K_0(C^*(G))$$
and
$$\ind_G:K_1(\mathcal{M}/(C^*(X)^G_{\loc}))\xrightarrow{\lim(\phi\,\circ\,\ind)}K_0(C^*(G)).$$
Here $C^*(G)$ and $C^*(X)^G_{\loc}$ can be taken to be either the reduced or maximal version of the group $C^*$ and Roe algebras.

\end{proposition}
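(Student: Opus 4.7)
The plan is to assemble the result out of Proposition \ref{prop:identifyingindices} applied to each of the cocompact penumbras $\Pen(Z,i)$, and then pass to the direct limit in $i$. First I would record, as established in the preceding proposition, that the localised Roe algebra can be written as the direct limit
\[
C^*(X)^G_{\loc} = \varinjlim_i C^*(\Pen(Z,i))^G,
\]
while on the Hilbert module side the restrictions $\mathcal{E}|_{\Pen(Z,i)}$ satisfy $\mathcal{K}(\mathcal{E}_{C^*(G)}) = \varinjlim_i \mathcal{K}(\mathcal{E}|_{\Pen(Z,i)})$. The commutative square displayed in the proof of the preceding proposition shows that the isomorphisms $C^*(\Pen(Z,i))^G \cong \mathcal{K}(\mathcal{E}|_{\Pen(Z,i)})$, obtained from Corollary \ref{cor:roecompactoperators} at each level, intertwine the inclusions $\varphi_i$ on the Roe side with the natural inclusions of compact operators on the Hilbert module side.

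Next I would upgrade this intertwining to the level of multiplier algebras, their quotients, and the induced maps in $K$-theory. Naturality of the boundary map in $K$-theory under $*$-homomorphisms then guarantees that for each $i$ the diagram of Proposition \ref{prop:identifyingindices} on $\Pen(Z,i)$ maps to the corresponding diagram on $\Pen(Z,i+1)$. This gives a morphism of directed systems between the two collections of index diagrams, so that taking the direct limit on both sides produces a single commutative diagram in which the top row is the index map $\lim(\phi \circ \ind)$ on $K_1(\mathcal{M}/C^*(X)^G_{\loc})$, the bottom row is $\lim(\eta \circ U_* \circ \ind)$ on $K_1(\mathcal{L}/\mathcal{K}(\mathcal{E}_{C^*(G)}))$, and the vertical arrow is the isomorphism induced by $U^{-1}\circ W$ at the limit. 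Since $K$-theory commutes with direct limits of $C^*$-algebras, this is exactly the claimed equality.

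The main technical point to check is that the identifications $U$, $W$, $\eta$ and $\phi$ from the cocompact case indeed fit into a compatible inductive system as $i$ varies. This is a naturality statement, but it is a little subtle because the constructions of $W$ and of the standard-module isomorphism in Lemma \ref{lem:standardiso} involve auxiliary choices (a Hilbert basis, the isomorphism $\Psi$ of Definition \ref{def adm mod}, the cutoff function $\chi$). The cleanest way to handle this will be to argue that although the intertwining isomorphisms at each level may depend on such choices, the induced maps on $K$-theory do not, so that the squares commute after passing to $K_*$. Once that is verified, the proof reduces to the purely formal observation that both index maps are, by construction, the direct limits of the corresponding cocompact index maps identified in Proposition \ref{prop:identifyingindices}.
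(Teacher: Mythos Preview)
Your proposal is correct and follows exactly the approach the paper takes: apply Proposition~\ref{prop:identifyingindices} to each cocompact penumbra $\Pen(Z,i)$ and pass to the direct limit. In fact the paper's ``proof'' is the single sentence preceding the proposition, and your write-up is considerably more careful than the paper about the compatibility of the isomorphisms $U$, $W$, $\eta$, $\phi$ across levels---a point the paper simply takes for granted.
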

\subsection{$G$-Callias-type operators and Roe's localised index}

%Our goal is to relate two pictures of the index of equivariant Callias-type operators. The first approach was given in \cite{Guo18}, using the notion of invertibility at infinity adapted to the equivariant setting. The second approach use a generalisation of Roe's notion of localised index \cite{Roe16}, which we give below. We shall see that, once put into the context of admissible modules, these two indices of $G$-equivariant Callias-type operators coincide.

We now relate the reduced version of the equivariant Callias-type index defined in \cite{Guo18} to (the reduced version of) the localised coarse index.

Recall the setting of Subsection \ref{sec Callias}, where $D = D_{\Phi}$ is a Callias-type operator. The operator $F$ in \eqref{eq F} defines a class $[F] \in K_1(\calL(\cE)/\cK(\cE))$, whose image under the boundary map for the six-term exact sequence corresponding to the ideal $\calL(\cE) \subset \cK(\cE)$ is by definition  $\ind_G^C(D_{\Phi}) \in K_0(C^*_{\red}(G))$.
Consider the embedding
$$\mathcal{E}\hookrightarrow\mathcal{E}_{C^*_{\red}(G)},$$
defined on the dense subspace $C_c(E)$ by the map $j$ in \eqref{eq def j}.
The image of $\mathcal{E}$ is a complemented submodule of $\mathcal{E}_{C^*_{\red}(G)}$. Extend the operator $F$ to all of $\mathcal{E}_{C^*_{\red}(G)}$ by defining the extension to be the identity on the orthogonal complement. We  denote this extended operator by $F_1$.

%
%
%Let us first recall the set-up in \cite{Guo18}. Let $D+\Phi$ is a $G$-Callias-type operator acting on a bundle $E$ over $X$, as defined in \cite{Guo18}. In particular, assume that the endormophisms $\Phi^2$ and $D\Phi+\Phi D$ satisfy:
%$$||D\Phi+\Phi D||_x\rightarrow 0,\qquad ||\Phi^2||_x\rightarrow 1$$
%with norm taken at the point $x\in X$ in the limit $x\rightarrow\infty$ on $X$. This can be guaranteed by constructing $\Phi$ from projections in the Higson $G$-corona algebra as in \cite{Guo18}. We also know that there exists a $G$-invariant, $G$-cocompact subset $Z\subseteq M$ such that there exists $L>0$ for which
%$$D\Phi+\Phi D+\Phi^2\geq L$$
%uniformly outside $Z$.
%
%Let $\mathcal{E}$ denote the Hilbert $C^*_{\red}(G)$-module defined by completing the space of compactly supported smooth sections $C_c^\infty(E)$ with respect to a $C^*_{\red}(G)$-valued inner product as in \cite{Guo18}. One can find $f\in C_c(X)$ such that $(D+\Phi)^2+f$ is invertible in the sense of the Hilbert $C^*_{\red}(G)$-modules $\mathcal{E}^i$ defined in \cite{Guo18}. We can then form the normalized $G$-Callias-type operator
%$$F_1:=(D+\Phi)((D+\Phi)^2+f)^{-1/2},$$
%which is independent of the choice of the function $f$. It can be shown that the bounded adjointable operator $F_1$ is invertible modulo the compact operators $\mathcal{K}(\mathcal{E})$ and hence defines a class 
%$$[F_1]\in K_1\left(\frac{\mathcal{L}(\mathcal{E})}{\mathcal{K}(\mathcal{E})}\right).$$
%By the six-term exact sequence, $F_1$ has an index in $K_0(\mathcal{K}(\mathcal{E}))\cong K_0(C^*_{\red}(G)).$

The assumption \eqref{eq Callias est} on $\Phi$ and $\tilde D$ implies that there is a $G$-cocompact subset $Z \subset X$ such that $D_{\Phi}^2 \geq c^2$ outside $Z$, for some $c>0$. By replacing $D_{\Phi}$ by the operator $\frac{1}{c} D_{\Phi}$, which has the same index as $D_{\Phi}$, we may alternatively make the slightly more convenient assumption that $D_{\Phi}^2 \geq 1$ outside $Z$.
As in Definition \ref{def loc index ell}, the localised equivariant coarse index of $D_{\Phi}$ is
\beq{eq loc index}
\ind_G(D_{\Phi}) = \ind_G([b(D_{\Phi})] \oplus 1) \quad \in K_0(C^*_{\red}(G)),
\eeq
for an odd, continuous function $b$ on $\mathbb{R}$ with
$$\textnormal{supp}(b^2-1)\subseteq [{-1},1],$$
We now make a specific choice for the function $b$:
\[
b(x) = \left\{\begin{array}{ll}
-1 & \text{if $x\in(-\infty,-1]$};\\
x &  \text{if $x\in(-1,1)$};\\
1 &  \text{if $x\in [1,\infty)$}.
\end{array}\right.
\]
%
%Next, recall that Roe \cite{Roe16} proved that a Dirac operator on a manifold with uniformly positive scalar curvature outside of a compact subset of the manifold has a \textit{localized index}. Roe's argument can be adapted to the equivariant setting to show that a $G$-Callias-type operator has a localised index in $K_0(C^*_{G,\textnormal{max}}(X;Z),L^2(E))$.
%
%More precisely, let $b$ be an odd, continuous function on $\mathbb{R}$ with
%$$\textnormal{supp}(b^2-1)\subseteq [-L,L],$$
%for some $L>0$. The operator $b(D+\Phi)$ is Fredholm by a finite propagation speed argument. Further, the index of $b(D+\Phi)$ is independent of the choice of $b$. Thus, for our purposes, we may fix $b$ to be the following function:
%\[
%b(x) = \left\{\begin{array}{lr}
%1 & \text{ } x\in [L,\infty),\\
%x/L & \text{ } x\in(-L,L),\\
%-1 & \text{ } x\in(-\infty,-L].
%\end{array}\right.
%\]
We will write  $F_0:=b(D_{\Phi}) \oplus 1$, for this function $b$. Then we have a class
\[
[F_0]\in K_1((\mathcal{M}/C^*(X)^G_{\loc})).
\]
The index \eqref{eq loc index} equals the image of $[F_0]$ under the relevant boundary map,
\[
\partial[F_0] \in K_0(C^*(X)^G_{\loc}).
\]
Here the localised equivariant Roe algebra $C^*(X)^G_{\loc}$ is realised on the admissible module $L^2(E) \otimes L^2(G)$.

%By the six-term exact sequence in $K$-theory associated to the short exact sequence
%\begin{align*}
%0\rightarrow C^*_{G,\textnormal{max}}(X;Z)&\rightarrow\mathcal{M}(C^*_{G,\textnormal{max}}(X;Z))\\
%&\qquad\rightarrow\mathcal{M}(C^*_{G,\textnormal{max}}(X;Z))/C^*_{G,\textnormal{max}}(X;Z)\rightarrow 0,
%\end{align*}
%$F_0$ has an index in $K_0(C^*_{G,\textnormal{max}}(X;Z,L^2(E)))$. 
%
%In order to compare the indices $F_1$ and $F_0$, both of which are the operator $D+\Phi$ normalised in different ways, we embed their domains into larger modules.
%
%First, embed $L^2(E)$ into $L^2(E)\otimes L^2(G)$ using the map $s\mapsto(h\mapsto hcs)$, as in the previous subsection. Extend the operator $F_0$ to $L^2(E)\otimes L^2(G)$ using the map $\oplus 1$. Let us still denote the new operator by $F_0$. This new operator then has an index
%$$\ind_G [F_0]\in K_0(C^*_{G,\textnormal{max}}(X;Z)),$$ 
%where $C^*_{G,\textnormal{max}}(X;Z)$ is the equivariant Roe algebra localised near $Z$, realised on the admissible module $L^2(E)\otimes L^2(G)$.

The operators $F_0$ and $F_1$ define elements of $K_1(\mathcal{M}/C^*(X)^G_{\loc})$ and $K_1(\mathcal{L}/\mathcal{K}(\mathcal{E}_{C^*_{\red}(G)}))$ respectively. By Proposition \ref{prop:equalpictures}, their indices in $K_0(C^*_{\red}(G))$ can be viewed equivalently through either of these pictures, and they equal the two sides of \eqref{eq thm Callias}. 
%Thus we shall simply denote their indices by $\ind_G[F_0]$ and $\ind_G[F_1]\in K_0(C^*_{\red}(G))$. 
To prove Theorem \ref{thm Callias}, it therefore suffices to prove the following equality.
\begin{proposition} \label{prop F0 F1}
We have
\[
\ind_G
[F_0]= \ind_G[F_1] \quad \in K_0(C^*_{\red}(G))
%(\mathcal{M}/C^*(X)^G_{\loc}) = K_1(\mathcal{L}/\mathcal{K}(\mathcal{E}_{C^*_{\red}(G)})).
\]
\end{proposition}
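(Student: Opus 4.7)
My approach is to show that $[F_0] = [F_1]$ in the common $K_1$-group
\[
K_1\bigl(\mathcal{L}(\mathcal{E}_{C^*_{\red}(G)})/\mathcal{K}(\mathcal{E}_{C^*_{\red}(G)})\bigr) \cong K_1(\mathcal{M}/C^*(X)^G_{\loc})
\]
identified by Proposition \ref{prop:equalpictures}; the equality $\ind_G[F_0] = \ind_G[F_1]$ then follows from naturality of the boundary map. Via Corollary \ref{cor:roecompactoperators}, the operator $F_0 = b(D_\Phi) \oplus 1$, originally acting on $L^2(E) \otimes L^2(G)$, corresponds to an element of $\mathcal{L}(\mathcal{E}_{C^*_{\red}(G)})$, so both operators live in a common $C^*$-algebra.

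The key geometric observation is that $F_0$ and $F_1$ are two different normalisations of $D_\Phi$, both of which act as the identity away from the cocompact set $Z \cup \supp f \subset X$. The plan is to build a norm-continuous path $\{F_t\}_{t \in [0,1]} \subset \mathcal{L}(\mathcal{E}_{C^*_{\red}(G)})$ with endpoints $F_0, F_1$ and with $F_t^2 - 1 \in \mathcal{K}(\mathcal{E}_{C^*_{\red}(G)})$ throughout. A first stage interpolates through odd continuous normalising functions $\beta_t(D_\Phi) \oplus 1$ with $\beta_t^2 - 1$ compactly supported, connecting $F_0 = b(D_\Phi) \oplus 1$ to a convenient reference operator $\beta_{1/2}(D_\Phi) \oplus 1$. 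A second stage uses the linear deformation of the regulariser,
\[
F_t := D_\Phi\bigl(D_\Phi^2 + (2 - 2t) + (2t-1) f\bigr)^{-1/2} \oplus 1,
\]
connecting the functional calculus $D_\Phi(D_\Phi^2 + 1)^{-1/2} \oplus 1$ to $F_1 = D_\Phi(D_\Phi^2 + f)^{-1/2} \oplus 1$; this is well-defined since $D_\Phi^2 + s + (1-s)f$ is invertible on $\mathcal{E}$ for all $s \in [0,1]$ by the construction of $f$ in \cite{Guo18}.

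For each $F_t$ in the path I would verify that $F_t^2 - 1$ is $G$-equivariant, locally compact, and supported near $Z \cup \supp f$, hence lies in $C^*(X)^G_{\loc}$: this uses the Callias positivity \eqref{eq Callias est} outside $Z$ together with the cocompactness of $\supp f$, as in Proposition \ref{prop Roe}. Norm-continuity on each stage follows from the standard functional-calculus estimate $\|\beta_t(D_\Phi) - \beta_s(D_\Phi)\| \leq \|\beta_t - \beta_s\|_\infty$ on stage one, and from continuity of the resolvent in $s$ on stage two. Homotopy invariance of $K_1$ then yields $[F_0] = [F_1]$ in the quotient.

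The main obstacle I anticipate is bridging the two stages of the homotopy, i.e., connecting $\beta_{1/2}(D_\Phi) \oplus 1$ at the end of stage one with $D_\Phi(D_\Phi^2 + 1)^{-1/2} \oplus 1$ at the start of stage two while keeping $F_t^2 - 1$ supported near $Z$. The difficulty is that the natural choice $\beta_{1/2}(x) = x(x^2+1)^{-1/2}$ has $\beta_{1/2}^2 - 1 = -1/(x^2+1)$, which is not compactly supported in $x$, so the resulting $F_{1/2}^2 - 1$ is not a priori in $C^*(X)^G_{\loc}$. Resolving this requires a more delicate choice of bridging normalising functions whose spectral tails at infinity are arranged to match the resolvent expansion of $D_\Phi(D_\Phi^2 + 1)^{-1/2}$ modulo operators with cocompact spatial support. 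A secondary subtlety is the compatibility of Hilbert space and Hilbert module functional calculi for the regular self-adjoint operator $D_\Phi$ on $\mathcal{E}$ versus $L^2(E)$; this is standard but must be made explicit so that the two stages are literally equal at $t = 1/2$ under the identification from Corollary \ref{cor:roecompactoperators}.
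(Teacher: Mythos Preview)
Your strategy of building a norm-continuous path of operators invertible modulo $\mathcal{K}(\mathcal{E}_{C^*_{\red}(G)})$ is the right one, and your stage one works. The genuine gap is in stage two, and it is more serious than the bridging issue you flag at $t=\tfrac12$. For $t\in[\tfrac12,1)$ you set
\[
F_t = D_\Phi\bigl(D_\Phi^2 + g_t\bigr)^{-1/2},\qquad g_t=(2-2t)+(2t-1)f,
\]
so that $F_t^2-1=-g_t(D_\Phi^2+g_t)^{-1}$. Since $g_t\ge 2-2t>0$ everywhere for $t<1$, the function $g_t$ is \emph{not} cocompactly supported, and $g_t(D_\Phi^2+g_t)^{-1}$ is not in $\mathcal{K}(\mathcal{E})$ (equivalently, not in $C^*(X)^G_{\loc}$) on a non-cocompact $X$. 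Thus $F_t$ fails to be invertible modulo compacts for any $t<1$, and the path leaves the invertibles of the Calkin algebra. The issue you identified at $t=\tfrac12$ is not an isolated bridging problem; it is the entire second stage.

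The paper's remedy is to abandon the attempt to stay within normalisations of $D_\Phi$ alone and instead \emph{reintroduce the Callias potential} along the path. It uses the family $\tilde F_s = b_{s/2}(D_\Phi) + \zeta(s)\,\psi_{s/2}(D_\Phi)\,\Phi$ with $b_s(x)=x(|x|^{1/s}+1)^{-s}$, $\psi_s(x)=(|x|^{1/s}+1)^{-s}$, and $\zeta(s)\to 0$ fast enough as $s\downarrow 0$. Invertibility modulo $\mathcal{K}(\mathcal{E})$ is \emph{not} checked via $\tilde F_s^2-1$; instead one writes
\[
\tilde F_s \;=\; \frac{D_\Phi+\zeta(s)\Phi}{\sqrt{(D_\Phi+\zeta(s)\Phi)^2+f_s}}\cdot \Bigl(\sqrt{(D_\Phi+\zeta(s)\Phi)^2+f_s}\;\psi_{s/2}(D_\Phi)\Bigr),
\]
where the first factor is the normalised Callias operator for $D_\Phi+\zeta(s)\Phi$ (Fredholm by the main result of \cite{Guo18}) and the second factor is invertible. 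As $s\downarrow 0$ the extra $\Phi$-term vanishes in norm and $b_{s/2}\to b$ uniformly, so $\tilde F_s\oplus 1\to F_0$; at $s=1$ one reads off the index of $F_1$. The missing idea in your proposal is precisely this $\Phi$-correction and the factorisation through a Callias-type operator, which is what keeps the whole path inside the invertibles of $\mathcal{L}(\mathcal{E})/\mathcal{K}(\mathcal{E})$.
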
 

\subsection{Proof of Theorem \ref{thm Callias}}

We now prove Proposition \ref{prop F0 F1}, and hence Theorem \ref{thm Callias}.

For $s > 0$, define the functions $b_s\in C_b(\mathbb{R})$ and $\psi_s \in C_0(\R)$ by
\[
\begin{split}
b_s(x)&=\frac{x}{(|x|^{1/s}+1)^{s}};\\
\psi_s(x)&=\frac{1}{(|x|^{1/s}+1)^{s}},
\end{split}
\]
for $x \in \R$. Then
\beq{eq approx bs b}
\lim_{s \downarrow 0} \|b_s - b\|_{\infty} = 0.
\eeq

Let $\zeta\colon (0,1]\to (0,1]$ be a continuous function such that $\zeta(1) =1$ and 
\beq{eq zeta s to zero}
\lim_{s\downarrow {0}}\zeta(s) \| \psi_{s/2}(D_{\Phi})\| = 0.
\eeq
For $s \in (0,1]$, consider the operator
\[
\tilde F_s := b_{s/2}(D_{\Phi}) + \zeta(s) \psi_{s/2}(D_{\Phi})\Phi
\]
on $\cE$.
For $s \in (0,1]$, the operator  $D_{\Phi}+\zeta(s)\Phi$ is of $G$-Callias type. Hence there is a 
 continuous, $G$-invariant, cocompactly supported function $f_s$ on $M$ such that
 \[
 \frac{D_{\Phi}+\zeta(s)\Phi}{\sqrt{(D_{\Phi}+\zeta(s)\Phi)^2+f_s}}
 \]
is invertible modulo $\cK(\cE)$. Since the operator $\sqrt{(D_{\Phi}+\zeta(s)\Phi)^2+f_s} \psi_{s/2}(D_{\Phi})$ is invertible, the operator
\[
\tilde F_s = \left(  \frac{D_{\Phi}+\zeta(s)\Phi}{\sqrt{(D_{\Phi}+\zeta(s)\Phi)^2+f_s}} \right)\sqrt{(D_{\Phi}+\zeta(s)\Phi)^2+f_s} \psi_{s/2}(D_{\Phi})
\]
is invertible modulo $\cK(\cE)$ as well. 

We have 
\[
 \tilde F_1 = \frac{D_{\Phi} + \Phi}{ \sqrt{(D_{\Phi}+\Phi)^2+f_1}} \frac{\sqrt{(D_{\Phi}+\Phi)^2+f_1}}{ \sqrt{(D_{\Phi}+\Phi)^2+1}}. 
\]
Hence $\tilde F_1$ has the same index as
\[
\frac{D_{\Phi} + \Phi}{  \sqrt{(D_{\Phi}+\Phi)^2+f_1}},
\]
which equals the index of $D_{\Phi}$.

Finally, \eqref{eq approx bs b} and \eqref{eq zeta s to zero} imply that
\[
\lim_{s\downarrow 0} \|\tilde F_s \oplus 1 - F_0\| = 0.
\]
So $s \mapsto \tilde F_s$ is a continous path of operators that are invertible modulo $\cK(\cE)$ connecting $F_0$ to the operator $\tilde F_1$, which has the same index in $K_0(C^*_{\red}(G))$ as $F_1$. This implies Proposition \ref{prop F0 F1}.

\section{Positive scalar curvature and the localised Baum--Connes conjecture} \label{sec appl}

\subsection{Positive scalar curvature}

\begin{proof}[Proof of Proposition \ref{prop psc}]
In the setting of the proposition, the operator $D$ has a well-defined localised index
\[
\ind_G^{Z}(D) \in K_*(C^*(M; Z)^G).
\]
 Then $\ind_G(D) \in K_*(C^*(M)^G)$ is the image of $\ind_G^{Z}(D)$ under the map
\[
K_*(C^*(Z)^G) = K_*(C^*(M; Z)^G) \to K_*(C^*(M)^G),
\]
and hence equal to zero.
\end{proof}

To prove Theorem \ref{thm:induction}, we use the following equivariant version of a theorem of Vilms \cite{Vilms} that was proved in \cite{GMW}.
	\begin{theorem}
		\label{thm:eq.Vilms}
		Let $\pi\colon M\to B$ be a fibre bundle with fibre $N$ and structure group $K$. 
		Suppose that $M$ and $B$ both have bounded geometry and proper, isometric $G$-actions making $\pi$ $G$-equivariant.
		Let $g_N$ be a $K$-invariant Riemannian metric on $N$. Then there is a $G$-invariant Riemannian metric $g_M$ on $M$ such that $\pi$ is 
		a $G$-equivariant Riemannian submersion with totally geodesic fibres.
	\end{theorem}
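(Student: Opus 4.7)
The plan is to adapt the classical proof of Vilms' theorem to the equivariant setting, ensuring every ingredient is $G$-invariant. Write $M = P \times_K N$, where $P \to B$ is the associated principal $K$-bundle. Because the $G$-action on $M$ covers the $G$-action on $B$ and preserves the bundle structure, it lifts canonically to an action on $P$ that commutes with the principal right $K$-action.

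The construction has three main ingredients. First, fix a $G$-invariant Riemannian metric $g_B$ on $B$ of bounded geometry; this exists by a standard averaging argument exploiting the proper $G$-action together with a $G$-invariant cutoff. Second, construct a principal connection $\theta \in \Omega^1(P, \kk)$ that is both $G$-invariant and has uniformly bounded $C^\infty$-norm for itself and its curvature. Since principal connections form an affine space over $\Omega^1(B, \mathrm{ad}\, P)$, one may start with any bounded-geometry connection and $G$-average the difference one-form, preserving the principal-connection property and both invariance conditions. Third, let $\overline{\mathcal H} \subset TM$ denote the horizontal distribution on $M = P \times_K N$ induced by $\theta$, and define $g_M$ by declaring $\overline{\mathcal H} \perp \ker d\pi$, setting $g_M|_{\overline{\mathcal H}} := \pi^* g_B$, and letting $g_M$ on $\ker d\pi$ be the fibrewise restriction of $g_N$. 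The last piece descends to the associated bundle precisely because $g_N$ is $K$-invariant.

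Three properties then need verification. $G$-invariance of $g_M$ is immediate, since each piece in its definition is $G$-invariant. That $\pi\colon (M, g_M) \to (B, g_B)$ is a Riemannian submersion is built into the construction. For totally geodesic fibres, invoke Vilms' original criterion: it suffices that parallel transport along horizontal lifts of paths in $B$ be an isometry between fibres. In an associated bundle, parallel transport on $M$ is induced by horizontal lifting in $P$ followed by the $K$-action on $N$, and $g_N$ is $K$-invariant by hypothesis, so parallel transports between fibres are isometries.

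The main obstacle is maintaining bounded geometry throughout: the Riemann curvature and higher derivatives of $g_M$ depend in a controlled but non-trivial way on the $C^\infty$-norms of $\theta$ and its curvature, so the $G$-averaging used to produce $\theta$ must be carried out with uniformly controlled cutoff functions. This can be arranged using a $G$-invariant partition of unity subordinate to a bounded-geometry slice cover of $B$, or by Haar integration against a compactly supported cutoff whose $G$-translates form a uniformly locally finite cover, in the same spirit as the slice-based constructions used in the proof of Theorem \ref{thm exist adm}.
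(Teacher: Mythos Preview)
The paper does not actually prove Theorem~\ref{thm:eq.Vilms}. It is stated without proof and attributed to the reference \cite{GMW}: ``we use the following equivariant version of a theorem of Vilms \cite{Vilms} that was proved in \cite{GMW}.'' So there is no proof in this paper to compare your proposal against.

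As a stand-alone sketch, your outline follows the natural adaptation of Vilms' argument to the $G$-equivariant setting and is broadly reasonable. A few points would need more care in a full write-up. First, the claim that the $G$-action on $M$ ``lifts canonically'' to the principal bundle $P$ is not automatic; you need to say why the bundle automorphisms induced by $G$ on $M$ come from principal-bundle automorphisms of $P$ (this uses that $G$ preserves the fibre-bundle structure, but the lift is only canonical up to the centre of $K$ in general). Second, the theorem as stated does not assert that $g_M$ has bounded geometry, so your last paragraph may be addressing more than is required; on the other hand, the hypothesis that $M$ and $B$ have bounded geometry is presumably there precisely to control the averaging step, so some comment is warranted. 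Third, in the averaging of the connection you should check that the resulting one-form is still a principal connection (i.e., satisfies the correct equivariance and reproduces the fundamental vector fields), not merely a horizontal distribution; this is standard but worth stating. None of these are fatal, but they are the places a referee would push back.
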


	\begin{proof}[Proof of Theorem \ref{thm:induction}] Let $\kappa_{G/K}$ denote the scalar curvature of the  $G$-invariant  Riemannian metric $g_{G/K}$ on the base of the fibre bundle $M \to G/K$. Note that since $G/K$ is a homogeneous space, $\kappa_{G/K}$ is a finite constant. Let $H\subseteq TM$ be an Ehresmann connection. Then as in the proof of Theorem \ref{thm:eq.Vilms} above, we may lift $g_{G/K}$ to a $G$-invariant metric $g_H$ on $H$, as well as lift the $K$-invariant Riemannian metric $g_N$ on $N$ to a metric on the vertical subbundle $V\subseteq TM$. Define a $G$-invariant metric on $M$ by $g_M := g_H\oplus g_V$. 
		
		Since $N$ has uniformly positive scalar curvature $\kappa_N$, it satisfies $\inf\{\kappa_N\}=:\kappa_0>0$.
		Now let $T$ and $A$ denote the O'Neill tensors of the submersion $\pi$ (their definitions can be found in \cite{ONeill66}). By Theorem \ref{thm:eq.Vilms} above, the fibres of $M$ are totally geodesic, so $T=0$. Pick an orthonormal basis of horizontal vector fields $\{X_i\}$.
		% By $G$-invariance, we have that for any $p\in M$,
		%$$\sum_{i,j} ||A_{X_i}(X_j)||_p = \sum_{i,j} ||A_{X_i}(X_j)||_{gp}$$
		%for all group elements $g\in G$. This implies that $\sum_{i,j} ||A_{X_i}(X_j)||_p = A(\tau(p))$ 
		%is a function on $M/G$, where $\tau:M\to M/G$ is the projection. Since $M$ has bounded geometry,
		%it follows that $\sup_{p\in M} A(\tau(p)) =A_0<\infty$.
		A result of Kramer (\cite{Kramer00},
		p.\ 596),  relates the scalar curvatures by
		$$
		\kappa_M(p) = \kappa_{G/K} + \kappa_N(p) - \sum_{i,j} ||A_{X_i}(X_j)||_p.
		$$
		Since both $M$ and $N$ have bounded geometry, it follows that their scalar curvatures $\kappa_M$ and $ \kappa_N$ are uniformly bounded.
		Therefore 		
\[
\sup_{p\in M}\sum_{i,j} ||A_{X_i}(X_j)||_p \le A_0<\infty
\]
 for some positive constant $A_0$.
		Upon scaling the fibre metric on $N$ by a positive factor $t$, we obtain
		$$
		\kappa_M(p) \ge \kappa_{G/K} + t^{-2}\kappa_0 - A_0 >0 \qquad \text{whenever}\quad 0<t< \sqrt{\frac{\kappa_0}
			{-\kappa_{G/K} + A_0}},
		$$
		where we choose $A_0>0$ large enough such that $-\kappa_{G/K} + A_0>0$.
		Thus $g_M$ is a $G$-invariant metric of uniform positive scalar curvature on $M$.
	\end{proof}

\subsection{The localised Baum--Connes conjecture}

%Let $Z\subset X$ be closed and $G$-invariant. Let $H_X$ be an admissible equivariant $C_0(X)$-module.
%The restriction map $C_0(X)\to C_0(Z)$ allows us to view $H_X$ a  as a $C_0(Z)$-module, which will be degenerate (see Definition \ref{def C0X module}) in general. Let $H_Z=\one_Z H_X$, an admissible $C_0(Z)$-module. Write $H_{X\setminus Z}$ for the orthogonal complement of $H_Z$ in $H_X$. The map
%\[
%j\colon \cB(H_Z) \to \cB(H_X),
%\]
%defined by extending operators by zero on $H_{X\setminus Z}$, restricts to a $*$-homomorphism
%\[
%j\colon D^*_{\red}(Z)^G\to D^*_{\red}(X)^G
%\]

Let $Z\subset X$ be a closed, $G$-invariant, cocompact subset. Let $H_X$ be an admissible equivariant $C_0(X)$-module.
The map $\varphi_Z^X$ in \eqref{eq phi incl B} restricts to a map
\[
\varphi_Z^X\colon D^*_{\red}(Z)^G\to D^*_{\red}(X)^G
\]
that maps $C^*(Z)^G$ into $C^*(X; Z)^G$.  Hence we obtain
\beq{eq jstar}
\varphi_Z^X\colon D^*_{\red}(Z)^G/C^*(Z)^G \to D^*_{\red}(X)^G/C^*(X; Z)^G.
\eeq
By Paschke duality, the analytic $K$-homology of $Z$ equals
\[
K_*^G(Z) = K_{*+1}(D^*_{\red}(Z)^G/C^*(Z)^G).
\]
So \eqref{eq jstar} induces
\beq{eq j star}
(\varphi_Z^X)_*\colon K^G_*(Z)\to K^G_*(X)_{\loc}.
\eeq

Using the maps \eqref{eq j star}, we obtain
\beq{eq jstar glob}
\varphi^X_*\colon RK_*^G(X) \to K^G_*(X)_{\loc}.
\eeq
\begin{lemma}
The following diagram commutes, where $\mu^X_G$ denotes the analytic assembly map for $X$:
\[
\xymatrix{
RK_*^G(X)\ar[r]^-{\mu^X_G} \ar[d]_-{\varphi^X_*}& K_*(C_{\red}^*G). \\
K^G_*(X)_{\loc}\ar[ur]_-{\ind^{\loc}_G}
}
\]
\end{lemma}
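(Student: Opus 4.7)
The plan is to reduce the statement to $G$-cocompact subsets of $X$ and then invoke Corollary \ref{cor loc index ass map}. By definition, $RK^G_*(X) = \varinjlim_Z K^G_*(Z)$, where $Z$ ranges over closed $G$-invariant cocompact subsets of $X$, and by construction $\varphi^X_* = \varinjlim_Z (\varphi_Z^X)_*$ for the maps of \eqref{eq j star}; the analytic assembly map $\mu_G^X$ is the direct limit of the cocompact assembly maps $\mu_G^Z$. Thus commutativity of the displayed triangle reduces to commutativity, for each fixed $G$-cocompact $Z$, of
\[
\xymatrix{
K^G_*(Z) \ar[r]^-{\mu_G^Z} \ar[d]_-{(\varphi_Z^X)_*} & K_*(C^*_{\red}(G)). \\
K^G_*(X)_{\loc} \ar[ur]_-{\ind^{\loc}_G}
}
\]

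For such $Z$, Corollary \ref{cor loc index ass map} identifies $\mu_G^Z$ with the equivariant coarse index on $Z$; via Paschke duality this is the boundary map
\[
\partial_Z\colon K_{*+1}(D^*_{\red}(Z)^G/C^*(Z)^G) \to K_*(C^*(Z)^G) \cong K_*(C^*_{\red}(G)),
\]
where the final isomorphism comes from \eqref{eq red Roe cocpt}. Similarly, $\ind^{\loc}_G$ is by definition the analogous boundary map $\partial_X$ for the pair $(D^*_{\red}(X)^G,\, C^*(X; Z, H_X)^G)$, with target identified with $K_*(C^*_{\red}(G))$ via \eqref{eq loc Roe group Cstar}.

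The key input is naturality of the boundary map. The inclusion $\varphi_Z^X$ of \eqref{eq phi incl B} carries $C^*(Z)^G$ into $C^*(X; Z, H_X)^G$ and $D^*_{\red}(Z)^G$ into $D^*_{\red}(X)^G$, yielding a morphism of short exact sequences of $C^*$-algebras. Naturality of the associated six-term exact sequence gives
\[
\partial_X \circ (\varphi_Z^X)_* = (\varphi_Z^X)_* \circ \partial_Z,
\]
which is exactly the commutativity we want, provided the induced map $(\varphi_Z^X)_*\colon K_*(C^*(Z)^G) \to K_*(C^*(X; Z, H_X)^G)$ is the identity on $K_*(C^*_{\red}(G))$ under \eqref{eq red Roe cocpt} and \eqref{eq loc Roe group Cstar}.

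The main obstacle is this last identification, which is precisely the statement recorded below Proposition \ref{prop psc}: for cocompact $Z$, the inclusion $C^*(Z)^G \to C^*(X; Z, H_X)^G$ induces the identity on $K$-theory. Concretely, restricting the admissible module $H_X$ to $H_Z = \one_Z H_X$ yields compatible isomorphisms of both algebras with $C^*_{\red}(G) \otimes \cK$ under which $\varphi_Z^X$ corresponds to the inclusion of a full corner; passing to the direct limit over $Z$ then produces commutativity of the original triangle.
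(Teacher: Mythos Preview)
Your proof is correct and follows essentially the same approach as the paper: reduce to cocompact $Z$ by taking direct limits, use naturality of the boundary map for the morphism of short exact sequences induced by $\varphi_Z^X$, and invoke Corollary~\ref{cor loc index ass map} to identify the top arrow with $\mu_G^Z$. The paper's proof is terser and does not spell out the compatibility of the identifications \eqref{eq red Roe cocpt} and \eqref{eq loc Roe group Cstar} with $(\varphi_Z^X)_*$; you make this point explicit and correctly locate it in the remark below Proposition~\ref{prop psc}.
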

\begin{proof}
If $Z\subset X$ is $G$-cocompact, then naturality of boundary maps with respect to $*$-homomorphisms implies that the diagram
\[
\xymatrix{
K_*^G(Z)\ar[r]^-{\ind_G} \ar[d]_-{(\varphi_Z^X)_*}& K_*(C_{\red}^*G). \\
K^G_*(X)_{\loc} \ar[ur]_-{\ind^{\loc}_G}
}
\]
commutes.  By Corollary \ref{cor loc index ass map}, the top horizontal arrow equals $\mu^Z_G$, so the claim follows after we take direct limits. 
\end{proof}
This lemma directly implies Proposition \ref{prop LBCC}.

\begin{remark}
\label{rem BC inj surj}
The arguments in this subsection have two more consequences.
\begin{enumerate}
\item If the map \eqref{eq jstar glob} is injective, then injectivity of the Baum--Connes assembly map implies injectivity of the map in Conjecture \ref{conj LBCC}.
\item If the map \eqref{eq jstar glob} is surjective, then Conjecture \ref{conj LBCC} implies surjectivity
 of the Baum--Connes assembly map.
\end{enumerate}
\end{remark}

\bibliographystyle{plain}

\bibliography{mybib}

\end{document}